\documentclass[11pt,reqno]{amsart}
\usepackage[margin=1.25in, footskip=.5in]{geometry}
\usepackage[T1]{fontenc}
\usepackage[american]{babel}
\usepackage{amsmath,amsfonts,amssymb,amsthm,bm,wasysym}
\usepackage{mathrsfs,graphicx}
\usepackage{enumitem}
\usepackage{multirow}
\usepackage{caption,float}
\usepackage{subcaption}
\usepackage{tikz}
\usetikzlibrary{matrix,arrows,backgrounds,calc,chains,automata,positioning,patterns}
\usetikzlibrary{decorations,decorations.pathmorphing}
\usetikzlibrary{shapes.geometric,calc}

\usepackage[colorlinks,linkcolor=blue!50!black,citecolor=blue!50!black,pagebackref,hypertexnames=false, breaklinks]{hyperref}

\allowdisplaybreaks%to break long aling with newpage
\newtheorem{theorem}{Theorem}%[section]
\newtheorem{proposition}[theorem]{Proposition}
\newtheorem{corollary}[theorem]{Corollary}
\newtheorem{lemma}[theorem]{Lemma}
\theoremstyle{definition}
\newtheorem{definition}{Definition}
\newtheorem*{def*}{Definition}

\newtheorem{remark}{Remark}

\newtheorem*{thm*}{Theorem}
\newtheorem*{lem*}{Lemma}
\newtheorem*{prop*}{Proposition}
\newtheorem*{rem*}{Remark}

\numberwithin{equation}{section}
\def\R{{\mathbb R}}%Zahlkoerper
\def\Q{{\mathbb Q}}
\def\E{{\mathbb E}}
\def\P{{\mathbb P}}
\def\eps{{\varepsilon}}

\title[LQG on fractals]{Towards Liouville Quantum Gravity on fractals}
\date{Last update: \today}
\author{Patricia Alonso Ruiz}
\address{Institut f\"ur Mathematik \\ Friedrich-Schiller Universit\"at Jena}
\email{patricia.alonso.ruiz@uni-jena.de}
\author{Eva Kopfer}
\address{Institut f\"ur Angewandte Mathematik \\ Rheinische Friedrich-Wilhelms-Universit\"at Bonn}
\email{eva.kopfer@iam.uni-bonn.de}
\subjclass[2010]{28A80,60G15}
\keywords{Sierpinski gasket, Gaussian fields, Gaussian multiplicative chaos, Liouville quantum gravity measure, Liouville Brownian motion}
\thanks{Research supported by an Oberwolfach Leibniz Fellowship.}
\begin{document}

\begin{abstract}
    Motivated by the recent construction of fractional Gaussian fields on the Sierpinski gasket, we study those Gaussian fields whose covariance function exhibits a logarithmic behavior. We introduce a parametric family of random measures associated with these fields, which can be regarded as the analogue to Liouville quantum gravity. The construction is based on Kahane's approach via Gaussian multiplicate chaos. We establish reflection invariance and scaling self-similarity for both the fields and the measures, and in addition construct the corresponding analogue to Liouville Brownian motion.
\end{abstract}
\maketitle
\tableofcontents
\newpage

\section{Introduction}
In mathematical physics, the Liouville quantum gravity (LQG) measure plays a central role in the study of two-dimensional quantum gravity and the Knizhnik-Polyakov-Zamolodchikov (KPZ) formula, see e.g.~\cite{knizhnik1996fractal,DS11,berestycki2016kpz,duplantier2021liouville}. From a mathematical viewpoint, the LQG measure (e.g. in $\mathbb{R}^2$) may formally be expressed as 
\begin{equation*}
    e^{\gamma h(x)}dx,
\end{equation*}
where $h$ is a Gaussian free field (GFF). This expression only holds formally since the field cannot be defined pointwise due to the logarithmic behavior of its covariance function $\mathbb{E}[h(x)h(y)]$.

\medskip

One way to construct the LQG measure consists in applying the \emph{Gaussian multiplicative chaos} (GMC) method, which goes back to Kahane~\cite{Kah85}. There, a martingale approximation was employed, assuming that the covariance kernel $\mathbb{E}[h(x)h(y)]$ is decomposable into a sum of non-negative kernels. This condition is hard to check in practice, which lead to further investigations by Rhodes and Vargas in~\cite{RV14}, where they approximated the fields via convolution with appropriate mollifiers. This approach yields a random measure that is unique in law, but not as a function of the underlying field. 
A related construction using circle averages was proposed by Duplantier and Sheffield in~\cite{DS11} for the special case of a Gaussian free field in two dimensions. An alternative approach based on thick points of the underlying field can be found in~\cite{Ber17}. 

\medskip

To a certain extent and from a theoretical viewpoint, one may speak of an LQG measure in any metric measure space, in particular beyond the Euclidean setting, as soon as there is a meaningful concept of log-correlated GFF. Constructions of such fields have been investigated for the torus in~\cite{david2016liouville}, in the Riemannian setting in~\cite{dSHKS24}, and also explored in the abstract framework of Dirichlet form theory in~\cite{FO20}. 
In the present paper, we rely on a characterization of GMC through a transformation due to Shamov~\cite{Sha16}, which had already been pointed out in~\cite{ST94}. The recent construction of GFFs in the context of fractals, and specifically on the Sierpinski gasket in~\cite{BL22,BC23}, has sparkled the interest in investigating the associated LQG measure here.

\medskip

From a more geometric viewpoint, the LQG measure can heuristically be viewed as the random volume measure associated with the formal random metric $e^{\gamma h(x)}|dx|^2$ of Liouville quantum gravity. Garban-Rhodes-Vargas introduced in~\cite{GRV16} the \emph{Liouville Brownian motion} (LBM) in $\mathbb{R}^2$ as the natural diffusion associated with this random geometry, which was independently considered by Berestycki in~\cite{Ber17}. Subsequently, alternative constructions using Dirichlet forms have been proposed in~\cite{Shi19,FO20} and in~\cite{dSHKS24} in the context of Riemannian manifolds. In the present paper, we follow the general approach via Dirichlet form theory to define this process as a time change of the Brownian motion on the Sierpinski gasket.

\medskip

The paper is organized as follows: After introducing in Section~\ref{S:background} notations and basic geometric and analytic results concerning the Sierpinski gasket, Section~\ref{S:GFF_on_SG} deals with the construction and (invariance) properties of the random field which we interpret as the analogue of the planar GFF. That field is used in Section~\ref{S:LQGM} to construct the associated LQG measures by approximation, see Theorem~\ref{T: LQGM}. It is worth pointing out that these measures are indexed by a parameter $\gamma>0$ that lies in a certain, non-optimal, regime, discussed in Remark~\ref{R:gamma_range}. Finally, Section~\ref{S:LBM} is devoted to the construction of the LBM associated with a given LQG measure on the gasket, see Theorem~\ref{T:def_LBM_SG}. The result relies on the key property of the LQG measure being $\mathbb{P}$-a.s. smooth.

%%%%------------------------------------------------------
\section{Background and notation}\label{S:background}
\subsection{The Sierpinski gasket}\label{SS:SG_basics}
In this section we review notation and basic results concerning the Sierpinski gasket that are of relevance to the purpose of the paper. For further details we refer the reader e.g. to~\cite[Chapter 3]{Str06}.

\medskip

The standard Sierpinski gasket $K$, see Figure~\ref{F:SG}, is the unique compact subset of $\mathbb{R}^2$ that satisfies the fixed-point equation
\begin{equation*}%\label{E:SG_fixed_point}
	K=\bigcup_{i=1}^3F_i(K),
\end{equation*} 
where $F_i\colon\mathbb{R}^2\to\mathbb{R}^2$ are the mappings
\begin{equation*}
	F_i(p):=\frac{1}{2}(p-p_i)+p_i
\end{equation*}
and $V_0:=\{p_1,p_2,p_3\}$ denotes the set of vertices of an equilateral triangle of side length one. The set $V_0$ is also regarded as the natural boundary of $K$. %The self-similarity property~\eqref{E:SG_fixed_point} has its roots in the \emph{Banach fixed point theorem}, c.f.~\cite[Section 3.1]{Hut81}.
As a metric measure space, $K$ is equipped with the Euclidean metric and the standard self-similar Bernoulli measure $\mu$ that gives the same weight to each triangular cell of the same side-length. This measure thus satisfies
\begin{equation*}%\label{E:def_measure_SG}
	\mu(F_{w}(V_0))=\frac{1}{3^m}
\end{equation*}
for any $F_{w}:=F_{w_1}{\circ}\cdots \circ F_{w_m}$, where $w=w_1\ldots w_m\in\{1,2,3\}^m$, $m\geq 0$. The latter measure is comparable to the
normalized $d_H$-dimensional Hausdorff measure, where $d_H=\frac{\log 3}{\log 2}$ is the (Euclidean) Hausdorff dimension of $K$. For simplicity, and since $K$ is compact, we set both its diameter and measure to be $1$.

\begin{figure}[H]
	\includegraphics[scale=.15]{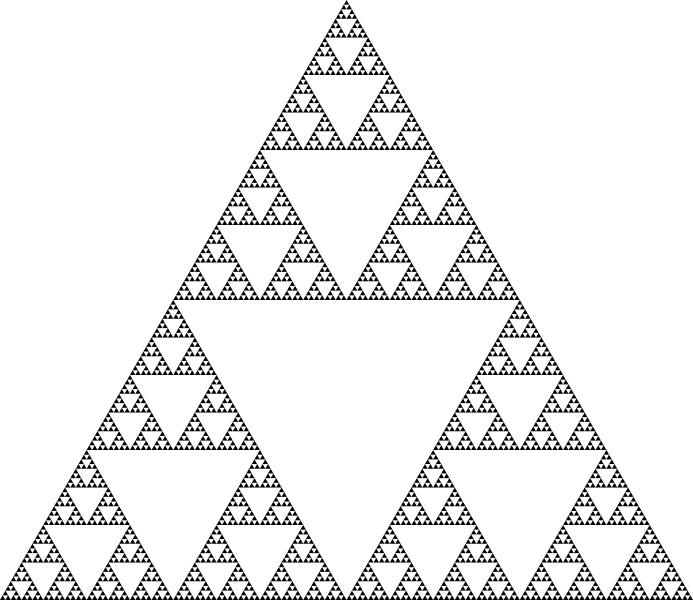}
	\caption{The standard Sierpinski gasket.}
	\label{F:SG}
\end{figure}

Regarding functions on $K$, we will denote by $L^2(K):=L^2(K,\mu)$ the space of square-integrable functions with respect to the measure $\mu$, and by $\langle u,v\rangle$ the associated standard $L^2$-inner product with norm $\|u\|^2:=\langle u,u\rangle$.

\medskip

The Dirichlet Laplacian on $K$ is the self-adjoint operator on $L^2(K)$ that is the infinitesimal generator of the local and regular Dirichlet form $(\mathcal{E},\mathcal{F})$ given by
\begin{equation}\label{E:def_DF_SG}
    \begin{cases}
	\mathcal E(u,v):=\lim\limits_{n\to\infty}\left(\frac 53\right)^n\sum\limits_{x,y\in V_n,x\sim y}(u(x)-u(y))(v(x)-v(y)),\\
    \mathcal{F}:=\{u\in C(K)\colon \mathcal{E}(u,u)<\infty\}.
    \end{cases}
\end{equation}
%defined on functions $u,v$ vanishing on the boundary $\partial K$.
Its associated heat semigroup $\{P_t\}_{t\geq 0}$
% , given by
% \begin{equation}\label{E:Semigroup}
% 	P_tu(x)=e^{t\Delta}u(x)=\int_Kp_t(x,y)u(y)\,d\mu(y),% =\sum_{\varphi}e^{\lambda_\varphi t}\langle u|\varphi\rangle_{L^2}.
% \end{equation}
has a heat kernel $p_t(x,y)$ that satisfies the sub-Gaussian estimates 
\begin{equation}\label{E:subG_HKE}
	p_t(x,y)
	\asymp c_1 t^{-d_S/2} \exp \bigg( - c_2\Big(\frac{d(x,y)^{d_W}}{t}\Big)^{\frac1{d_W-1}} \bigg)\tag{HKE}
\end{equation}
for $\mu$-a.e. $x,y\in K$ and $t\in (0,1)$. %$t\in (0,({\rm diam}\,K)^{d_W})$. 
Here, $d_S$ and $d_W$ denote respectively the so-called spectral and walk dimension, which in the case of the Sierpinski gasket are
 \begin{equation*}%\label{E:dimensions_SG}
 	d_W=\frac{\log 5}{\log 2}\qquad\text{and}\qquad d_S=\frac{\log 9}{\log 5}=\frac{2d_H}{d_W},
 \end{equation*}      
see e.g.~\cite{Kum93}. In addition, it is also know that the heat kernel satisfies the global estimate
\begin{equation}\label{E:HKE_large_t}
 	p_t(x,y)\leq c_3t^{-d_S/2}\exp(-t/2)
\end{equation}
for $\mu$-a.e. $x,y\in K$ and $t>1$. We refer the reader to~\cite{Kum93} and~\cite[Section 5]{Bar98} for further details.
%
%The heat kernel estimate~\eqref{E:subG_HKE} in particular implies that $P_t$ is ultracontractive, see e.g.~\cite[Theorem 2.1.5]{Dav90}, whence $-\Delta$ has a pure point spectrum with only accumulation point at infinity~\cite[Theorem 2.1.4]{Dav90}. 
%Note that all eigenvalues $\lambda$ are non-negative. %because $-\Delta$ is a non-negative operator. 

\medskip

The infinitesimal generator $\Delta$ associated with $\{P_t\}_{t\geq 0}$ is a non-positive operator with pure point spectrum that is regarded as the standard intrinsic Laplacian on $K$. 
%Let $0\leq \lambda_{1}\leq \lambda_{2}\leq\lambda_{3}\leq\ldots$ denote the ordered eigenvalues including multiplicity of $-\Delta$ with zero boundary condition. %the standard (Dirichlet) Laplace operator  on $K$.
The eigenvalues of $-\Delta$ can be described by means of a genealogy tree, where each distinct eigenvalue has a unique ``genealogical line''. %As reference to the first ancestor of an eigenvalue $\lambda$, one speaks of an $i$-series eigenvalue, where $i\in\{2,5,6\}$. 
%The lowest $5$-series and $6$-series Dirichlet eigenvalue have generation of birth $1$ respectively $2$, and we will denote them by $\lambda_0^{(5)}$ and $\lambda_1^{(6)}$. 

\medskip

We next mention some relevant features of the spectrum that will be used later on and refer the interested reader e.g. to~\cite[Section 3.3]{Str06} for further details. Below, $0\leq \lambda_{1}\leq \lambda_{2}\leq\lambda_{3}\leq\ldots$ denote the ordered eigenvalues including multiplicity, and $\lambda_1^{(6)}$ the sixth lowest Dirichlet eigenvalue.

\begin{proposition}\label{P:6series_props}
	For any $j\geq 1$, define the numbers
    \begin{equation}\label{E:def_6series_ev_mult}
        N_j:=\frac{1}{2}(3^{j+1}-3)\qquad\text{and}\qquad\Lambda_j:=5^{j-2}\lambda_1^{(6)}.
    \end{equation}
    Then, for any $j\geq 2$,
	\begin{enumerate}[wide=0em,itemsep=.5em,label={\rm(\roman*)}]
		%\item $\Lambda_j$ is the lowest 6-series eigenvalue with generation of birth $j$.
		\item $\lambda_{N_j}=\Lambda_j$, i.e. ${\rm mult}\,\Lambda_j=N_{j}$.
		\item $\#\{k\geq 1\colon \varphi_k\text{ eigenfunction of }-\Delta\text{ with eigenvalue }\lambda_k\leq \Lambda_j,\}=N_{j+1}$.
	\end{enumerate}
\end{proposition}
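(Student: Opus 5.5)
The plan is to derive both items from the spectral decimation description of the Dirichlet spectrum of $-\Delta$ (Fukushima--Shima), as presented in~\cite[Section 3.3]{Str06}, by counting eigenvalues at each finite level $V_m$.

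\emph{Step 1: setup and the count at level $m$.} Let $\Delta_m$ be the discrete Dirichlet Laplacian on the graph $V_m$, i.e.\ acting on functions vanishing on $V_0$. Its dimension is the number of interior vertices:
\[
\#V_m-3=\tfrac12(3^{m+1}+3)-3=\tfrac12(3^{m+1}-3)=N_m.
\]
Spectral decimation says the following. Every Dirichlet eigenfunction of $-\Delta$ restricts on $V_m$, for $m$ large, to an eigenfunction of $-\Delta_m$ with eigenvalue $\lambda_m$. The sequence $\lambda_m$ obeys $\lambda_m=\phi_\pm(\lambda_{m+1})$ with $\lambda_{m+1}=\frac12\bigl(5\mp\sqrt{25-4\lambda_m}\bigr)$, and the continuum eigenvalue is $\lambda=\frac32\lim_m 5^m\lambda_m$. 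Only finitely many $\phi_+$ choices are allowed. Eigenvalues are labelled by a generation of birth (the first level where the value is not forbidden, e.g.\ $2,5,6$) and by the subsequent word of $\phi_\pm$ choices. The multiplicities are explicit: a 6-series value born at level $m$ has multiplicity $\frac12(3^{m}+3)$ up to the precise index convention, and the 5-series multiplicities are $\frac12(3^{m-1}-1)$, and so on. The normalization $\Lambda_j=5^{j-2}\lambda_1^{(6)}$ reflects that the 6-series eigenvalue born at level $j$ with all further choices $\phi_-$ is $5^{j-2}$ times the one born at level $2$, since the tail of the sequence is the same map iterated.

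\emph{Step 2: ordering.} I would check that the eigenvalue sequences whose continuation uses only $\phi_-$ after level $m$ reproduce, after taking the limit, exactly the $N_m$ eigenvalues of $-\Delta_m$ with multiplicity. The key inputs are that $\phi_-$ is increasing and maps $[0,6]$ into $[0,\frac{5-\sqrt{1}}2]$, and that any eigenvalue using a $\phi_+$ at some level $>m$ is strictly larger than every eigenvalue with only $\phi_-$ from level $m$ on. Together these give
\[
\#\{k\colon \lambda_k\le \text{(largest all-}\phi_-\text{ value from level } m)\}=N_m .
\]
The largest discrete Dirichlet eigenvalue of $-\Delta_m$ is $6$, coming from the 6-series born at level $m$. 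Its all-$\phi_-$ limit is therefore the $N_m$-th continuum eigenvalue, and up to the index shift in the definition of $\Lambda_j$ this is $\Lambda_{j}$ for $m=j$. This proves (i) in the form $\lambda_{N_j}=\Lambda_j$.

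\emph{Step 3: the count in (ii).} For (ii) I would count the eigenvalues $\le\Lambda_j$ in the same way but at level $j+1$. The value $\Lambda_j$ viewed at level $j+1$ corresponds to the discrete value $\phi_-(6)=3$ followed by $\phi_-$'s. I would add the number of discrete eigenvalues of $-\Delta_{j+1}$ that are $\le 3$ to those equal to $3$ and to those generated by $\phi_+(6)$, using the explicit multiplicity table. The expected identity is
\[
N_j+\text{mult}+\dots=N_{j+1},
\]
to be checked by summing geometric series in $3^j$.

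\emph{Main obstacle.} The main difficulty is the bookkeeping: fixing the exact index convention relating the generation of birth to $j$, and checking that no eigenvalue from another series (2- or 5-series, or those involving $\phi_+$) falls strictly between the relevant values. I would first verify the claim for small levels $j=2,3$ by hand from the multiplicity table, then run the induction on $j$ using the monotonicity of $\phi_\pm$ and the multiplicity formulas.
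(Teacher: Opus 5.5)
The paper gives no argument of its own here; it defers to \cite[Section 3.3]{Str06}. Spectral decimation is the right tool, but your plan has two concrete errors and, more seriously, sets out to prove assertions that are false.

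\emph{Wrong multiplicity table.} The multiplicities you quote, $\frac12(3^m+3)$ for the 6-series and $\frac12(3^{m-1}-1)$ for the 5-series, are the \emph{Neumann} ones. In the Dirichlet case the 2-series occurs only at level $1$. A $5$ born at level $m$ has multiplicity $\frac12(3^{m-1}+3)$, and a $6$ born at level $m$ has multiplicity $\frac12(3^m-3)=N_{m-1}$. Only these reproduce the level counts $N_1=3$, $N_2=12,\dots$.

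\emph{Step 2 fails at the eigenvalue $6$.} Since $\phi_-(6)=2$ is forbidden, a $6$ born at level $m$ must continue as $\phi_+(6)=3$, and only afterwards by $\phi_-$. So only the $N_m-\frac12(3^m-3)=3^m$ non-$6$ eigenvalues of $-\Delta_m$, which lie in $(0,5]$, continue by $\phi_-$ forever, and they give exactly the $3^m$ smallest continuum eigenvalues. Let $\Phi(x):=\lim_{k\to\infty}5^k\phi_-^{\circ k}(x)$, which is increasing. These $3^m$ eigenvalues are then at most $\frac32 5^m\Phi(5)=\frac32 5^{m+1}\Phi\bigl(\tfrac{5-\sqrt5}{2}\bigr)$. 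This is strictly below $\frac32 5^{m+1}\Phi(3)$, the lowest 6-series eigenvalue born at level $m$, which is $\Lambda_m$. Every other eigenvalue is strictly larger than $\Lambda_m$: this covers any eigenvalue using a non-forced $\phi_+$ at some level $\ge m+1$, and any eigenvalue born at a level $\ge m+1$.

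Carrying this out correctly gives
\[
\#\{k:\lambda_k\le\Lambda_j\}=3^j+\tfrac12(3^j-3)=N_j \quad\text{and}\quad {\rm mult}\,\Lambda_j=N_{j-1}.
\]
So $\lambda_{N_j}=\Lambda_j$ is true. Equivalently, $\Lambda_{j-1}<\lambda_k\le\Lambda_j=5\Lambda_{j-1}$ for $N_{j-1}<k\le N_j$, which is what the later estimates in the paper rely on. But the clause ${\rm mult}\,\Lambda_j=N_j$ and item (ii) are false.

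For $j=2$: $\Lambda_2=\lambda_1^{(6)}$ is the sixth distinct eigenvalue. It is preceded by $1+2+2+1+3=9$ eigenvalues and has multiplicity $3$, so exactly $12=N_2$ eigenvalues are $\le\Lambda_2$, not $N_3=39$.

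No index convention rescues the statement, which your hedging ``up to the index shift'' would need. Indeed, $\lambda_{N_j}=\Lambda_j$ together with (ii) would force ${\rm mult}\,\Lambda_j\ge N_{j+1}-N_j+1=3^{j+1}+1>N_j$.

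Hence your Step 3, the geometric-series bookkeeping aimed at $N_{j+1}$, cannot close. It also contradicts your own Step 2, which already asserts that exactly $N_j$ eigenvalues lie at or below $\Lambda_j$. Step 3 repeats the slip above as well: the level-$(j+1)$ value attached to $\Lambda_j$ is $\phi_+(6)=3$, not $\phi_-(6)$, which equals $2$.
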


%\textbf{Notation.} Throughout the paper, 
%we will use $\langle u,v\rangle$ for to denote the standard $L^2$-product of $u,v\in L^2(K)$, and $\langle u|v\rangle$ for the $L^2$-product of functions in different spaces, e.g. $u\in H^{-s}(K)$ and $v\in H^s(K)$. \pat{(Check is is what is meant)}\eva{This should be deleted since it is not correct and misplaced. See Section \ref{SS:function_spaces_SG} below.}
%we write $A\simeq B$ meaning that the quantity $A$ is bounded above and below by different constants times the quantity $B$. 

%%%%%------------------------------------------------------
%%%%%------------------------------------------------------

\subsection{Riesz potentials}
%%%%%%%----------------------------
%%%%%%%----------------------------
By virtue of the spectral theorem, the heat semigroup can be expressed as
\begin{equation*}%\label{E:Semigroup_spectral}
    P_tu(x)=\sum_{k= 1}^\infty e^{-\lambda_{k}t}\langle u,\varphi_k\rangle\varphi_k(x).
\end{equation*}
% The Bessel and Riesz potentials admit equivalent expressions in the $L^2$-sense in terms of the spectral resolution and of the heat semigroup. Namely, for any $s>0$,
% \begin{equation}\label{E:def_Bessel}
%     \begin{aligned}
%     (I-\Delta)^{-s/2}u
%     &=\int_0^\infty (1+\lambda)^{s/2}dE_\lambda(u)=\sum_{\varphi}(1+\lambda_\varphi)^{s/2}\langle u|\varphi\rangle\varphi\\
%     &=\frac{1}{\Gamma(s/2)}\int_0^\infty t^{s/2}e^{-t}P_tu\,\frac{dt}{t}
%     \end{aligned}
% \end{equation}
% for the Bessel potential, and
%\eva{das steht schon oben}where $\langle \cdot,\cdot\rangle$ denotes the standard inner product in $L^2(K)$, 
The Riesz potential admits an equivalent expression in the $L^2$-sense in terms of the spectral resolution of the heat semigroup, that is
\begin{equation}\label{E:def_Riesz}
    \begin{aligned}
    (-\Delta)^{-s/2}u
    &=\int_0^\infty \lambda^{-s/2}dE_\lambda(u)=\sum_{k=1}^\infty\lambda_k^{-s/2}\langle u,\varphi_k\rangle\varphi_k
    =\frac{1}{\Gamma(s/2)}\int_0^\infty t^{s/2}P_tu\,\frac{dt}{t},
    \end{aligned}
\end{equation}
c.f.~\cite[Proposition 2.4]{HZ05}. 
% \begin{remark}
%     To avoid problems \pat{(for now, not sure how much we need to go into this)} we will consider the Laplace operator with Dirichlet boundary conditions and focus on the Riesz potential.
% \end{remark}
The above can be regarded as an integral operator by writing in~\eqref{E:def_Riesz} the heat semigroup in terms of the heat kernel. 
%we obtain , for $u$ defined pointwise,
% \begin{align*}
%     (-\Delta)^{-s/2}u(x)
%     &=\frac{1}{\Gamma(s/2)}\int_0^\infty t^{s/2}\int_K u(y)p_t(x,y)\,d\mu(y)\frac{dt}{t}\\
%     &=\int_K u(y)\frac{1}{\Gamma(s/2)}\int_0^\infty t^{s/2}p_t(x,y)\,\frac{dt}{t}d\mu(y).
% \end{align*}
%In this way 
In this way, we arrive at the definition of the Riesz kernel of order $s>0$.

\begin{definition}%\label{D:def_Riesz_kernel}
Let $s>0$. The fractional Riesz kernel of order $s$ is given by
\begin{equation}\label{E:def_Riesz_kernel}
    k_{s}(x,y):=\frac{1}{\Gamma(s/2)}\int_0^\infty t^{s/2}p_t(x,y)\,\frac{dt}{t}
\end{equation}
for all $x,y\in K$, $x\neq y$.

\end{definition}

\begin{remark}
Note that, a priori, $k_s(x,y)$ need not be well-defined for all $s>0$. When $0<s<d_S$, it is known that the Riesz kernel~\eqref{E:def_Riesz_kernel} satisfies
\begin{equation*}%\label{E:Riesz_kernel_bounds}
    C^{-1}d(x,y)^{-\frac{d_W}{2}(d_S-s)}\leq k_s(x,y)\leq Cd(x,y)^{-\frac{d_W}{2}(d_S-s)},
\end{equation*}
see~\cite[Proposition 2.7, Proposition 2.8]{HZ05}.
\end{remark}
% Moreover, Weyl's law~\eqref{E:Weyl_law} implies that the series~\eqref{E:Riesz_kernel_spectral} converges pointwise as long as $0<s<d_S$ \pat{(we can make this rigorous later)} and so the question is now

% \begin{center}
%     what happens at criticality, i.e. when $s=d_S$?
% \end{center}
At criticality, i.e. when $s=d_S$, the kernel displays a similar logarithmic behavior as in the Euclidean/Riemannian manifold case when $s$ equals the dimension of the space. The proof of this estimate can be found in~\cite[Proposition 2.8]{BC23}, and we include it here  in detail for completeness.
\begin{proposition}\label{P:log_correlation_kernel}~\cite[Proposition 2.8]{BC23}
There exists constants $C_0,c_0>0$ such that
    \begin{equation}\label{E:log_correlation_kernel}
      c_0\log\frac{1}{d(x,y)} \leq k_{d_S}(x,y)\leq  C_0\Big(1+\log\frac{1}{d(x,y)}\Big).
    \end{equation}
\end{proposition}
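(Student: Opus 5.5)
The plan is to split the time integral in the definition~\eqref{E:def_Riesz_kernel} of $k_{d_S}$. With $s=d_S$ the prefactor $t^{d_S/2}$ exactly cancels the on-diagonal decay $t^{-d_S/2}$ in~\eqref{E:subG_HKE}, so the integrand behaves like $t^{-1}$ on the intermediate scales. Set $r:=d(x,y)\le 1$ and $\tau:=r^{d_W}$. We write
\[
\Gamma(d_S/2)\,k_{d_S}(x,y)=\int_0^{\tau}+\int_{\tau}^{1}+\int_1^\infty t^{d_S/2}p_t(x,y)\,\frac{dt}{t}=:I_1+I_2+I_3
\]
and estimate each piece from above and below.

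\textbf{Upper bound.}
\begin{itemize}
\item For $I_3$, the global estimate~\eqref{E:HKE_large_t} gives $t^{d_S/2}p_t\le c_3e^{-t/2}$, so $I_3\le c_3\int_1^\infty e^{-t/2}\,dt/t<\infty$, a constant.
\item For $I_2$, the upper bound in~\eqref{E:subG_HKE} gives $t^{d_S/2}p_t\le c_1$, hence $I_2\le c_1\log(1/\tau)=c_1d_W\log(1/r)$.
\item For $I_1$, substitute $u=(\tau/t)^{1/(d_W-1)}$. Then
\[
I_1\le c_1\int_0^\tau \exp\!\big(-c_2(\tau/t)^{1/(d_W-1)}\big)\frac{dt}{t}=c_1(d_W-1)\int_1^\infty e^{-c_2u}\frac{du}{u},
\]
again a constant independent of $r$.
\end{itemize}
Summing the three pieces gives $C_0(1+\log(1/r))$.

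\textbf{Lower bound.} All three integrands are nonnegative, so it suffices to bound $I_2$ from below. On $t\in[\tau,1)$ we have $r^{d_W}/t\le 1$, so the lower bound in~\eqref{E:subG_HKE} gives $t^{d_S/2}p_t\ge c_1'e^{-c_2'}$. Therefore $I_2\ge c\,d_W\log(1/r)$, which is the asserted lower bound $c_0\log(1/d(x,y))$ (trivially true if $r=1$).

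\textbf{Main obstacle.} The technical points are that~\eqref{E:subG_HKE} and~\eqref{E:HKE_large_t} hold only for $\mu$-a.e.\ $x,y$, and that with Dirichlet boundary conditions the lower bound is not uniform near $V_0$. I would first handle the a.e.\ issue using the continuity of the heat kernel off the diagonal. For the boundary, I would either interpret the lower heat kernel estimate as the paper states it (taking it as given) or restrict to points away from $V_0$. Since the paper states~\eqref{E:subG_HKE} as a two-sided bound for $t\in(0,1)$, I will use it directly, and the statement follows with constants depending only on $c_1,c_2,c_3,d_W,d_S$.
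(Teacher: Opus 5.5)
Your proof is correct and follows the paper's argument: the paper substitutes $v=d(x,y)/t^{1/d_W}$ on $(0,1)$ and splits at $v=1$, which is exactly your split at $t=d(x,y)^{d_W}$, handles $t>1$ with the large-time bound, and gets the lower bound from the intermediate range, where the exponential factor is at least $e^{-C_2}$. Like you, the paper simply takes the two-sided estimate \eqref{E:subG_HKE} as given, so your boundary caveat applies equally to it; your direct bound of $\int_0^{\tau}$ by $c_1(d_W-1)\int_1^\infty e^{-c_2u}\,du/u$ is also cleaner than the paper's write-up, which attaches the large-time estimate to the piece $I_{1,2}$ instead of to $I_2$.
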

\begin{proof}
    Setting $s=d_S$ in~\eqref{E:def_Riesz_kernel} we first separate
    \begin{equation*}
        k_{d_S}(x,y)
        =\frac{1}{\Gamma(d_S/2)}\int_0^{1} t^{d_S/2}p_t(x,y)\,\frac{dt}{t}+\frac{1}{\Gamma(d_S/2)}\int_{1}^\infty t^{d_S/2}p_t(x,y)\,\frac{dt}{t}
        =I_1+I_2.
    \end{equation*}
    Applying the heat kernel estimates~\eqref{E:subG_HKE} and the change of variables $v=\frac{d(x,y)}{t^{1/d_W}}$,
    \begin{align}
        I_1
        &\leq c_1\int_0^{1}\exp \bigg( - c_2\Big(\frac{d(x,y)}{t^{1/d_W}}\Big)^{\frac{d_W}{d_W-1}} \bigg)\,\frac{dt}{t}\notag\\
        &=d_Wc_1\int_{d(x,y)}^\infty\exp\Big(\!\!-c_2v^{\frac{d_W}{d_W-1}}\Big)\frac{dv}{v}\notag\\
        &=d_Wc_1\int_{d(x,y)}^1\exp\Big(\!\!-c_2v^{\frac{d_W}{d_W-1}}\Big)\frac{dv}{v}
        +d_Wc_1\int_1^\infty \exp\Big(\!\!-c_2v^{\frac{d_W}{d_W-1}}\Big)\frac{dv}{v}\notag\\
        &=:I_{1,1}+I_{1,2}.\label{E:Riesz_kernel_bounds_01}
    \end{align}
    The first integral above is bounded by
    \begin{equation*}
        I_{1,1}\leq d_Wc_1\int_{d(x,y)}^1\frac{dv}{v}=d_Wc_1\Big[\log v\Big]_{d(x,y)}^1=-d_Wc_1\log d(x,y)=d_Wc_1\log\frac{1}{d(x,y)}
    \end{equation*}
    and the second by a constant. %\pat{(it should be possible to compute more precisely but not sure how helpful)}. 
    % \begin{align*}
    %     I_{1,2}=d_Wc_1c_3
    %     &=\frac{1}{\Gamma(s/2)}\int_0^\infty t^{s/2}p_t(x,y)\,\frac{dt}{t}
    % \end{align*}
    For the second term in~\eqref{E:Riesz_kernel_bounds_01} one applies the estimate~\eqref{E:HKE_large_t} for long times to get
    \begin{align*}
        I_{1,2}
        &\leq c_3\int_1^\infty e^{-t/2}\frac{dt}{t}=:c_4<\infty
    \end{align*}
    In total we arrive at
    \begin{equation*}
        k_{d_S}(x,y)\leq d_Wc_1\log\frac{1}{d(x,y)}+c_5.
    \end{equation*}
    For the lower bound, we apply the corresponding lower bound of the heat kernel~\eqref{E:subG_HKE} and argue as in~\eqref{E:Riesz_kernel_bounds_01} to get
    \begin{align*}
        k_{d_S}(x,y)
        &\geq C_1\int_0^{1}\exp \bigg( - C_2\Big(\frac{d(x,y)}{t^{1/d_W}}\Big)^{\frac{d_W}{d_W-1}} \bigg)\,\frac{dt}{t}\notag\\
        &=d_WC_1\int_{d(x,y)}^\infty\exp\Big(\!\!-C_2v^{\frac{d_W}{d_W-1}}\Big)\frac{dv}{v}\notag\\
        &\geq d_WC_1\int_{d(x,y)}^1\exp\Big(\!\!-C_2v^{\frac{d_W}{d_W-1}}\Big)\frac{dv}{v}\\
        &\geq d_WC_1e^{-C_2}\int_{d(x,y)}^1\frac{dv}{v}=d_WC_1e^{-C_2}\log\frac{1}{d(x,y)}
    \end{align*}
    as desired.
\end{proof}

Particularly useful for the construction of Gaussian free fields on fractals in Section~\ref{S:GFF_on_SG} is the following kernel expansion stated in~\cite[Proposition 2.4]{HZ05}.
\begin{lemma}\label{L:Riesz_spectral_kernel}
    For any $s>d_S/2$, the fractional Riesz kernel~\eqref{E:def_Riesz_kernel} admits the expansion
    \begin{equation*}%\label{E:Riesz_kernel_spectral}
        k_s(x,y)=\sum_{k= 1}^\infty\lambda_{k}^{-s/2}\varphi_k(x)\varphi_k(y),\quad x,y\in K,
    \end{equation*}
   where the convergence of the series is understood in the $L^2(K\times K,\mu\otimes\mu)$-sense. 
\end{lemma}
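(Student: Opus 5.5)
We prove the expansion by showing that the partial sums $S_N(x,y):=\sum_{k=1}^N\lambda_k^{-s/2}\varphi_k(x)\varphi_k(y)$ form a Cauchy sequence in $L^2(K\times K,\mu\otimes\mu)$, and then identifying the limit with $k_s$. Since the Dirichlet eigenfunctions $\{\varphi_k\}$ form an orthonormal basis of $L^2(K)$, the products $\{\varphi_k\otimes\varphi_j\}$ form an orthonormal basis of $L^2(K\times K)$. Hence
\begin{equation*}
\|S_M-S_N\|^2_{L^2(K\times K)}=\sum_{k=N+1}^M\lambda_k^{-s},
\end{equation*}
and everything reduces to showing $\sum_k\lambda_k^{-s}<\infty$ when $s>d_S/2$. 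This is exactly the condition that $(-\Delta)^{-s/2}$ be Hilbert--Schmidt.

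For this summability we use Weyl-type growth of the eigenvalues, which follows from Proposition~\ref{P:6series_props}. By part (ii), the number of eigenvalues $\le\Lambda_j=5^{j-2}\lambda_1^{(6)}$ is $N_{j+1}\asymp 3^{j}$. Grouping the eigenvalues into the blocks $(\Lambda_{j-1},\Lambda_j]$, each block contains at most $N_{j+1}\le C3^j$ eigenvalues, and each of them is at least $\Lambda_{j-1}\ge c5^{j}$. Therefore
\begin{equation*}
\sum_k\lambda_k^{-s}\le C'+C\sum_j 3^j5^{-js}.
\end{equation*}
The right-hand side is finite iff $5^s>3$, i.e. $s>\log3/\log5=d_S/2$. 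The first finitely many eigenvalues are positive, because the Dirichlet Laplacian has $\lambda_1>0$, so they contribute only a finite amount.

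Alternatively, one can argue directly with the heat kernel. Using $\operatorname{tr}P_t=\sum_k e^{-\lambda_k t}=\int_K p_t(x,x)\,d\mu\le Ct^{-d_S/2}$ for small $t$ (from~\eqref{E:subG_HKE}), together with~\eqref{E:HKE_large_t} for large $t$, we get
\begin{equation*}
\sum_k\lambda_k^{-s}=\frac1{\Gamma(s)}\int_0^\infty t^{s}\operatorname{tr}P_t\,\frac{dt}{t}<\infty,
\end{equation*}
which is finite precisely when $s>d_S/2$.

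It remains to identify the limit $S$ with $k_s$. We insert the spectral expansion $p_t(x,y)=\sum_k e^{-\lambda_k t}\varphi_k(x)\varphi_k(y)$ into~\eqref{E:def_Riesz_kernel}. The cleanest route is to test against $u\otimes v$ with $u,v\in L^2(K)$. By~\eqref{E:def_Riesz} and Fubini,
\begin{equation*}
\iint k_s(x,y)u(x)v(y)\,d\mu\,d\mu=\langle(-\Delta)^{-s/2}u,v\rangle=\sum_k\lambda_k^{-s/2}\langle u,\varphi_k\rangle\langle v,\varphi_k\rangle=\langle S,u\otimes v\rangle.
\end{equation*}
Fubini is justified because $p_t\ge0$ and $\int_0^\infty t^{s/2}\langle P_t|u|,|v|\rangle\,\frac{dt}{t}<\infty$. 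This last bound uses $\|P_t\|\le e^{-\lambda_1 t}$ for large $t$, and $\|P_t\|\le1$ together with $s>0$ for small $t$.

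For the identification to be meaningful we need $k_s\in L^2(K\times K)$, or at least $k_s\in L^1$, so that a kernel agreeing with $S$ on the total family $\{u\otimes v\}$ equals $S$ almost everywhere. This follows from the computation above applied to $u=v=1$: it gives $\iint k_s<\infty$ since $k_s\ge0$, so $k_s\in L^1$. Linear combinations of products $u\otimes v$ with $u,v$ bounded are dense, so $k_s=S$ $\mu\otimes\mu$-a.e.

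The main obstacle is the summability $\sum_k\lambda_k^{-s}<\infty$, which is where the threshold $d_S/2$ enters. We handle it via Proposition~\ref{P:6series_props}, or via the trace estimate as a backup.
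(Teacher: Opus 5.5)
Your proposal is correct and follows the paper's own argument. Orthonormality of the eigenfunctions reduces the $L^2(K\times K,\mu\otimes\mu)$-norm of the partial sums to $\sum_k\lambda_k^{-s}$, and grouping the eigenvalues via Proposition~\ref{P:6series_props} bounds this by a multiple of $\sum_j 3^j5^{-js}$, which is finite for $s>\log 3/\log 5=d_S/2$. The paper leaves implicit the step identifying the $L^2$-limit with $k_s$; you supply it by testing against $u\otimes v$, where the uniqueness is best phrased as the fact that the finite signed measure $(k_s-S)\,d(\mu\otimes\mu)$ vanishes once it vanishes on all rectangles. Your heat-trace route is a valid but unneeded alternative.
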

\begin{proof}
%Note that by the spectral theorem for the heat kernel
For any $\ell\geq 1$, define
\begin{equation*}
k_{s,\ell}(x,y):%=\frac1{\Gamma(s/2)}\int_0^\infty t^{s/2-1}\sum_{j=1}^\ell e^{-\lambda_j t}\varphi_i(x)\varphi_j(y)\, dt
=\sum_{j=1}^\ell\lambda_j^{-s/2}\varphi_j(x)\varphi_j(y).
\end{equation*}
The orthogonality of the eigenfunctions $\varphi_j$ together with Proposition~\ref{P:6series_props} yield
    \begin{equation*}
    \begin{aligned}
        \|k_{s,\ell}(x,y)\|^2_{L^2(K\times K, \mu\otimes\mu)}
        &=\sum_{j=1}^\infty\lambda_j^{-s}
        =\sum_{j=1}^{\infty}\sum_{k=N_{j-1}+1}^{N_j}\lambda_{k}^{-s}\\
        &\simeq \sum_{j=1}^{\infty}(N_j-N_{j-1})\Lambda_{j-1}^{-s}
        = \sum_{j=1}^{\infty}3^j\Lambda_{j-1}^{-s}
        \simeq \sum_{j=1}^{\infty}3^j5^{-sj}
    \end{aligned}
    \end{equation*}
    which is bounded for $s>\frac{d_S}{2}=\frac{\log 3}{\log 5}$. %and therefore the sequence $\{k_{s,\ell}(x,y)\}_{\ell\geq 1}$ converges in $L^2(K\times K,\mu\otimes\mu)$. 
% Hence, 
%     \begin{align*}
%    \|k_{s,\ell}(x,y)- k_s(x,y)\|_{L^2(K\times K,\mu\otimes\mu)}\xrightarrow{\ell\to\infty}0.
%     \end{align*}
Here, $A\simeq B$ means that the quantity $A$ is bounded above and below by different constants times the quantity $B$. 
\end{proof}
%%%----------------------------------------------------------------
\subsection{Function spaces}%\label{SS:function_spaces_SG}
The function spaces playing a role in the current analysis are those associated with the Riesz potentials. In the sequel we follow its characterization from~\cite[Definition 3.1]{HZ05}.

\medskip

\begin{definition}%\label{D:Sobolev_spaces}
    Let $s>0$. The Sobolev-Riesz potential space of order $s$ is defined as
    \begin{align*}
        %&H^s:=\{v=(I-\Delta)^{s/2}u\text{ for some }u\in L^2(K,\mu)\}%\label{E:def_Bessel_Sobolev_sp}\\
        &H^s(K):=\{v=(-\Delta)^{-s/2}u\text{ for some }u\in L^2(K)\}%\label{E:def_Riesz_Sobolev_sp}
    \end{align*}
    with associated norm given by
    \begin{equation*}%\label{E:def_seminorms}
        \|v\|_{H^s}:=\|(-\Delta)^{s/2}v\|.
    \end{equation*}
\end{definition}
By virtue of the spectral decomposition, the norm %in Definition~\ref{D:Sobolev_spaces} 
above admits the expression  \begin{equation}\label{E:spectral_seminorm}
        %\|v\|_{H^s}^2=\sum_\varphi (1+\lambda_\varphi)^{-s}|\langle u|\varphi\rangle|^2
        %\qquad\text{and}\qquad
        \|v\|_{H^s}^2=\sum_{k=1}^\infty \lambda_k^{s}\langle v,\varphi_k\rangle^2.  
    \end{equation}
\begin{remark}\label{R:negative_Hs}
    We define the space $H^{-s}(K)$ as the completion of $L^2(K)$ with respect to the norm
    \begin{equation*}%\label{E:spectral_neg_seminorm}
        \|v\|_{H^{-s}}^2=\|(-\Delta)^{-s/2}v\|^2=\sum_{k=1}^\infty \lambda_k^{-s}\langle v,\varphi_k\rangle^2,
    \end{equation*}
so that, formally, $H^{-s}(K)=(-\Delta)^{s/2}[L^2(K)]$. Since
\begin{equation*}
    |\langle u,v\rangle|\leq \|u\|_{H^{-s}}\|v\|_{H^s}
\end{equation*}
for all $u\in L^2(K)$ and $v\in H^s(K)$, the inner product in $L^2$ continuously extends to a bilinear form
\begin{align*}
   \langle \cdot,\cdot\rangle_{H^{-s},H^s} \colon H^{-s}(K)\times H^{s}(K)\to\R.
\end{align*}
In particular, $H^{-s}(K)$ can be identified with the dual space of $H^s(K)$. We simply write $\langle u,v\rangle$ for the extended dual pairing between $H^{-s}(K)$ and $H^s(K)$, whenever $u\in H^{-s}(K)$ and $v\in H^s(K)$. 
\end{remark}

Further, following~\cite{BC23}
we consider the space of test functions 
 \begin{align*}
 		\mathcal S(K):=\Big\{u\in C_0(K)\colon \lim_{n\to\infty}n^k\Big|\int_K\varphi_n(y)u(y)\, d\mu(y)\Big|=0\quad\forall k\geq0\Big\}
 	\end{align*}
 	endowed with the topology induced by the family of norms $\{\|(-\Delta)^ku\|\}_{k\geq 0}$. We denote its dual by $\mathcal S'(K)$ and use the same notation $\langle \cdot,\cdot\rangle$ for the dual pairing between $\mathcal S(K)$ and $\mathcal S'(K)$. Note that $\mathcal S(K)$ embeds continuously into $H^s(K)$ for all $s>0$.

%%%%%-------------------------------------------------------
%%%%%------------------------------------------------------

%%%%%------------------------------------------------------
%%%%%------------------------------------------------------
\section{Fractal Gaussian free fields}\label{S:GFF_on_SG}
The aim of this section is to construct and analyze a random field $h$ on $K$ which may be viewed as the analogue to the planar Gaussian free field on the Sierpinski gasket. The law of this field is formally given by the ``probability measure''
\begin{align*}
{\rm Law}_{h}(du)=\frac1{Z}\exp\Big(-\frac12 \|(-\Delta)^{d_S/4}u\|_{L^2}^2\Big)\, du
\end{align*}
for a suitable (non-existing) normalization constant $Z$. 
 Note that, due to the logarithmic divergence of the Riesz kernel $k_{d_S}$ associated with $(-\Delta)^{-d_S/2}$, the field $h$ cannot be defined pointwise, but only in a distributional sense.

\begin{definition}%\label{def: gf}
We say that	a centered Gaussian random element $h\colon\Omega\to \mathcal S'(K)$ is a \emph{fractal Gaussian free field} (FGFF) on $K$  if its covariance is given by
	 \begin{equation}\label{E:cov_FGFF}
		 \mathbb E[\langle h,u\rangle \langle h,v\rangle]=\int_K\int_{K} k_{d_S}(x,y)u(x)v(y)\, d\mu(x)\, d\mu(y)
     \end{equation}
     for all $u,v\in \mathcal S(K)$.
\end{definition}

 \begin{theorem}
 	The FGFF on $K$ exists, and it is unique in law.
 \end{theorem}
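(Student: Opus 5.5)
Existence will come from an explicit random series in the eigenbasis, and uniqueness in law from the fact that a centered Gaussian measure on $\mathcal S'(K)$ is determined by its covariance. Let $\{\xi_k\}_{k\geq1}$ be i.i.d. standard normal random variables on some probability space $(\Omega,\mathcal A,\mathbb P)$. Set
\begin{equation*}
h:=\sum_{k=1}^\infty \lambda_k^{-d_S/4}\xi_k\varphi_k .
\end{equation*}
The task is to show that this series converges almost surely in $\mathcal S'(K)$, in fact in a fixed negative Sobolev space $H^{-\varepsilon}(K)$ with $\varepsilon>0$, and that the limit has covariance~\eqref{E:cov_FGFF}.

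\emph{Convergence.} The $H^{-\varepsilon}$-norm of the partial sums is
\[
\E\Big\|\sum_{k=\ell}^{m}\lambda_k^{-d_S/4}\xi_k\varphi_k\Big\|_{H^{-\varepsilon}}^2=\sum_{k=\ell}^m\lambda_k^{-d_S/2-\varepsilon}.
\]
Proposition~\ref{P:6series_props} gives $\lambda_k\simeq 5^{j}$ on the block $N_{j-1}<k\leq N_j$, and the block has size $\simeq3^j$. This is exactly the computation in the proof of Lemma~\ref{L:Riesz_spectral_kernel}. Summing over blocks gives $\sum_k\lambda_k^{-s}<\infty$ as soon as $s>d_S/2$, which holds for $s=d_S/2+\varepsilon$. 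Hence the partial sums are Cauchy in $L^2(\Omega;H^{-\varepsilon}(K))$. Since the summands are independent and symmetric, the Itô–Nisio theorem (or just a subsequence argument in $L^2$) yields almost sure convergence. Because $\mathcal S(K)\hookrightarrow H^{\varepsilon}(K)$ continuously, $H^{-\varepsilon}(K)\hookrightarrow\mathcal S'(K)$, so $h$ is a random element of $\mathcal S'(K)$. It is centered Gaussian, being a limit of centered Gaussian vectors.

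\emph{Covariance.} For $u,v\in\mathcal S(K)$,
\[
\E[\langle h,u\rangle\langle h,v\rangle]=\sum_k\lambda_k^{-d_S/2}\langle u,\varphi_k\rangle\langle v,\varphi_k\rangle .
\]
This sum converges absolutely by the rapid decay of the coefficients of test functions. The main obstacle is identifying this sum with the double integral of $k_{d_S}$. Lemma~\ref{L:Riesz_spectral_kernel} covers only the $L^2$ expansion for $s>d_S/2$, and $d_S>d_S/2$ indeed, so it applies at $s=d_S$. It gives $k_{d_S}(x,y)=\sum_k\lambda_k^{-d_S/2}\varphi_k(x)\varphi_k(y)$ in $L^2(K\times K)$. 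Pairing with $u\otimes v\in L^2(K\times K)$ is continuous, so the integral equals the series.

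Alternatively, one can argue directly from~\eqref{E:def_Riesz_kernel}. Write
\[
\iint k_{d_S}(x,y)u(x)v(y)\,d\mu(x)\,d\mu(y)=\frac{1}{\Gamma(d_S/2)}\int_0^\infty t^{d_S/2}\langle P_tu,v\rangle\frac{dt}{t}.
\]
Fubini is justified by the logarithmic bound of Proposition~\ref{P:log_correlation_kernel}, which makes $k_{d_S}$ integrable on $K\times K$, together with $u,v$ bounded. Then expand $P_t$ spectrally and use $\int_0^\infty t^{d_S/2-1}e^{-\lambda t}\,dt=\Gamma(d_S/2)\lambda^{-d_S/2}$.

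\emph{Uniqueness.} Any two centered Gaussian elements of $\mathcal S'(K)$ with covariance~\eqref{E:cov_FGFF} have the same characteristic functional $\E e^{i\langle h,u\rangle}=\exp(-\tfrac12\E\langle h,u\rangle^2)$ for $u\in\mathcal S(K)$. Hence all finite-dimensional distributions $(\langle h,u_1\rangle,\dots,\langle h,u_n\rangle)$ agree. These generate the cylindrical $\sigma$-algebra on $\mathcal S'(K)$, which is the Borel one since $\mathcal S(K)$ is a nuclear Fréchet space, essentially a sequence space via the eigen-coefficients. So the laws coincide.
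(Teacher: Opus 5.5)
Your proof is correct. For existence you follow the constructive route that the paper only mentions here and carries out in Theorem~\ref{T:convergence}: the eigenfunction series, its convergence in $H^{-\varepsilon}(K)$ via the block count $3^j$ against eigenvalues of size $5^j$, and the covariance identification through Lemma~\ref{L:Riesz_spectral_kernel}. The proof the paper actually cites for this theorem, \cite[Theorem 2.12]{BC23}, instead gets existence and uniqueness at once from the Bochner--Minlos theorem. It does so by checking that $u\mapsto\exp\bigl(-\tfrac12\|u\|_{H^{-d_S/2}}^2\bigr)$ is continuous and positive definite on the nuclear space $\mathcal S(K)$.

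The abstract route is shorter and gives uniqueness for free, but it says nothing about where the field lives. Your series gives an explicit realization with $h\in H^{-\varepsilon}(K)$ almost surely. It also produces the approximations $h_\ell$ that the paper needs later for the LQG measure. Your uniqueness step is exactly the uniqueness half of Bochner--Minlos made explicit: the characteristic functional fixes the finite-dimensional laws, hence the law on the cylindrical $\sigma$-algebra, which is the Borel one because $\mathcal S(K)$ is nuclear Fr\'echet.

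Your direct Fubini/spectral verification of
$\iint k_{d_S}(x,y)u(x)v(y)\,d\mu\,d\mu=\sum_k\lambda_k^{-d_S/2}\langle u,\varphi_k\rangle\langle v,\varphi_k\rangle$ is a worthwhile addition. The paper's proof of Lemma~\ref{L:Riesz_spectral_kernel} only shows that the partial sums are bounded in $L^2(K\times K)$. It does not show that their limit is the heat-kernel integral~\eqref{E:def_Riesz_kernel}. Your argument supplies that identification: Tonelli with the upper bound of Proposition~\ref{P:log_correlation_kernel}, then $\int_0^\infty t^{d_S/2-1}e^{-\lambda_k t}\,dt=\Gamma(d_S/2)\lambda_k^{-d_S/2}$.
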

%\begin{proof}
 %	Following \cite[Theorem 2.12]{BC23}, we first define
 	%\begin{align*}
 		%S(K):=\Big\{u\in C_0(K)\colon \lim_{n\to\infty}n^k\Big|\int_K\varphi_n(y)u(y)\, d\mu(y)\Big|=0\quad\forall k\geq0\Big\}
 	%\end{align*}
 	%endowed with the topology induced by the family of norms $\|(-\Delta)^ku\|_{L^2}$. This defines a Fr\'echet nuclear space. Second, note that the functional $\chi\colon S(K)\to  [0,\infty)$ given by
 	%\begin{equation*}
 		%\chi\colon u\to \exp\Big(-\frac{1}{2}\|u\|_{H^{d_S/2}}^2\Big)
        %\exp(-\frac12\int_K|(-\Delta)^{-d_S/4}u|^2\, d\mu)
 	%\end{equation*}
 	%is continuous at $0$ and positive definite \cite[Theorem 2.11]{BC23}. Thus, the Bochner-Minlos theorem, see e.g.~\cite[Theorem 4.3, p.410]{VTC87} implies the existence of a characteristic functional taking values in $S'(K)$ that uniquely characterizes a centered Gaussian random element in $S'(K)$ with covariance~\eqref{E:cov_FGFF}. Since $H^\eps(K)$ embeds continuously into $S(K)$ for every $\eps>0$ \pat{(or is it $H^{-\eps}$??)}\eva{ no it is $H^\eps$ but it is not clear and maybe it is not what I want to say}, the claim follows.
 %\end{proof}

 An abstract proof of the existence and uniqueness of $h$ via the Bochner-Minlos theorem can be found in~\cite[Theorem 2.12]{BC23}. %\pat{(We include the proof for completeness - or not??) }\eva{We show a little bit more since we prove existence in the negative Sobolev space.}
It is also possible to provide a more constructive proof of the existence of the FGFF by means of a suitable approximating sequence of Gaussian fields, c.f.~\cite[Remark 2.4]{BC23}. We analyze this approximation in the next section, which will further imply that $h\in H^{-\eps}(K)$ $\P$-a.s. for every $\eps>0$.
%%%%%------------------------------------------------------
\subsection{Approximating Gaussian fields}
As in~\cite[Definition 2.16]{BC23}, let $\{\xi_k\}_{k\geq 1}$ be a sequence of i.i.d. standard normal distributed random variables. For each $\ell\geq 1$, define the random field
\begin{equation}\label{E:def_approx_RF_SG}
    h_{\ell}(x):=\sum_{k=1}^{N_\ell}\xi_{k}\lambda_{k}^{-\frac{d_S}{4}}\varphi_k(x), \qquad x\in K,
\end{equation}
where $N_\ell$ is the multiplicity of the eigenvalue $\Lambda_\ell$, see Proposition~\ref{P:6series_props}. The covariance function of this field is thus given by
\begin{equation}\label{E:def_cov_approx_RF_SG}
  k_{d_S,\ell}(x,y):=  \mathbb{E}[h_{\ell}(x)h_{\ell}(y)]=\sum_{k=1}^{N_\ell}\lambda_{k}^{-\frac{d_S}{2}}\varphi_k(x)\varphi_k(y),\quad x,y\in K.
\end{equation}
% \begin{remark}\label{R:index_reason}
% The covariance function $k_\ell(x,y)$ can be regarded as the spectral projection of $k_{d_S}(x,y)$ to the eigenspace $E_{\leq \Lambda_\ell}$ of eigenfunctions with eigenvalue $\lambda\leq \Lambda_m$, i.e.
% \[
% h_\ell(x)=\sum_{\varphi\in E_{\leq \Lambda_{\ell}}}\xi_\varphi \lambda_{\varphi}^{-\frac{d_S}{4}}\varphi(x)
% =\sum_{\lambda\leq\Lambda_\ell}\sum_{\varphi\in E_\lambda}\xi_\varphi \lambda_{\varphi}^{-\frac{d_S}{4}}\varphi(x),\qquad x\in K.
% \]
% \end{remark}
The convergence of the sequence $\{h_{\ell}\}_{\ell\geq 1}$ can be shown using the spectral properties of $-\Delta$ that were recorded in Proposition~\ref{P:6series_props}.
\begin{theorem}\label{T:convergence}
\begin{enumerate}[wide=0em,label={\rm (\roman*)},itemsep=.5em]
\item%\label{P:approx_L2_Hs}
 The sequence $\{h_{\ell}\}_{\ell\geq 1}$ converges both in $L^2(\Omega)$ and $\mathbb{P}$-a.s. to the FGFF $h$ on $K$. In particular,
 \begin{equation*}%\label{E:h_as_lim}
    h\stackrel{d}{=}\sum_{k=1}^\infty\xi_{k}\lambda_{k}^{-\frac{d_S}{4}}\varphi_k
 \end{equation*}
and, $\mathbb{P}$-a.s., it holds that $h\in {H}^{-\varepsilon}(K)$ for any $\varepsilon>0$ and $h\notin L^2(K)$.
\item%\label{P:L2_weak_approx}
    For every $u\in{H}^{-d_S/2}(K)$, the sequence $\{\langle h_{\ell},u\rangle\}_{\ell\geq 1}$ is a centered, $L^2$-bounded martingale with respect to the filtration $\{\mathcal F_\ell\}_{\ell\geq1}$, where
    \begin{equation*}
        \mathcal{F}_{\ell}:=\sigma(\{\xi_1,\ldots,\xi_{N_\ell}\}).
    \end{equation*}
    Moreover, the sequence converges in $L^2(\Omega)$ and $\mathbb{P}$-a.s. to a centered Gaussian random variable, denoted by $h\cdot u$. In particular, $\P$-a.s.
    \begin{equation*}
    h\cdot u=\sum_{k=1}^{\infty}\xi_{k}\lambda_{k}^{-\frac{d_S}{4}}\langle \varphi_k,u\rangle%\in L^2(\Omega)
    \end{equation*}
    and
    \begin{equation*}
        \E[(h\cdot u)^2]=\|u\|_{H^{-d_S/2}}^2.
    \end{equation*}
   
\end{enumerate}
\end{theorem}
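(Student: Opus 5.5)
We will work throughout with the Hilbert-space valued series $h_\ell=\sum_{k\le N_\ell}\xi_k\lambda_k^{-d_S/4}\varphi_k$ and reduce everything to summability statements for the eigenvalue sequence, using the counting information of Proposition~\ref{P:6series_props}. The basic computation, carried out as in the proof of Lemma~\ref{L:Riesz_spectral_kernel}, is that for any $t\in\R$
\begin{equation*}
\sum_{k=1}^\infty\lambda_k^{-t}\simeq\sum_{j\ge1}3^j5^{-tj},
\end{equation*}
which is finite if and only if $t>d_S/2$.

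For (i), orthonormality of the $\varphi_k$ gives
\begin{equation*}
\E\|h_\ell-h_m\|_{H^{-\eps}}^2=\sum_{N_m<k\le N_\ell}\lambda_k^{-\eps-d_S/2}.
\end{equation*}
Since $\eps+d_S/2>d_S/2$, the full series $\sum_k\lambda_k^{-\eps-d_S/2}$ converges, so $\{h_\ell\}$ is Cauchy in $L^2(\Omega;H^{-\eps}(K))$. For almost sure convergence, we view $h_\ell$ as a sum of independent symmetric $H^{-\eps}$-valued random variables $\xi_k\lambda_k^{-d_S/4}\varphi_k$. The Itô--Nisio theorem, or simply the vector-valued martingale convergence theorem in the Hilbert space $H^{-\eps}$, then upgrades $L^2$-convergence to $\P$-a.s. convergence. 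Taking $\eps=1/n$ along a countable family shows that a single null set works for every $\eps>0$, since $H^{-\eps}\subset H^{-\eps'}$ for $\eps<\eps'$.

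To identify the limit as the FGFF, note that for $u\in\mathcal S(K)\subset H^{\eps}(K)$ the pairing $\langle h,u\rangle$ is an $L^2$-limit of centered Gaussians, hence centered Gaussian. Its covariance with $\langle h,v\rangle$ is $\sum_k\lambda_k^{-d_S/2}\langle\varphi_k,u\rangle\langle\varphi_k,v\rangle$. By Lemma~\ref{L:Riesz_spectral_kernel} with $s=d_S>d_S/2$, this sum equals the right-hand side of~\eqref{E:cov_FGFF}. Uniqueness in law then gives the distributional identity.

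For $h\notin L^2(K)$, we have $\|h\|^2=\sum_k\xi_k^2\lambda_k^{-d_S/2}$ with $\sum_k\lambda_k^{-d_S/2}=\infty$ (the borderline case $t=d_S/2$). By Kolmogorov's three-series theorem, or by the strong law applied blockwise, $\sum_k\xi_k^2a_k=\infty$ a.s. whenever $\sum_k a_k=\infty$ with $a_k\ge0$. A simple route is to note that $\E e^{-\sum_k\xi_k^2a_k}=\prod_k(1+2a_k)^{-1/2}=0$.

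For (ii), we set
\begin{equation*}
M_\ell:=\langle h_\ell,u\rangle=\sum_{k\le N_\ell}\xi_k\lambda_k^{-d_S/4}\langle\varphi_k,u\rangle.
\end{equation*}
It is $\mathcal F_\ell$-measurable, and its increments are built from $\xi_k$ with $k>N_\ell$, which are independent of $\mathcal F_\ell$ and centered; hence $M_\ell$ is a martingale. Moreover
\begin{equation*}
\E M_\ell^2=\sum_{k\le N_\ell}\lambda_k^{-d_S/2}\langle\varphi_k,u\rangle^2\le\|u\|_{H^{-d_S/2}}^2,
\end{equation*}
so the martingale is $L^2$-bounded. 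Doob's $L^2$ martingale convergence theorem gives convergence a.s. and in $L^2(\Omega)$ to a limit $h\cdot u$, which is Gaussian as an $L^2$-limit of Gaussians. Its variance is the limit of the variances, namely $\|u\|_{H^{-d_S/2}}^2$, and the series representation is immediate.

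One subtlety is that for $u\in H^{-d_S/2}(K)$ the coefficient $\langle\varphi_k,u\rangle$ must be interpreted via the dual pairing of Remark~\ref{R:negative_Hs}; this is fine since $\varphi_k$ lies in every $H^s(K)$.

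The main obstacle is the almost sure convergence in the infinite-dimensional space $H^{-\eps}(K)$ in (i), together with the a.s. statement $h\notin L^2(K)$. The remaining steps are direct spectral bookkeeping based on Proposition~\ref{P:6series_props}.
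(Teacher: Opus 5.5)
Your proof is correct. Its skeleton matches the paper's: the spectral bookkeeping via Proposition~\ref{P:6series_props}, the covariance identification through Lemma~\ref{L:Riesz_spectral_kernel}, and Doob's $L^2$ theorem for part (ii) are essentially identical. The differences are in two sub-steps of (i).

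\textbf{Almost sure convergence in $H^{-\varepsilon}(K)$.} The paper combines Chebyshev's inequality with Borel--Cantelli. This needs the tail bounds $\E\|h-h_\ell\|_{H^{-\varepsilon}}^2\lesssim\sum_{j>\ell}5^{-j\varepsilon}$ to be summable in $\ell$, so it exploits the geometric block structure of the spectrum. Your appeal to It\^o--Nisio, or to Hilbert-space martingale convergence, needs no rate and works for any $L^2$-convergent series of independent symmetric terms. There is an even more direct route: $\|h_\ell-h_m\|_{H^{-\varepsilon}}^2$ is a tail of the nonnegative series $\sum_k\lambda_k^{-\varepsilon-d_S/2}\xi_k^2$, which is a.s.\ finite because its expectation is. You also make explicit that a single null set serves for every $\varepsilon>0$, which the paper leaves implicit.

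\textbf{The claim $h\notin L^2(K)$ a.s.} The paper only observes that the case $\varepsilon=0$ of its computation gives $\E\|h\|_{L^2}^2=\infty$. That alone does not yield an almost sure statement. Your Laplace-transform identity supplies exactly the missing step: with $a_k=\lambda_k^{-d_S/2}$ you get $\E e^{-\sum_k a_k\xi_k^2}=\prod_k(1+2a_k)^{-1/2}=0$, because $\sum_k a_k\simeq\sum_j 3^j5^{-jd_S/2}=\sum_j 1=\infty$. On this point your argument is more complete than the paper's.
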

\begin{proof}
\begin{enumerate}[wide=0em,label={\rm (\roman*)},itemsep=.5em]
    \item  Fix $\varepsilon>0$. We show first that $h\in H^{-\varepsilon}(K)$ $\mathbb{P}$-a.s. By definition, applying~\eqref{E:spectral_seminorm} and Proposition~\ref{P:6series_props} we have
    \begin{equation*}
        \begin{aligned}
        \mathbb{E}\Big[\|h\|_{{H}^{-\varepsilon}}^2\Big]
        &=\sum_{k=1}^{\infty}\lambda_{k}^{-\frac{d_S}{2}-\varepsilon}\mathbb{E}[\xi_k^2]
        =\sum_{j=1}^{\infty}\sum_{k=N_{j-1}+1}^{N_j}\lambda_{k}^{-\frac{d_S}{2}-\varepsilon}
        \simeq \sum_{j=1}^{\infty}(N_j-N_{j-1})\Lambda_{j-1}^{-\frac{d_S}{2}-2\varepsilon}.
        \end{aligned}
    \end{equation*}
    Once again, $A\simeq B$ means that the quantity $A$ is bounded above and below by different constants times the quantity $B$. 
    Substituting in the above the expression for $N_j$ from~\eqref{E:def_6series_ev_mult},
    \begin{align}
        \mathbb{E}\big[\|h\|_{{H}^{-\varepsilon}}^2\big]
        &=\sum_{j=1}^\infty \Big(\frac{1}{2}(3^{j+1}-3)-\frac{1}{2}(3^j-3)\Big)\Lambda_{j-1}^{-\frac{d_S}{2}-\varepsilon}\notag\\
        &=\sum_{j=1}^\infty 3^j\Lambda_{j-1}^{-\frac{d_S}{2}-\varepsilon}
        %=\sum_{j=1}^\infty 3^j\big(5^{j-3}\lambda_1^{(6)} \big)^{-\frac{d_S}{2}-\varepsilon}\notag\\
        %&=(5^{-3}\lambda_1^{(6)})^{-\frac{d_S}{2}-\varepsilon}\sum_{j=1}^\infty \big(3{\cdot}5^{\frac{\log 3}{\log 5}}5^{\varepsilon}\big)^{-j}\notag\\
        %&=(5^{-3}\lambda_1^{(6)})^{-\frac{d_S}{2}-\varepsilon}\sum_{j=1}^\infty 5^{-j\varepsilon}
        \simeq \sum_{j=1}^\infty 5^{-j\varepsilon}
        ,\label{E:approx_L2_Hs_01}
    \end{align}
    where the last line follows from Proposition~\ref{P:6series_props}. The series above is finite if and only if $\varepsilon>0$, and the particular case $\varepsilon=0$ yields $h\notin L^2(K)$ $\mathbb{P}$-a.s. 

    \medskip

    %We now prove convergence in $L^2(\Omega,H^{-\varepsilon})$.
    Similarly, in view of~\eqref{E:approx_L2_Hs_01} and~\eqref{E:def_6series_ev_mult}, for any $\ell\geq 1$ it holds that
    \begin{equation*}%\label{E:approx_L2_Hs_02}
        \mathbb{E}\big[\|h-h_{\ell}\|_{{H}^{-\varepsilon}}^2\big]\apprle\sum_{j=N_\ell+1}^\infty 5^{-j\varepsilon}\xrightarrow{\ell\to\infty}0
    \end{equation*}
    and thus $\{h_{\ell}\}_{\ell\geq 1}$ converges in $L^2(\Omega)$. By virtue of Tschebychev's inequality and~\eqref{E:approx_L2_Hs_01}, for any $\delta>0$
    \begin{equation*}
        \sum_{\ell=1}^\infty\mathbb{P}(\|h-h_{\ell}\|_{{H}^{-\varepsilon}}>\delta) 
        \leq\frac{1}{\delta^2}\sum_{\ell=1}^\infty \mathbb{E}\Big[\|h-h_{\ell}\|_{{H}^{-\varepsilon}}^2\Big]<\infty.
    \end{equation*}
    The fast convergence theorem, see e.g.~\cite[Theorem 6.12]{Kle14}, implies $\mathbb{P}$-a.s. convergence and hence convergence in distribution. 

    \medskip

    Finally, we verify that $h$ is in fact a FGFF: Since $h$ is an $L^2(\Omega)$-limit of centered Gaussian random variables, it is a centered Gaussian random variable itself with covariance
    \begin{equation*}
        \begin{aligned}
        \mathbb{E}\big[\langle  h,u\rangle \langle  h, v\rangle\big]
        &=\sum_{k_1=1}^{\infty}\sum_{k_2=1}^{\infty}\mathbb{E}\Big[\xi_{{k_1}}\xi_{{k_2}}(\lambda_{{k_1}}\lambda_{{k_2}})^{-\frac{d_S}{4}}\langle \varphi_{k_1},u\rangle\langle \varphi_{k_2},v\rangle\Big]\\
        &=\sum_{k=1}^{\infty}\lambda_{k}^{-\frac{d_S}{2}}\langle \varphi_k,u\rangle\langle \varphi_k,v\rangle\\
        &=\int_{K}\int_{K}u(x)v(y)\sum_{k=1}^{\infty}\lambda_{k}^{-\frac{d_S}{2}}\varphi_k(x)\varphi_k(y)\,d\mu(y)d\mu(x)\\
        &=\int_{K}\int_{K}u(x)v(y)k_{d_S}(x,y)\,d\mu(y)\,d\mu(x),
        \end{aligned}
        \end{equation*}
        for any $u,v\in H^{d_S/2}(K)$. Note that in the first equality we used the convergence in $L^2(\Omega)$ of $h_{\ell}$ to $h$, the independence of $\xi_{k}$ in the second, and Lemma~\ref{L:Riesz_spectral_kernel} in the last two lines.
    
    \item Let $u\in{H}^{-d_S/2}(K)$. By definition of $h_\ell$, for any $\ell\geq 1$
    \begin{equation*}
        \langle h_{\ell}\,,\,u\rangle=\sum_{k=1}^{N_\ell}\xi_{k}\lambda_{k}^{-\frac{d_S}{4}}\langle \varphi_k,u\rangle,
    \end{equation*}
    which is a sum of independent centered (Gaussian) random variables and thus a martingale with respect to the natural filtration $\mathcal{F}_{\ell}$. Moreover, independence implies that
    \begin{equation*}
        \begin{aligned}
            \sup_{\ell\in\mathbb{N}}\mathbb{E}[|\langle h_{\ell},u\rangle|^2]
            &=\sup_{\ell\in\mathbb{N}}\mathbb{E}\bigg[\Big|\sum_{k=1}^{N_\ell}\xi_{k}\lambda_{k}^{-\frac{d_S}{4}}\langle \varphi_k,u\rangle\Big|^2\bigg]\\
            &=\sup_{\ell\in\mathbb{N}}\sum_{k=1}^{N_\ell}\mathbb{E}[\xi_{k}^2]\,\lambda_{k}^{-\frac{d_S}{2}}|\langle \varphi_k,u\rangle|^2\\
            %&=\sup_{\ell\in\mathbb{N}}\sum_{k=1}^{N_\ell}\lambda_{k}^{-\frac{d_S}{2}}|\langle \varphi_k,u\rangle_{L^2(K)}|^2\\
            &=\sum_{k=1}^{\infty}\lambda_{k}^{-\frac{d_S}{2}}|\langle \varphi_k,u\rangle|^2
            =\|u\|_{{H}^{-d_S/2}}^2<\infty.
        \end{aligned}
    \end{equation*}
    Therefore, $\{\langle h_\ell,u\rangle\}_{\ell\geq 1}$ is an $L^2(\Omega)$-bounded martingale. By virtue of Doob's $L^p$-convergence theorem for martingales, see e.g.~\cite[Theorem 11.10]{Kle14}, the sequence converges $\mathbb{P}$-a.s. and in $L^2(\Omega)$ to a random variable denoted by $h\cdot u\in L^2(\Omega)$. Moreover,
    \begin{align*}
         h\cdot u= \sum_{k=1}^{\infty}\xi_{k}\lambda_{k}^{-\frac{d_S}{4}}\langle \varphi_k,u\rangle\qquad \mathbb{P}\text{-a.s.}% \text{ in }L^2(\Omega,\mathbb{P})\text{ and }
    \end{align*}

    Since each $\langle h_\ell,u\rangle$ defines a centered Gaussian random variable, the $L^2$-convergence of $\langle h_\ell,u\rangle$ to $h\cdot u$ implies that $h\cdot u$ is a centered Gaussian random variable with variance given by $\|u\|_{H^{-d_s/2}}^2$.
    \end{enumerate}
%     {\color{gray}
%      In addition, since
%     \begin{align*}
%         \mathbb{E}[|\langle h_{K,\infty}\rangle_{L^2(K)}-\langle h_{K,\ell}\rangle_{L^2(K)}|^2]
%         &=\mathbb{E}[|\langle u|h_{K,\infty}-h_{K,\ell}\rangle_{L^2(K,\mu)}|^2]\\
%         &=\mathbb{E}\bigg[\Big|\sum_{k=N_\ell+1}^{\infty}\xi_{k}^2]\lambda_{k}^{-\frac{d_S}{2}}|\langle \varphi_k,u\rangle_{L^2(K,\mu)}\Big|^2\bigg]\\
%        % &=\sum_{k=N_\ell+1}^{\infty}\mathbb{E}[\xi_{\varphi_k}^2]\lambda_{\varphi_k}^{-\frac{d_S}{2}}|\langle \varphi_k,u\rangle_{L^2(K,\mu)}|^2\\
%         &=\sum_{k=N_\ell+1}^{\infty}\lambda_{k}^{-\frac{d_S}{2}}\big|\langle \varphi_k,u\rangle_{L^2(K,\mu)}\big|^2\xrightarrow{\ell\to\infty}0,
%     \end{align*}
%     the limit equals $\langle h_{K,\infty},u\rangle_{L^2(K,\mu)}$.
% }
\end{proof}

\begin{remark}
Analogous arguments as those in the proof of Theorem~\ref{T:convergence}(ii) yield that the FGFF $h$ can always be approximated by the sequence of Gaussian random fields 
\begin{equation*}
    h_{\ell}(x):=\sum_{k=1}^{N_\ell}\tilde{\xi}_k\lambda_{k}^{-\frac{d_S}{4}}\varphi_k(x), \qquad x\in K,
\end{equation*}
with $\tilde{\xi}_k:=\langle h, \varphi_k\rangle$. 
\end{remark}

%We now show that the limit $h_{K,\infty}$ from~\eqref{E:h_as_lim} is a random distribution in $H^{d_S/2}$.

%The random distribution obtained gives the quantum Gaussian field from Definition~\ref{D:def_QGF} on the Sierpinski gasket $K$.
%\begin{corollary}\label{C:h_in_cap}
  %  The limit $h_{K,\infty}$ from~\eqref{E:h_as_lim} is a random distribution in $H^{d_S/2}(K)$ and $h_K=h_{K,\infty}$ is the quantum Gaussian field on $K$ associated with its intrinsic standard Dirichlet Laplacian.
%\end{corollary}
We now discuss how the FGFF interplays with the symmetries and scaling properties of $K$.

%%%%%------------------------------------------------------
\subsection{Invariance properties}
%%%%%------------------------------------------------------
%%%%%------------------------------------------------------
%%%%%------------------------------------------------------

\subsubsection{Invariance by symmetries}\label{SS:symmetries_SG}
Let $\sigma_i\colon\mathbb{R}^2\to\mathbb{R}^2$, $i=1,2,3$, denote each of the reflections about the axes that divide the Sierpinski gasket into half, see Figure~\ref{F:reflections} . 

\begin{figure}[H]
	\includegraphics[scale=.35]{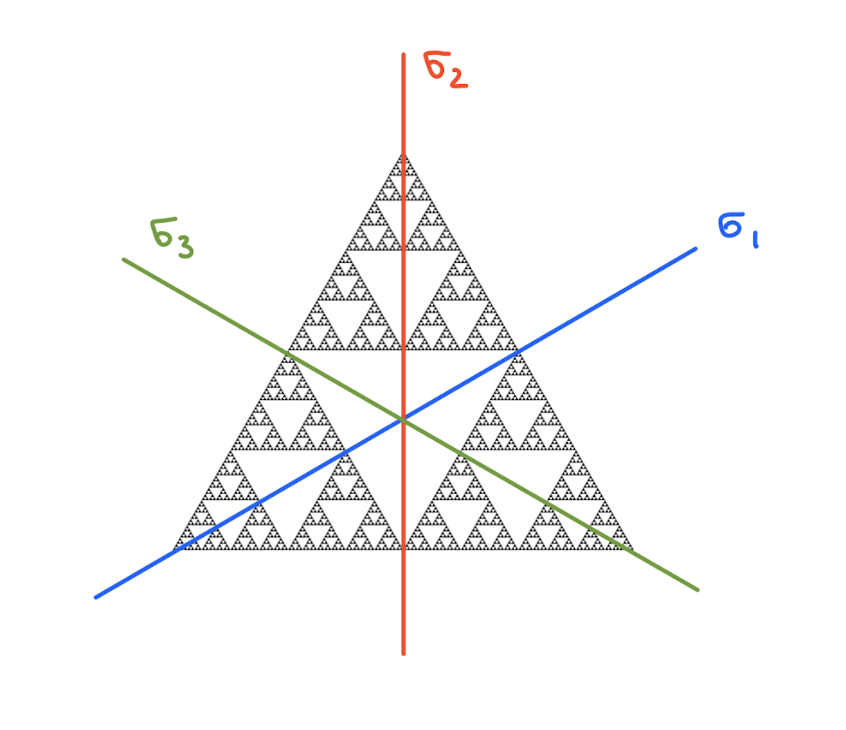}
	\caption{The reflections $\sigma_1$, $\sigma_2$ and $\sigma_3$.}
	\label{F:reflections}
\end{figure}

Then, $\sigma_i(V_0)=V_0$ and 
\begin{align*}
    \mathcal E(u\circ\sigma_i,v\circ\sigma_i)=\mathcal E(u,v).
\end{align*}
In particular, as in the proof of~\cite[Proposition 3.9]{BL22}, for any $i=1,2,3$ and $s>0$ it holds that
    \begin{equation}\label{E:kernel_symmetries}
    k_s(\sigma_i(x),\sigma_i(y))=k_s(x,y).
    \end{equation}
Since the underlying measure $\mu$ is also invariant under the symmetries $\sigma_i$, the invariance of the FGFF on $K$ follows.

\begin{theorem}\label{T:invariance_symmetry}
    For any $i=1,2,3$ it holds that
    \begin{equation}\label{E:invariance_symmetry}
        h\stackrel{d}{=}h{\circ}(\sigma_i)^{-1},
    \end{equation}
    where $h{\circ}(\sigma_i)^{-1}$ is understood in the distributional sense, i.e. $\langle h\circ(\sigma_i)^{-1},u\rangle=\langle h,u\circ \sigma_i\rangle$.
\end{theorem}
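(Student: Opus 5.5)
Both $h$ and $h\circ(\sigma_i)^{-1}$ are centered Gaussian random elements of $\mathcal S'(K)$, and the FGFF is unique in law among such elements with covariance~\eqref{E:cov_FGFF}. So it suffices to show that $h\circ(\sigma_i)^{-1}$ is a centered Gaussian element of $\mathcal S'(K)$ with the same covariance. Equivalently, we compare the characteristic functionals $u\mapsto \E[\exp(i\langle\cdot,u\rangle)]$ on $\mathcal S(K)$. These are Gaussian and hence determined by the covariance; by Bochner--Minlos they determine the law.

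\textbf{Well-definedness.} We first check that $u\mapsto u\circ\sigma_i$ maps $\mathcal S(K)$ continuously into itself, so that $\langle h,u\circ\sigma_i\rangle$ makes sense and defines an element of $\mathcal S'(K)$. Since $\mathcal E(u\circ\sigma_i,v\circ\sigma_i)=\mathcal E(u,v)$ and $\mu\circ\sigma_i^{-1}=\mu$, the map $U u:=u\circ\sigma_i$ is unitary on $L^2(K)$. It also commutes with the Dirichlet form, and hence with $\Delta$ (boundary values are preserved because $\sigma_i(V_0)=V_0$). Therefore $U$ commutes with the spectral projections and $\|(-\Delta)^k(u\circ\sigma_i)\|=\|(-\Delta)^ku\|$.

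The decay condition defining $\mathcal S(K)$ is stated in terms of individual coefficients $\langle\varphi_n,u\rangle$, so more care is needed there. Since $U$ preserves each eigenspace, the coefficients of $u\circ\sigma_i$ within a fixed eigenvalue block are an orthogonal rotation of those of $u$. On the gasket the multiplicities are bounded by $N_j$, and the indices $n$ in one block are comparable up to polynomial factors. Hence rapid decay of $u$'s coefficients transfers to those of $u\circ\sigma_i$, and continuity of $U$ follows from the invariance of the seminorms. Centeredness and Gaussianity of $h\circ(\sigma_i)^{-1}$ are then immediate, since each $\langle h,u\circ\sigma_i\rangle$ is a linear functional of $h$.

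\textbf{Covariance computation.} For $u,v\in\mathcal S(K)$, \eqref{E:cov_FGFF} gives
\[
\E[\langle h,u\circ\sigma_i\rangle\langle h,v\circ\sigma_i\rangle]=\int_K\int_K k_{d_S}(x,y)\,u(\sigma_i x)\,v(\sigma_i y)\,d\mu(x)\,d\mu(y).
\]
We substitute $x'=\sigma_i x$ and $y'=\sigma_i y$, using $\sigma_i^{-1}=\sigma_i$ and the invariance of $\mu$. Then~\eqref{E:kernel_symmetries} with $s=d_S$ gives $k_{d_S}(\sigma_i x',\sigma_i y')=k_{d_S}(x',y')$, so the integral equals $\int\int k_{d_S}(x',y')u(x')v(y')\,d\mu\,d\mu$. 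This is exactly~\eqref{E:cov_FGFF}, and uniqueness in law concludes the proof.

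As an alternative route that bypasses $\mathcal S'$ subtleties, one can use the series representation in Theorem~\ref{T:convergence}. Writing $\varphi_k\circ\sigma_i=\sum_m O_{km}\varphi_m$ with $O$ block-orthogonal on eigenspaces, we get $h\circ\sigma_i^{-1}=\sum_m\big(\sum_k O_{km}\xi_k\big)\lambda_m^{-d_S/4}\varphi_m$. Rotational invariance of i.i.d. Gaussian vectors then gives equality in law.

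\textbf{Main obstacle.} I expect the only delicate point to be the well-definedness step, i.e.\ the stability of $\mathcal S(K)$ under $\sigma_i$. The cleanest way through it is the eigenspace argument: invariance of $\mathcal E$ implies that $U$ commutes with $\Delta$.
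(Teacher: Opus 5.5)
Your proposal is correct and follows the paper's own route. Both $h$ and $h\circ(\sigma_i)^{-1}$ are centered Gaussian, their covariances agree via the change of variables $x\mapsto\sigma_i x$ together with the kernel invariance~\eqref{E:kernel_symmetries} and the $\sigma_i$-invariance of $\mu$, and uniqueness in law of the FGFF concludes. Your check that $u\mapsto u\circ\sigma_i$ preserves $\mathcal S(K)$ (through $U$ commuting with $\Delta$) and your alternative via block-orthogonal rotation of the $\xi_k$ are sound additions that the paper's two-line proof omits.
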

\begin{proof}
    By construction, both $h$ and $h{\circ}(\sigma_i)^{-1}$ are centered Gaussian fields. Moreover, by the invariance~\eqref{E:kernel_symmetries} of the kernel $k_{d_S}$ and that of the measure $\mu$, they have the same covariance function, which implies~\eqref{E:invariance_symmetry}.
\end{proof}
\subsubsection{Self-similarity by scaling}%\label{SS:scaling_SG}
We prove now the self-similarity of the FGFF on $K$ under the contractions $F_w$ from Section~\ref{SS:SG_basics}, where $w=w_1w_2\ldots w_n\in\{1,2,3\}^n$, $n\geq 1$. For a fixed word $w$, let $K_w:=F_w(K)\subset K$ denote the rescaled Sierpinski gasket associated with the contraction mapping $F_w$. %, see Figure~\ref{F:rescaled_SG}. 
The space is naturally equipped with the rescaled metric $d_{w}$ and measure $\mu_{w}$ that satisfy
    \begin{equation}\label{eq: measureinv}
    \begin{aligned}
        d_w(x,y)&=2^{-n}d(F_w^{-1}(x),F_w^{-1}(y)),\\
        (F_w^{-1})_\#\mu_{w}&=3^{-n}\mu.
    \end{aligned}
    \end{equation}
In particular, for any $u,v\in L^2(K_w,\mu_{w})$
    \begin{equation*}%\label{eq: l2inv}
    \langle u,v\rangle_{L^2(K_w,\mu_{w})}=3^{-n}\langle u\circ F_w,v\circ F_w\rangle_{L^2(K,\mu)}.
    \end{equation*}

% \begin{figure}[H]
%      \includegraphics[scale=.15]{SG.png}
%     \caption{The rescaled Sierpinski gasket associated with the map $F_{12}$.}
%     \label{F:rescaled_SG}
% \end{figure}
%\eva{The Dirichlet Laplacian on $K_w$ is understood with Dirichlet boundary condition on $V_w=F_w(V_0)$. 
The standard Dirichlet form with Dirichlet boundary conditions $(\mathcal{E}_w,\mathcal{F}_w)$ on $L^2(K_w,\mu_{w})$ satisfies
\begin{equation*}
     \mathcal E_w(u,v)=\left(\frac53\right)^n\mathcal E(u\circ F_w,v\circ F_w)
\end{equation*}
and note that $u|_{F_w(V_0)}\equiv 0$ for all $u\in \mathcal{F}_{w}$. As in the proof of \cite[Proposition 3.11]{BL22}, the associated heat kernel satisfies
\begin{equation}\label{E:scaling_HK_SG}
    p_t^w(x,y)=3^n p_{5^n t}(F_w^{-1}(x),F_w^{-1}(y)) 
    \end{equation}
for all $x,y\in K_w$, see e.g.~\cite[Section 3.4.2]{BL22}.

\medskip

To state the self-similarity under $F_w$ properly, we would like to consider a random field $h^{w}$ on $K_w$ formally given by
    \begin{equation*}
    h^{w}=h{\circ}F_w^{-1}.
    \end{equation*}
Note however, that the above equality has to be read in a distributional sense because $h$ is not defined pointwise. Therefore, analogously to Remark~\ref{R:negative_Hs}, by regarding $H^{-\eps}(K_w)$ as the completion of $L^2(K_w,\mu_{w})$ with respect to the corresponding norm $\|\cdot\|_{H^{-\eps}(K_w)}$ we set
    \begin{equation}\label{E:def_weak_prod_Kw}
    \langle h^{w},u\rangle_{H^{-\varepsilon}(K_w),H^\varepsilon(K_w)}:=3^{-n}\langle h,u\circ F_w\rangle_{H^{-\varepsilon}(K),H^\varepsilon(K)}
    \end{equation}
for any $u\in H^{\varepsilon}(K_w)$. 
In this way we may now state and prove the corresponding self-similarity of the FGFF $h$. %For the ease of reading, we will write in the proof simply $\langle\cdot,\cdot\rangle$ for the inner product~\eqref{E:def_weak_prod_Kw}.
\begin{theorem}\label{T:scaling_invariance}
	Let $h$ be the FGFF on $K$. For any $w\in\{1,2,3\}^n$ and $n\geq 1$,
	\begin{align*}
		h^{w}{:=}h\circ F_w^{-1},
	\end{align*}
    defined in the distributional sense \eqref{E:def_weak_prod_Kw},
	is the FGFF on $K_w$.
\end{theorem}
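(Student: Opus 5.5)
Since $h^{w}$ is defined in \eqref{E:def_weak_prod_Kw} as a linear image of the centered Gaussian field $h$, it is itself a centered Gaussian random element. Its law is therefore determined by its covariance, and the proof reduces to a covariance identity. Concretely, we must show that for all test functions $u,v$ on $K_w$,
\begin{equation*}
\mathbb{E}\big[\langle h^{w},u\rangle\langle h^{w},v\rangle\big]=\int_{K_w}\int_{K_w}k^{w}_{d_S}(x,y)u(x)v(y)\,d\mu_w(x)\,d\mu_w(y).
\end{equation*}
Here $k^{w}_{d_S}$ denotes the critical Riesz kernel built from the Dirichlet heat kernel $p^w_t$ on $K_w$ through \eqref{E:def_Riesz_kernel}, with the same exponent $d_S$, since $K_w$ has the same spectral dimension.

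The first step computes the left-hand side. By \eqref{E:def_weak_prod_Kw} and \eqref{E:cov_FGFF}, it equals
\begin{equation*}
3^{-2n}\int_K\int_K k_{d_S}(x',y')\,u(F_w x')\,v(F_w y')\,d\mu(x')\,d\mu(y').
\end{equation*}
We then change variables $x=F_w(x')$ and $y=F_w(y')$ using \eqref{eq: measureinv}, i.e.\ $d\mu(x')=3^{n}\,d\mu_w(x)$ after pushforward. The factor $3^{-2n}$ is absorbed, and the left-hand side becomes
\begin{equation*}
\int_{K_w}\int_{K_w}k_{d_S}(F_w^{-1}x,F_w^{-1}y)\,u(x)v(y)\,d\mu_w\,d\mu_w.
\end{equation*}
One has to check the normalization carefully, since the factor $3^{-n}$ in \eqref{E:def_weak_prod_Kw} was presumably chosen to make the constants match. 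If the powers of $3$ do not cancel exactly, the discrepancy has to be absorbed in the kernel identity of the next step.

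The second step is the kernel scaling identity, and it is the heart of the matter. Insert \eqref{E:scaling_HK_SG} into \eqref{E:def_Riesz_kernel} with $s=d_S$:
\begin{equation*}
k^w_{d_S}(x,y)=\frac{3^n}{\Gamma(d_S/2)}\int_0^\infty t^{d_S/2}p_{5^nt}(F_w^{-1}x,F_w^{-1}y)\,\frac{dt}{t}.
\end{equation*}
Substituting $\tau=5^nt$ leaves $dt/t$ invariant and produces the factor $5^{-nd_S/2}=3^{-n}$, because $5^{d_S/2}=3$. This gives exactly $k^w_{d_S}(x,y)=k_{d_S}(F_w^{-1}x,F_w^{-1}y)$. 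This exact cancellation is precisely why the critical exponent yields a scale-invariant (log-correlated) field. Combined with the first step, it gives the required covariance identity.

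The main obstacle is bookkeeping rather than ideas: making the normalizations of $\mu_w$, of the pairing \eqref{E:def_weak_prod_Kw}, and of the heat kernel scaling \eqref{E:scaling_HK_SG} consistent, and verifying that the test-function classes correspond. For the latter, note that $u\mapsto u\circ F_w$ maps $\mathcal S(K_w)$ or $H^{\varepsilon}(K_w)$ into the corresponding space on $K$. This holds because the Dirichlet eigenfunctions on $K_w$ are $\varphi_k\circ F_w^{-1}$, suitably normalized, with eigenvalues $5^n\lambda_k$. I would verify this correspondence via the scaling of $\mathcal{E}_w$ and use it to justify that the pairing is well defined. Uniqueness in law of the FGFF on $K_w$ (the analogue of the theorem on existence and uniqueness) then concludes the proof.
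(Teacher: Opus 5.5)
Your proposal is correct and follows essentially the same route as the paper: compute the covariance of $h^w$ from the pairing definition, change variables via the measure scaling, and use the critical Riesz-kernel identity $k^w_{d_S}(F_w x,F_w y)=k_{d_S}(x,y)$, which follows from the heat kernel scaling together with $5^{d_S/2}=3$. Your hedge about normalizations is unnecessary, because both cancellations are exact and independent of each other: $3^{-2n}$ against the $3^{2n}$ from the change of measure, and $3^n$ against $5^{-nd_S/2}$ in the kernel.
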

\begin{proof}
	Note first that, by definition, $h^{w}$ is a centered Gaussian field on $K_w$. 
	In view of~\eqref{E:scaling_HK_SG} and the definition of the Riesz kernel~\eqref{E:def_Riesz_kernel} it follows that
	\begin{equation*}
		k_s^w(F_w(x),F_w(y))
		=\frac{3^n}{5^{ns/2}\Gamma(s/2)}\int_0^\infty t^{s/2-1}p_t(x,y)\, dt
		=\frac{3^n}{5^{ns/2}}k_s(x,y)
	\end{equation*}
	for $\mu$-a.e. $x,y\in K$. In particular, in the critical case $s=d_S$ the latter reads
	\begin{equation}\label{eq: kernelinv}
		k_{d_S}^w(F_w(x),F_w(y))=k_{d_S}(x,y).
	\end{equation}
	In addition, %by definition of $h_{K_w}$ and $h_K$ it holds that
    for any $u,v\in H^\varepsilon(K_w)$
	\begin{equation*}
    \begin{aligned}
		\mathbb{E}[\langle h^w,u\rangle \langle h^w,v\rangle]
		&=3^{-2n}\mathbb{E}[\langle h,u\circ F_w\rangle \langle h,v\circ F_w\rangle]\\
		&=3^{-2n}\int_K\int_{K} k_{d_S}(x,y)u(F_w(x))v(F_w(y))\, d\mu(x)\, d\mu(y).
	\end{aligned}
    \end{equation*}
    By performing a change of variables, using~\eqref{eq: kernelinv} and~\eqref{eq: measureinv}, we finally obtain
    \begin{equation*}
	\begin{aligned}
		\mathbb{E}[\langle h^w,u\rangle \langle h^w,v\rangle]
		&=3^{-2n}\int_K\int_K k^w_{d_S}(F_w(x),F_w(y))u(F_w(x))v(F_w(y))\, d\mu(x)\, d\mu(y) \\
		&=\int_{K_w}\int_{K_w} k^w_{d_S}(\tilde x,\tilde y)u(\tilde x)v(\tilde y)\, d\mu_w(\tilde x)\, d\mu_w(\tilde y).
	\end{aligned}
    \end{equation*}
\end{proof}

% {\color{gray}
% The latter motivates now the following general definition.
% \begin{definition}
%     Let $h_X$ be fractal Gaussian free field on a metric measure space $(X,d_X,\mu_X)$ and let $\Phi\colon X\to X$ be a mapping. The notation $h_X{\circ}\Phi^{-1}$ means that
%     \begin{equation}\label{E:def_weak_composition}
%         \langle h_X{\circ}\Phi^{-1},u\rangle_{H^{-\varepsilon}(\Phi(X)),H^{\varepsilon}(\Phi(X))}
%         =\frac{d\mu{\circ}\Phi}{d\mu}\langle h_X,u{\circ}\Phi\rangle_{H^{-\varepsilon}(X),H^{\varepsilon}(X)}
%     \end{equation}
%     for any $u\in H^{\varepsilon}(\Phi(X))$.
% \end{definition}
% }

%%%%%------------------------------------------------------
%%%%%------------------------------------------------------
\section{Liouville Quantum Gravity measure}\label{S:LQGM}
The aim of this section is to construct a random measure that takes into account the geometry induced by the FGFF $h$ on $K$. Formally, such a measure is given by
\begin{equation*}
	e^{\gamma h(x)}\, d\mu(x)
\end{equation*}
for some suitable $\gamma\in\mathbb R$. However, since $h$ is almost surely not a function, we need to replace $h$ by appropriately rescaled and well-defined approximations.% $h_{K,\ell}$. 
% Moreover, a rescaling is needed in order to get convergence, which is why we consider the limit of the random measures
% \begin{equation*}
% 	d\nu_{\ell,\gamma}(x)=e^{\gamma h_{K,\ell}(x)-\frac{\gamma^2}2\mathbb E[h_{K,\ell}^2(x)]}\, d\mu(x).
% \end{equation*}
%This method goes back to Kahane \cite{Kah85} and is called \emph{Gaussian multiplicative chaos}, in short (GMC).
\medskip

In the special case of the Gaussian free field in $\mathbb{R}^2$, the resulting random measure is called \emph{Liouville Quantum Gravity measure} (LQG measure). The concept appeared in theoretical physics in the early 1980s, see~\cite{Pol81}, and its rigorous probabilistic construction has been treated afterwards by several authors~\cite{Kah85,DS11,RV14,Ber17,Sha16}.  

\medskip
%We follow the approach by Shamov \cite{Sha16}, who gave a first canonical definition of GMC that does not depend on any specific approximation scheme by employing a simple transformation rule.

\begin{remark}\label{R:gamma_range}
    Throughout this section, the parameter $\gamma$ is chosen in the range 
   \begin{equation*}
      |\gamma|< \sqrt{d_H/C_0},
  \end{equation*}
    where $C_0>0$ is the constant from~\eqref{E:log_correlation_kernel}.
\end{remark}
%%%%------------------------------------------------------------
%%%%------------------------------------------------------------
\subsection{Approximations}
In this section we analyze the approximating measures that arise from the eigenfunction approximation $h_{\ell}$ with covariance kernels $k_{d_s,\ell}$ introduced in~\eqref{E:def_approx_RF_SG} and~\eqref{E:def_cov_approx_RF_SG} respectively. 
%\begin{definition}\label{D:def_approx_random_measure}
%	Let $\gamma>0$ and $\ell\geq 1$. For any random function $h\colon\Omega\to\bigcap_{\varepsilon>0}H^\varepsilon(K)$ with the same covariance structure as $h_{K,\ell}$ we define the random measure
For fixed $\gamma>0$ and $\ell\geq 1$, these measures are defined as
	\begin{equation}\label{E:def_approx_LQM}
		d\nu_{{\ell}}^\gamma(x):=e^{\gamma h_{\ell}(x)-\frac{\gamma^2}{2}\mathbb{E}[h_{\ell}(x)^2]}d\mu(x).
	\end{equation}
%\end{definition}
%Note that $h_{K,\ell}$ is by construction a (centered) Gaussian field, whence the distribution of $h$ is fully determined by its covariance structure.
%and they are in particular subcritical GMCs.
Note that $h_{\ell}$ is by construction a (centered) Gaussian field, whence the distribution of its is fully determined by its covariance structure.
In particular, we show that $\nu_{\ell}^\gamma(K)$ is a non-negative $L^2(\Omega)$-bounded martingale for $|\gamma|<\sqrt{d_H/C_0}$.

\begin{proposition}\label{P:measure_bdd_martingale}
    Let $|\gamma|<\sqrt{d_H/C_0}$. For any $u\in C_b(K)$, the sequence $\{\nu_{\ell}^\gamma(u)\}_{\ell\geq 1}$ given by
    \begin{equation*}%\label{E:measure_bdd_martingale}
        \nu_{\ell}^\gamma(u):=\int_K u\,d\nu_{\ell}^\gamma,\qquad \ell\geq 1,
    \end{equation*}
    is an $L^2(\Omega)$-bounded martingale with respect to the filtration $\mathcal{F}=\{\mathcal{F}_\ell\}_{\ell\geq 1}$, where
    \begin{equation*}
        \mathcal{F}_\ell:=\sigma(h_\ell),\qquad\ell\geq 1.
    \end{equation*}
\end{proposition}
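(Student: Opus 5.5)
The proof has two parts: the martingale property, which comes from the independent-increment structure of $h_\ell$, and the uniform $L^2$ bound, which reduces via Fubini to a Riesz-type energy integral controlled by Proposition~\ref{P:log_correlation_kernel}.

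\emph{Martingale property.} Write $h_{\ell+1}=h_\ell+(h_{\ell+1}-h_\ell)$, where the increment $\sum_{k=N_\ell+1}^{N_{\ell+1}}\xi_k\lambda_k^{-d_S/4}\varphi_k$ is a centered Gaussian field independent of $\mathcal{F}_\ell$. Its variance at $x$ is
\[
\mathbb{E}[h_{\ell+1}(x)^2]-\mathbb{E}[h_\ell(x)^2].
\]
The eigenfunctions are continuous, hence bounded, and only finitely many enter each $h_\ell$. This gives the integrability needed for Fubini, and conditional Fubini yields
\[
\mathbb{E}[\nu^\gamma_{\ell+1}(u)\mid\mathcal{F}_\ell]=\int_K u(x)\,e^{\gamma h_\ell(x)-\frac{\gamma^2}{2}\mathbb{E}[h_\ell(x)^2]}\,\mathbb{E}\bigl[e^{\gamma(h_{\ell+1}-h_\ell)(x)-\frac{\gamma^2}{2}\mathrm{Var}((h_{\ell+1}-h_\ell)(x))}\bigr]\,d\mu(x)=\nu^\gamma_\ell(u).
\]
The last equality uses the Gaussian moment generating function, which makes the inner expectation equal to $1$. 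Adaptedness is immediate. I will work with $\sigma(\xi_1,\dots,\xi_{N_\ell})$ as the filtration. Conditioning on the smaller filtration $\sigma(h_\ell)$ then follows by the tower property, since $\nu^\gamma_\ell(u)$ is $\sigma(h_\ell)$-measurable.

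\emph{$L^2$-boundedness.} By Fubini and the Gaussian computation,
\[
\mathbb{E}[\nu^\gamma_\ell(u)^2]=\int_K\int_K u(x)u(y)\,e^{\gamma^2k_{d_S,\ell}(x,y)}\,d\mu(x)\,d\mu(y)\le\|u\|_\infty^2\int_K\int_K e^{\gamma^2k_{d_S,\ell}(x,y)}\,d\mu\,d\mu.
\]
So it suffices to bound $k_{d_S,\ell}(x,y)\le C_0\bigl(1+\log\frac1{d(x,y)}\bigr)+C$ uniformly in $\ell$. Then the integrand is at most $C'd(x,y)^{-\gamma^2C_0}$, and $\int\!\!\int d(x,y)^{-\alpha}\,d\mu\,d\mu<\infty$ for $\alpha<d_H$ by Ahlfors $d_H$-regularity of $\mu$. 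To see the integrability, decompose dyadically: $\mu\otimes\mu(d(x,y)\sim2^{-m})\lesssim2^{-md_H}$, so the integral is at most a constant times $\sum_m2^{m(\alpha-d_H)}<\infty$. Since $\gamma^2C_0<d_H$ by Remark~\ref{R:gamma_range}, this gives the claim.

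\emph{Main obstacle.} The hard step is the uniform-in-$\ell$ logarithmic bound for the truncated kernel $k_{d_S,\ell}$, because Proposition~\ref{P:log_correlation_kernel} is stated only for the full kernel $k_{d_S}$. I would first try the spectral representation
\[
k_{d_S,\ell}(x,y)=\frac{1}{\Gamma(d_S/2)}\int_0^\infty t^{d_S/2-1}\sum_{k\le N_\ell}e^{-\lambda_kt}\varphi_k(x)\varphi_k(y)\,dt.
\]
This lets me bound the truncation directly. For $t\ge\Lambda_\ell^{-1}$, use the heat kernel as in Proposition~\ref{P:log_correlation_kernel}. For the truncated part on $t\le\Lambda_\ell^{-1}$, use $\sum_{k\le N_\ell}|\varphi_k(x)\varphi_k(y)|\le\sum_{k\le N_\ell}\varphi_k(x)^2$. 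This last sum is at most a constant times $N_\ell\sim\Lambda_\ell^{d_S/2}$, using $\|\varphi_k\|_\infty^2\lesssim\lambda_k^{d_S/2}$ from ultracontractivity $\|P_t\|_{1\to\infty}\lesssim t^{-d_S/2}$. This yields a contribution $O(1)$ for small $t$, and $O(1+\log\frac1{d(x,y)})$ (indeed also $\lesssim\log\Lambda_\ell$) from the rest.

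Only the constant in front of the logarithm matters for the integrability threshold. If it degrades beyond $C_0$, I would fall back on a bound of the form $k_{d_S,\ell}\le k_{d_S}+C$ via the positivity of the heat kernel tail. Alternatively, I would accept the resulting constant in the admissible range of $\gamma$.
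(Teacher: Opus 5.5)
\textbf{Overall structure.} Your route is the same as the paper's. The martingale property comes from the independent increment $Y_{\ell,\ell+1}$ and the Gaussian moment generating function. The $L^2$ bound comes from Fubini, a logarithmic upper bound on $k_{d_S,\ell}$, and the Ahlfors-regularity integrability of Lemma~\ref{lemma: integrability}. The paper applies Proposition~\ref{P:log_correlation_kernel} directly to $k_{d_S,\ell}$, although that proposition concerns only the full kernel $k_{d_S}$. You are right that a bound uniform in $\ell$ is the real crux. However, your sketch of that bound has genuine gaps as written.

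\textbf{Problems in the small-time and large-time pieces.}
\begin{enumerate}
\item The inequality $\sum_{k\le N_\ell}|\varphi_k(x)\varphi_k(y)|\le\sum_{k\le N_\ell}\varphi_k(x)^2$ is false; you need Cauchy--Schwarz.
\item Summing the individual bounds $\|\varphi_k\|_\infty^2\lesssim\lambda_k^{d_S/2}$ over $k\le N_\ell$ gives only $N_\ell\Lambda_\ell^{d_S/2}\sim N_\ell^2$, not $N_\ell$. These bounds cannot be improved in general on $K$, because of localized eigenfunctions. With them, the piece $t\le\Lambda_\ell^{-1}$ would be of order $N_\ell$ rather than $O(1)$. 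What you need is the on-diagonal bound for the whole sum. Since $\lambda_k\le\Lambda_\ell$ for $k\le N_\ell$, one has $\sum_{k\le N_\ell}\varphi_k(x)^2\le e\,p_{1/\Lambda_\ell}(x,x)\lesssim\Lambda_\ell^{d_S/2}$, and then this piece is indeed $O(1)$.
\item For $t\ge\Lambda_\ell^{-1}$ the integrand is the truncated sum, not $p_t(x,y)$. So ``use the heat kernel'' requires controlling the remainder $R_t(x,y)=\sum_{k>N_\ell}e^{-\lambda_k t}\varphi_k(x)\varphi_k(y)$, which has no sign.
\item Your fallback via ``positivity of the tail'' fails for the same reason. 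The kernel $k_{d_S}-k_{d_S,\ell}$ is positive semidefinite, but it is not pointwise nonnegative off the diagonal.
\end{enumerate}

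\textbf{Correct estimate for the large-time piece.} Use $\lambda_k\ge\Lambda_\ell$ for $k>N_\ell$ together with Cauchy--Schwarz:
\[
|R_t(x,y)|\le e^{-\Lambda_\ell t/2}\sqrt{p_{t/2}(x,x)\,p_{t/2}(y,y)}\lesssim e^{-\Lambda_\ell t/2}\bigl(t^{-d_S/2}+1\bigr).
\]
Its integral against $t^{d_S/2-1}\,dt$ over $[\Lambda_\ell^{-1},\infty)$ is bounded uniformly in $\ell$. Meanwhile $\int_{\Lambda_\ell^{-1}}^\infty t^{d_S/2-1}p_t(x,y)\,dt\le\Gamma(d_S/2)\,k_{d_S}(x,y)$, because $p_t\ge 0$.

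\textbf{Conclusion.} Combining the pieces gives $k_{d_S,\ell}(x,y)\le k_{d_S}(x,y)+C$ with $C$ independent of $\ell,x,y$. So the constant $C_0$ in front of the logarithm is preserved, and the stated range $|\gamma|<\sqrt{d_H/C_0}$ is exactly what is needed. Your other fallback, accepting a worse constant, would prove the proposition only on a smaller range of $\gamma$.
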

  
\begin{proof}
    Let $u\in C_b(K)$. We first show that $\nu_{\ell}^\gamma(u)\in L^1(\Omega)$ for all $\ell\geq 1$: Since $h_{\ell}(x)$ is a centered Gaussian random variable with variance $k_{d_S,\ell}(x,x)$, %i.e. $h_{K,\ell}(x)\sim N(0,\mathbb{E}[h_{K,\ell}^2(x)])$ and
    its moment generating function %(Laplace transform)  
    equals
    \begin{equation}\label{E:mgf_approx_h_K}
        \mathbb{E}[e^{\gamma h_{\ell}(x)}]=e^{\frac{\gamma^2}{2}\mathbb{E}[h_{\ell}^2(x)]}=e^{\frac{\gamma^2}{2}k_{d_S,\ell}(x,x)},
    \end{equation} 
    where the last equality follows from~\eqref{E:def_cov_approx_RF_SG}. Thus,
    \begin{equation*}%\label{E:expectation_approx_measure}
    \mathbb E[|\nu_{\ell}^\gamma(u)|]\leq \int_K |u(x)|\mathbb{E}[e^{\gamma h_{\ell}(x)}]e^{-\frac{\gamma^2}2k_{d_S,\ell}(x,x)}\, d\mu(x)=\int_K|u|\, d\mu<\infty.
    \end{equation*}
    To show the martingale property, we rewrite
    \begin{align*}
    d\nu_{\ell+1}^\gamma(u)(x)=Y_{\ell,\ell+1}(x)\, d\nu_{\ell}^\gamma(u)(x),
    \end{align*}
    with 
    \begin{equation*}
    Y_{\ell,\ell+1}(x)=e^{\gamma(h_{\ell+1}-h_{\ell})(x)-\frac{\gamma^2}2(k_{d_S,\ell+1}-k_{d_S,\ell})(x,x)}.
    \end{equation*}
    Note now that
    \begin{align*}
    \mathbb E[Y_{\ell,\ell+1}(x)\,|\,\mathcal{F}_\ell]=\mathbb E[Y_{\ell,\ell+1}(x)]=1,
    \end{align*}
    where the last equality follows analogously to~\eqref{E:mgf_approx_h_K}. Thus, 
    \begin{equation*}%\label{E:Martingale_prop}
        \begin{aligned}
        \mathbb E[\nu_{\ell+1}^\gamma(u)\,|\,\nu_{{\ell}}^\gamma(u)]
        &=\int_K u(x)\,\mathbb E[Y_{\ell,\ell+1}(x)\,|\,\nu_{\ell}^\gamma(u)]\, d\nu_{\ell}^\gamma(u)(x)\\
        &=\int_K u(x)\, d\nu_{\ell}^\gamma(u)(x)=\nu_{\ell}^\gamma(u),
        \end{aligned}
    \end{equation*}
    whence $\{\nu_{\ell}^\gamma(u)\}_{\ell\geq 1}$ is a martingale.

    \medskip

    To show the $L^2(\Omega)$-boundedness, we prove that
    \begin{equation*}%\label{E:Doobs_requires}
        \sup_{\ell\geq 1}\mathbb{E}[\nu_{\ell}^\gamma(u)^2]<\infty.
    \end{equation*}
    Applying Fubini,~\eqref{E:cov_FGFF} and~\eqref{E:mgf_approx_h_K}, we obtain
    \begin{equation*}
        \begin{aligned}
            \mathbb{E}[\nu_{\ell}^\gamma(u)^2]
            &=\mathbb{E}\bigg[\Big(\int_K u\,d\nu_{\ell}^\gamma\Big)^2\bigg]\\
            &=\mathbb{E}\bigg[\int_K\int_Ku(x)u(y)e^{\gamma\big(h_{\ell}(x)+h_{\ell}(y)\big)-\frac{\gamma^2}{2}\mathbb{E}[h_{\ell}(x)^2+h_{\ell}(y)^2]}d\mu(y)\,d\mu(x)\bigg]\\
            &=\int_K\int_Ku(x)u(y)e^{-\frac{\gamma^2}{2}\mathbb{E}[h_{\ell}(x)^2+h_{\ell}(y)^2]}\mathbb{E}\Big[e^{\gamma\big(h_{\ell}(x)+h_{\ell}(y)\big)}\Big]d\mu(y)\,d\mu(x)\\
            &=\int_K\int_Ku(x)u(y)e^{-\frac{\gamma^2}{2}(k_{d_s,\ell}(x,x)+k_{d_s,\ell}(y,y))}\mathbb{E}\Big[e^{\gamma\big(h_{\ell}(x)+h_{\ell}(y)\big)}\Big]d\mu(y)\,d\mu(x)\\
            &=\int_K\int_Ku(x)u(y)e^{-\frac{\gamma^2}{2}(k_{d_s,\ell}(x,x)+k_{d_s,\ell}(y,y))+\frac{\gamma^2}{2}\big(k_{d_s,\ell}(x,x)+k_{d_s,\ell}(y,y)+2k_{d_s,\ell}(x,y)\big)}\Big]d\mu(y)\,d\mu(x)\\
            &=\int_K\int_Ku(x)u(y)e^{\gamma^2k_{d_s,\ell}(x,y)}d\mu(y)\,d\mu(x).
        \end{aligned}
    \end{equation*}
    % where the second last equality is due to the fact that $h_\ell(x)\sim N(0,\lambda_k^{-d_S/2}\varphi_k^2(x))$ and thus
    % \begin{equation*}
    %     \mathbb{E}\Big[e^{\gamma(h_\ell(x)+h_\ell(y))}\Big]
    %     =e^{\frac{\gamma^2}{2}\big(k_\ell(x,x)+k_\ell(y,y)+2k_\ell(x,y)\big)}.
    % \end{equation*}
    Since $u$ is bounded and $K$ is compact, the above estimate, Proposition~\ref{P:log_correlation_kernel} and Lemma~\ref{lemma: integrability} below finally yield
    \begin{equation*}%\label{E:L2_bounded_approx}
        \begin{aligned}
            \mathbb{E}\bigg[\Big(\int_K u\,d\nu_{\ell}^\gamma\Big)^2\bigg]
            &\leq \|u\|_{L^\infty}^2\int_K\int_K e^{\gamma^2|k_{d_s,\ell}(x,y)|}d\mu(y)\,d\mu(x)\\
            &\leq \|u\|_{L^\infty}^2\int_K\int_K e^{\gamma^2C(1-\log d(x,y))}d\mu(y)\,d\mu(x)\\
            &= \|u\|_{L^\infty}^2e^{\gamma^2C}\int_K\int_K\frac{1}{d(x,y)^{\gamma^2C}}d\mu(y)\,d\mu(x)\\
            &\leq \tilde{C} \|u\|_{L^\infty}^2.
        \end{aligned}
    \end{equation*}
    %which is uniformly bounded by virtue of since $|\gamma|<\sqrt{d_H/C}$.
\end{proof}

The following estimate used in the previous proof appears in~\cite{BC23} and its proof is included here for completeness.
\begin{lemma}\label{lemma: integrability}
	Let $|\gamma|<\sqrt{d_H/C_0}$. Then,
	\begin{equation*}
		\int_K\int_K\frac{1}{d(x,y)^{\gamma^2C}}d\mu(y)\,d\mu(x)<\infty.
	\end{equation*}
\end{lemma}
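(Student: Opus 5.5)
The plan is to reduce the double integral to a one-variable estimate using the Ahlfors regularity of $\mu$. The constant $C$ in the exponent is the constant $C_0$ from Proposition~\ref{P:log_correlation_kernel}, so $\alpha:=\gamma^2C_0<d_H$ under the hypothesis $|\gamma|<\sqrt{d_H/C_0}$. By Fubini and the fact that $\mu(K)=1$, it suffices to show
\begin{equation*}
\sup_{x\in K}\int_K d(x,y)^{-\alpha}\,d\mu(y)<\infty .
\end{equation*}

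For fixed $x\in K$, decompose $K$ into dyadic annuli $A_m(x):=\{y\in K\colon 2^{-m-1}<d(x,y)\leq 2^{-m}\}$ for $m\geq 0$. Since the diameter of $K$ is one, these annuli cover $K\setminus\{x\}$, and the diagonal has $\mu\otimes\mu$-measure zero because $\mu$ has no atoms.

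The key geometric input is the upper volume bound $\mu(B(x,r))\leq c\,r^{d_H}$ for all $x\in K$ and $r\in(0,1]$. To prove it, choose $m$ with $2^{-m-1}<r\leq 2^{-m}$. A ball of radius $r$ centered in $K$ meets at most a uniformly bounded number $M$ of $m$-cells $F_w(K)$ with $|w|=m$. This holds because the $m$-cells have side length $2^{-m}$, are arranged as the triangles of the level-$m$ graph, and so only boundedly many lie within distance $2^{-m}$ of a given point. Each such cell has mass $3^{-m}=(2^{-m})^{d_H}$, which gives $\mu(B(x,r))\leq M3^{-m}\leq M2^{d_H}r^{d_H}$. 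This step is essentially the standard comparability of $\mu$ with the Hausdorff measure mentioned in Section~\ref{SS:SG_basics}, and it is the only point needing care. Even so, I expect it to be routine: I would simply cite it, or argue via the cell count as above.

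Summing over the annuli then gives
\begin{equation*}
\int_K d(x,y)^{-\alpha}\,d\mu(y)\leq\sum_{m\geq0}2^{(m+1)\alpha}\mu\big(B(x,2^{-m})\big)\leq c\,2^{\alpha}\sum_{m\geq0}2^{-m(d_H-\alpha)}.
\end{equation*}
This geometric series converges precisely because $\alpha<d_H$, and the bound is uniform in $x$. Integrating over $x$ against the probability measure $\mu$ concludes the proof.
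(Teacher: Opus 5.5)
Your proof is correct and follows essentially the same route as the paper: dyadic annuli around $x$, the upper Ahlfors bound $\mu(B(x,r))\lesssim r^{d_H}$, and convergence of the geometric series $\sum_m 2^{-m(d_H-\gamma^2C_0)}$. The only differences are that you justify the volume bound by counting level-$m$ cells (the paper simply cites Ahlfors regularity) and that you state explicitly that the constant $C$ in the exponent is $C_0$.
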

%\pat{(Don't think this is an iff)}
\begin{proof}
	For any $r>0$ and $x\in K$, let $B(x,r)$ denote the ball of radius $r$ centered at $x$. Using a standard integral decomposition argument and the $d_H$-Ahlfors regularity of $\mu$,
	\begin{align*}
		\int_K\int_K\frac{1}{d(x,y)^{\gamma^2C_0}}d\mu(y)\,d\mu(x)
		&\leq \int_K\sum_{k=0}^\infty\int_{B(x,2^{-k}R_K){\setminus}B(x,2^{-k-1}R_K)}\frac{1}{d(x,y)^{\gamma^2C_0}}d\mu(y)\,d\mu(x)\\
		&\leq \int_K\sum_{k=0}^\infty\int_{B(x,2^{-k}){\setminus}B(x,2^{-k-1})}(2^{k+1})^{\gamma^2C_0}d\mu(y)\,d\mu(x)\\
		&\leq \int_K\sum_{k=0}^\infty(2^{k+1})^{\gamma^2C_0}\mu(B(x,2^{-k}))\,d\mu(x)\\
		&\apprle\int_K\sum_{k=0}^\infty(2^{k+1})^{\gamma^2C_0}(2^{-k})^{d_H}d\mu(x)\\
		&\apprle \mu(K)2^{\gamma^2C}\sum_{k=0}^\infty 2^{-k(d_H-\gamma^2C_0)}.
	\end{align*}
	The latter series converges if and only if $d_H-\gamma^2C_0>0$, which is the case since by assumption $|\gamma|<\sqrt{d_H/C_0}$.
\end{proof}

%%%%--------------------------------------------------------------
%%%%--------------------------------------------------------------
\subsection{Liouville Quantum Gravity measure}\label{SS:LQGM}

%As already mentioned in the introduction, the classical approach to construct LQG measure via Gaussian multiplicative chaos due Kahane~\cite{Kah85} requires the covariance kernel of the underlying field to be decomposable into a sum of non-negative kernels. This condition is hard to check in practice.
% \medskip
% Rhodes and Vargas used in~\cite{RV14} approximating fields defined by convolution with appropriate mollifiers. The obtained random measure is unique in law, but not as a function of the underlying field. 
% A related construction using circle averages was proposed by Duplantier and Sheffield~\cite{DS11} in the special case of a Gaussian free field in two dimensions. An alternative approach based on thick points of the underlying field can be found in~\cite{Ber17}.

%\medskip
Next, we give the definition of the Liouville quantum gravity measure.
Here, we use the characterization through the transformation rule \eqref{eq: trans} established in~\cite{Sha16}. %and had already been pointed out in~\cite{ST94}.

%\eva{In Konsistenz zu der Defintion FGFF im folgenden auch eine Korrektur.}
\begin{definition}%\label{D:def_LQGM_on_K}
  We call a random measure $\nu^\gamma$ on $K$ a \emph{Liouville Quantum Gravity measure} (LQG measure) if
   \begin{equation}\label{eq: trans}
   \mathbb E\left[\int_K F(h,x)\, d\nu^\gamma(x)\right]=\mathbb{E}\left[\int_KF(h+\gamma k_{d_S}(x,\cdot),x)\, d\mu(x)\right]
   \end{equation}
   for every nonnegative measurable functional $F$ on $H^{-\varepsilon}(K)\times K$.
\end{definition}

\begin{theorem}\label{T: LQGM}
    For any $|\gamma|<\sqrt{d_H/C_0}$, there exists a unique LQG measure $\nu^\gamma$ on $K$ such that \begin{equation}\label{E:approx_expected_measure}
    \mathbb{E}[\nu^\gamma(E)]=\mu(E),
    \end{equation}
    for any $E\subseteq K$. In addition, $\nu_\ell^\gamma$ converges weakly to $\nu^\gamma$ in the sense of Borel measures on $K$ in $L^2(\Omega)$, i.e.
    \begin{equation*}%\label{E:measure_L1_conv}
    \nu^\gamma_{\ell}(u)\xrightarrow[\ell\to\infty]{L^2(\Omega)}{\nu}^\gamma(u)
    \end{equation*}
    for all $u\in C_b(K)$.  
\end{theorem}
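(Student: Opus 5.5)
The plan is to build $\nu^\gamma$ as the limit of the martingales from Proposition~\ref{P:measure_bdd_martingale}, then identify it through Shamov's transformation rule~\eqref{eq: trans}. Uniqueness will come from the rule itself.

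\textbf{Step 1: existence of the limit.} Fix a countable set $\{u_m\}\subset C(K)$ that is dense in $C(K)$. For each $u\in C_b(K)$, Proposition~\ref{P:measure_bdd_martingale} says that $\nu_\ell^\gamma(u)$ is an $L^2(\Omega)$-bounded martingale. Doob's theorem then gives a limit $L(u)$, both $\mathbb P$-a.s. and in $L^2(\Omega)$. Taking $u\equiv1$ shows that the total masses $\nu_\ell^\gamma(K)$ converge and are a.s. uniformly bounded in $\ell$. Since $K$ is compact, on a full-measure event the measures $\{\nu_\ell^\gamma\}$ are tight. Every subsequential weak limit $\nu$ satisfies $\nu(u_m)=L(u_m)$ for all $m$, so by density all subsequential limits coincide. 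Call the common limit $\nu^\gamma$; we then have $\nu_\ell^\gamma\to\nu^\gamma$ weakly a.s.

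To get the $L^2(\Omega)$ convergence for general $u$, use $|\nu_\ell^\gamma(u)-\nu_\ell^\gamma(u_m)|\le\|u-u_m\|_\infty\nu_\ell^\gamma(K)$, the uniform $L^2$ bound on $\nu^\gamma_\ell(K)$, and an $\varepsilon/3$ argument.

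For~\eqref{E:approx_expected_measure}: $L^2$ convergence gives $\E[\nu^\gamma(u)]=\lim_\ell\E[\nu^\gamma_\ell(u)]=\int u\,d\mu$, because $\E\big[e^{\gamma h_\ell(x)-\frac{\gamma^2}{2}k_{d_S,\ell}(x,x)}\big]=1$. A monotone class argument then extends this from continuous $u$ to Borel sets $E$.

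\textbf{Step 2: the transformation rule.} First take $F(h,x)=e^{\langle h,f\rangle}g(x)$ with $f\in\mathcal S(K)$ and $g\in C(K)$, and check~\eqref{eq: trans} at level $\ell$. By the Cameron--Martin theorem (Girsanov for a finite Gaussian vector), weighting by $e^{\gamma h_\ell(x)-\frac{\gamma^2}{2}k_{d_S,\ell}(x,x)}$ shifts the law of $h$ by $\gamma k_{d_S,\ell}(x,\cdot)$, so
\[
\E\Big[\int e^{\langle h,f\rangle}g\,d\nu_\ell^\gamma\Big]=\int g(x)\,\E\big[e^{\langle h+\gamma k_{d_S,\ell}(x,\cdot),f\rangle}\big]\,d\mu(x).
\]
Now let $\ell\to\infty$. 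On the right, $\langle k_{d_S,\ell}(x,\cdot),f\rangle=\sum_{k\le N_\ell}\lambda_k^{-d_S/2}\varphi_k(x)\langle\varphi_k,f\rangle$, which converges to $\langle k_{d_S}(x,\cdot),f\rangle$ uniformly in $x$ thanks to the rapid decay of the coefficients of $f\in\mathcal S(K)$. On the left, $e^{\langle h,f\rangle}\in L^2(\Omega)$ and $\nu_\ell^\gamma(g)\to\nu^\gamma(g)$ in $L^2(\Omega)$, so Cauchy--Schwarz lets us pass to the limit. The functionals $e^{\langle h,f\rangle}g(x)$ form a multiplicative class that generates the Borel $\sigma$-algebra on $H^{-\varepsilon}(K)\times K$, so the monotone class theorem extends the identity to all nonnegative measurable $F$.

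\textbf{Step 3: uniqueness.} This is the step I expect to be most delicate. I would invoke Shamov's theorem~\cite{Sha16}: a random measure measurable with respect to $h$ and satisfying~\eqref{eq: trans} is determined a.s. by $h$. A direct route is also available. If $\nu,\nu'$ both satisfy the rule, then choosing $F(h,x)=G(h)g(x)$ shows that $\E[G(h)\nu(g)]=\E[G(h)\nu'(g)]$ for all bounded $G$. This gives $\E[\nu(g)\,|\,h]=\E[\nu'(g)\,|\,h]$, hence equality whenever both measures are $\sigma(h)$-measurable. Our $\nu^\gamma$ is $\sigma(h)$-measurable by construction.

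The main technical obstacle is justifying the passage to the limit in Step 2 at the level of $H^{-\varepsilon}$-valued arguments. This requires checking that $\langle h,f\rangle$ is the a.s. limit of $\langle h_\ell,f\rangle$, which is Theorem~\ref{T:convergence}(ii), and controlling the exponential moments uniformly.
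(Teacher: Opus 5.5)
Your proof is correct, but it is more self-contained than the paper's. The paper obtains the limit from $L^2$ martingale convergence plus Riesz--Markov--Kakutani, and writes the Cameron--Martin identity at level $\ell$ for the pair $(h_\ell,\nu^\gamma_\ell)$. It then leaves the rest to Shamov:
\begin{itemize}
\item \cite[Theorem 4]{Sha16} makes each $\nu^\gamma_\ell$ a GMC;
\item uniform integrability of $\nu^\gamma_\ell(K)$ together with \cite[Theorem 3]{Sha16} makes the limit a GMC;
\item \cite[Theorem 4]{Sha16} converts this back into \eqref{eq: trans};
\item uniqueness is \cite[Corollary 5]{Sha16}.
\end{itemize}

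You depart from this in three places.
(a) Your countable-dense-family and tightness argument actually constructs the random measure, with a.s.\ weak convergence. This is exactly what makes the paper's appeal to Riesz--Markov--Kakutani rigorous, since each $L^2$-limit $L(u)$ is only defined up to a null set.
(b) You test $\nu^\gamma_\ell$ against functionals of the \emph{full} field $h$, so Gaussian tilting shifts $h$ itself by $\gamma k_{d_S,\ell}(x,\cdot)$. The limit $\ell\to\infty$ then reduces to Cauchy--Schwarz plus uniform convergence of $\langle k_{d_S,\ell}(x,\cdot),f\rangle$, and no convergence theorem for GMCs is needed.
(c) Your uniqueness argument via $\E[\nu(g)\,|\,h]=\E[\nu'(g)\,|\,h]$ makes explicit a hypothesis the paper's definition omits. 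Uniqueness can only hold among $\sigma(h)$-measurable random measures: for bounded centered $Z$ independent of $h$ and small $\varepsilon$, the measure $(1+\varepsilon Z)\nu^\gamma$ also satisfies \eqref{eq: trans}. Shamov's notion of GMC builds this measurability in.

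The paper's route is shorter and inherits the robustness of Shamov's framework, such as independence of the approximation scheme. Yours uses only Proposition~\ref{P:measure_bdd_martingale}, Gaussian tilting, and measure theory.

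Two details need tightening.
\begin{itemize}
\item The multiplicative-class monotone class theorem is for bounded functions, and $e^{\langle h,f\rangle}$ is unbounded. Either run Step 2 with $e^{i\langle h,f\rangle}$, for which the same tilting computation holds. Or note that both sides of \eqref{eq: trans} are finite measures of mass $\mu(K)$ on $H^{-\varepsilon}(K)\times K$, whose Laplace transforms agree on the finite-dimensional projections $(\langle h,\varphi_1\rangle,\dots,\langle h,\varphi_n\rangle,x)$.
\item The uniform convergence in $x$ needs a sup-norm bound such as $\|\varphi_k\|_\infty\lesssim\lambda_k^{d_S/4}$. This follows from \eqref{E:subG_HKE} via $\varphi_k=e^{\lambda_k t}P_t\varphi_k$ with $t=1/\lambda_k$. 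The terms of the series are then bounded by a constant times $|\langle\varphi_k,f\rangle|$, which is summable for $f\in\mathcal S(K)$.
\end{itemize}
Finally, your $\varepsilon/3$ step is redundant: Doob already gives $L^2$ convergence for each $u$, and a.s.\ weak convergence identifies the limit with $\nu^\gamma(u)$.
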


\begin{proof}
The uniqueness follows immediately from \cite[Corollary 5]{Sha16}. 

   %The uniqueness directly follows from~\eqref{eq: trans}. Indeed, if there is one, then the RHS of~\eqref{eq: trans} does not depend on $\nu^\gamma$ and it is a well-defined number for any $F$. Thus, the LHS characterizes a probability measure on $K\times H^{-\varepsilon}(K)$ and in particular it characterizes the probability measure $\frac{1}{\nu^\gamma(K)}\nu^\gamma$ on $K$ by disintegration. %Finally, the volume $\nu_\gamma(K)$ is characterized by taking $F(h,x)=F(h)$.

   \medskip
   
   The existence follows by approximation: By Proposition~\ref{P:measure_bdd_martingale}, the sequence $\{\nu_{\ell}^\gamma(u)\}_{\ell\geq 1}$ is an $L^2$-bounded martingale for any $u\in C_b(K)$. The $L^p$-convergence theorem for martingales (with $p=2$), see e.g.~\cite[Theorem 11.4]{Kle14}, guarantees the existence of a limit in $L^2(\Omega)$. The convergence in the sense of Borel measures follows then by the Riesz-Markov-Kakutani representation theorem.
   %and $\mathbb{P}$-a.s. which we call $\nu^\gamma(u)$. It follows that $\{\nu_{\ell}^\gamma(u)\}_{\ell\geq 1}$ is uniformly integrable, see e.g.~\cite[Corollary 6.21]{Kle14}, and thus~\eqref{E:measure_L1_conv} holds. %\pat{(how does convergence in the space of finite measures hold?)} 
   
   \medskip
   
   %We call this limit $\nu^\gamma(u)$ and note first that, as measure, $\nu^\gamma(B)=\nu^\gamma (\mathbf{1}_B)$ for all $B\subseteq K$ is non-trivial $\mathbb{P}$-a.s., see Corollary~\ref{C: support}. 
   
   Moreover, for each $\ell\geq 1$ and in view of~\eqref{E:def_approx_LQM}, the measure $\nu^\gamma_\ell$ is absolutely continuous with respect to $\mu$ and has Radon-Nikodym derivative $e^{\gamma h_\ell(x)-\frac{\gamma^2}2k_{d_S,\ell}(x,x)}$ for $\mu$-a.e. $x\in K$. 
   By the Cameron-Martin theorem, the Radon-Nikodym derivative is precisely the density of the law of the shifted field $h_\ell+\gamma k_{d_S,\ell}(x,\cdot)$ and thus
   \begin{equation*}
   \mathbb E\left[\int_K F(h_\ell,x)\, d\nu_{\ell}^\gamma(x)\right]=\mathbb E\left[\int_KF(h_\ell+\gamma k_{d_S,\ell}(x,\cdot),x)\, d\mu(x)\right]
   \end{equation*}
   for any positive measurable functional $F$ on $H^{-\eps}(K)\times K$. %\eva{The Cameron-Martin space is here the Sobolev space $H^{d_S/2}$. Then it holds that $\|k_\ell(x,\cdot)\|^2=k_\ell(x,x)$ and $\langle h_\ell,\gamma k_\ell(x,\cdot)\rangle=\gamma h_\ell(x)$. Thus
   %\begin{align*}
   %&\mathbb E\left[\int_K F(h_\ell,x)\, d\nu_{\ell}^\gamma(x)\right]=\int_K\mathbb E\left[ F(h_\ell,x)e^{\gamma h_\ell(x)-\frac{\gamma^2}2k_{d_S,\ell}(x,x)}\right]\, d\mu(x)\\
   %=&\mathbb E\left[\int_KF(h_\ell,x)e^{\langle h_\ell,\gamma k_\ell(x,\cdot)\rangle-\frac{\gamma^2}2\|k_\ell(x,\cdot)\|^2}\, d\mu(x)\right]
  %=\int_K \mathbb E\left[F(h_\ell+\gamma k_{d_S,\ell}(x,\cdot),x)\right]\, d\mu(x).    
   %\end{align*}
   %}
   In particular, see~\cite[Theorem 4]{Sha16}, each $\nu^\gamma_\ell$ is a subcritical GMC with expectation $\mu$. Since in addition the sequence $\{\nu^\gamma_\ell(K)\}_{\ell\geq 1}$ is uniformly integrable, it follows from~\cite[Theorem 3]{Sha16} that $\nu^\gamma$ is also a GMC, which by~\cite[Theorem 4]{Sha16} satisfies~\eqref{eq: trans}.
   %Finally, since $\{\nu_{{\ell}}^\gamma\}_{\ell\geq 1}$ converges in the space of finite measures on $K$ to $\nu^\gamma$, then~\eqref{eq: trans} holds, see~\cite[Lemma 2.1]{Aru20}. 
    %
    % Since the event $\{\widetilde{\nu}^\gamma(u)=0\}$ is independent of $h_{K,\ell}$ for any $\ell\in\mathbb N$, Kolmogorov's zero-one law implies that is has probability $0$ or $1$. By the $L^2$-convergence we know that it has positive probability and thus the limit is non-zero a.s.
    %
    %By Corollary 5 in~\cite{Sha16} it coincides with the LQGM in Proposition \ref{prop: GMC}.
\end{proof}

\begin{remark}
    In the proof of Theorem \ref{T: LQGM} we rely on the fact that $\{\nu_\ell^\gamma(K)\}_{\ell\geq 1}$ is an $L^2$-bounded martingale for $|\gamma|<\sqrt{d_H/C_0}$. Alternatively, one could prove $\sup_\ell\E[\nu_\ell^\gamma(K)^p]<\infty$ for some $p>1$ depending on $\gamma$. This would imply uniform integrability and hence convergence by Doob's convergence theorem. Such a moment estimate could potentially extend the domain of admissible values for the parameter $\gamma$. A standard way of proving $L^p$-bounds is by controlling the contribution of so-called \emph{thick points}, see \cite{DS11,Ber17}. 
    However, these arguments require precise logarithmic asymptotics of the kernel, which we do not have in this case as the available heat kernel bounds on $K$ only yield two-sided bounds with different constants.
\end{remark}

We record some properties of the LQG measure $\nu^\gamma$. 

\begin{corollary}\label{C:expected_measure}
    Let $|\gamma|<\sqrt{d_H/C_0}$. Then $\nu^\gamma$ is $\mathbb{P}$-a.s. finite and non-trivial.
\end{corollary}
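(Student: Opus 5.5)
Finiteness comes from the expectation identity, and non-triviality from a Kolmogorov zero--one law applied to the limit martingale.

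\emph{Finiteness.} Taking $E=K$ in \eqref{E:approx_expected_measure} gives $\mathbb{E}[\nu^\gamma(K)]=\mu(K)=1<\infty$. Since $\nu^\gamma(K)\geq 0$, it follows that $\nu^\gamma(K)<\infty$ $\mathbb{P}$-a.s. One can also argue directly: by Theorem~\ref{T: LQGM} with $u\equiv 1$, $\nu^\gamma_\ell(K)\to\nu^\gamma(K)$ in $L^2(\Omega)$. Hence $\mathbb{E}[\nu^\gamma(K)]=\lim_\ell\mathbb{E}[\nu^\gamma_\ell(K)]=\mu(K)=1$, where each term equals $1$ by \eqref{E:mgf_approx_h_K}.

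\emph{Non-triviality.} We must show $\mathbb{P}(\nu^\gamma(K)=0)=0$. First, $L^2$-convergence gives
\[
\mathbb{E}[\nu^\gamma(K)^2]\le \sup_\ell \mathbb{E}[\nu_\ell^\gamma(K)^2]<\infty
\quad\text{and}\quad
\mathbb{E}[\nu^\gamma(K)]=1 .
\]
Paley--Zygmund then yields
\[
\mathbb{P}(\nu^\gamma(K)>0)\geq \frac{\mathbb{E}[\nu^\gamma(K)]^2}{\mathbb{E}[\nu^\gamma(K)^2]}>0 .
\]
To upgrade this to probability one, we show $A:=\{\nu^\gamma(K)=0\}$ is a tail event for the i.i.d.\ sequence $\{\xi_k\}_{k\ge1}$. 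Fix $m$. For $\ell\ge m$, write $h_\ell=h_m+(h_\ell-h_m)$ and
\[
d\nu_\ell^\gamma = e^{\gamma h_m(x)-\frac{\gamma^2}{2}k_{d_S,m}(x,x)}\, d\nu^{\gamma,(m)}_\ell(x),
\]
where $\nu^{\gamma,(m)}_\ell$ is built from $h_\ell-h_m$ and therefore depends only on $\xi_k$ with $k>N_m$. The prefactor is a continuous function of $x$ on the compact $K$, since the eigenfunctions are continuous. So it is bounded above and below by strictly positive random constants depending only on $\xi_1,\dots,\xi_{N_m}$.

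The same martingale argument as in Proposition~\ref{P:measure_bdd_martingale} shows $\nu^{\gamma,(m)}_\ell(K)$ converges in $L^2$, after passing to an a.s.\ convergent subsequence, to some $\nu^{\gamma,(m)}(K)$. The $L^2$-bound still holds because its covariance $k_{d_S,\ell}-k_{d_S,m}$ differs from $k_{d_S,\ell}$ by a bounded function. The two-sided comparison survives the limit, so $\{\nu^\gamma(K)=0\}=\{\nu^{\gamma,(m)}(K)=0\}$ up to null sets. The right-hand event lies in $\sigma(\xi_k:k>N_m)$. As $m$ is arbitrary, $A$ is in the tail $\sigma$-algebra, and Kolmogorov's zero--one law gives $\mathbb{P}(A)\in\{0,1\}$. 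Combined with the Paley--Zygmund bound, $\mathbb{P}(A)=0$.

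\emph{Main obstacle.} The delicate point is the tail-measurability step. We must make sure the limits along $\ell$ commute with multiplication by the $h_m$-prefactor, and identify the events only up to null sets, which suffices for the zero--one law. The $L^2$-boundedness of the shifted martingales and the uniform continuity of the finitely many $\varphi_k$ on $K$ are what make this work.
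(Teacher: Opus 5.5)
Your proof is correct and follows the paper's route. Finiteness comes from $\mathbb{E}[\nu^\gamma(K)]=\mu(K)<\infty$, and non-triviality from $\mathbb{P}(\nu^\gamma(K)>0)>0$ upgraded by Kolmogorov's zero--one law, after factoring off the continuous $h_m$-prefactor exactly as in the proof of Corollary~\ref{C: support}. The only deviation is that you invoke Paley--Zygmund, whereas the paper simply notes that a nonnegative variable with mean $\mu(K)>0$ cannot vanish a.s., so the second-moment bound is not needed for that step.
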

\begin{proof}
%Let $E\subseteq K$. We note that by the proof of Theorem \ref{T: LQGM}, the sequence $\{\nu_\ell^\gamma(E)\}_{\ell\geq1}$ is a uniformly integrable martingale converging against $\nu^\gamma(E)$ and in particular
%\begin{align*}
    %\E[\nu^\gamma(E)]=\lim_{\ell\to\infty}\E[\nu^\gamma_\ell(E)]=\mu(E)<\infty.
%\end{align*}
Since~\eqref{E:approx_expected_measure} implies $\E[\nu^\gamma(K)]<\infty$,  it follows $\nu^\gamma(K)<\infty$ $\P$-a.s.
%\eva{The Borel-Cantelli argument is not needed here, see also below.}\pat{ Ah sehr gut, danke :)}
%Moreover the limit $\nu^\gamma(E)$ is in $L^2(\Omega)$ and thus by Tschebychev's inequality %together with the estimate~\eqref{E:L2_bounded_approx} implies
%\begin{equation*}
 %       \mathbb{P}(\nu^\gamma(E)>n)\leq\frac{\mathbb{E}[\nu^\gamma(E)^2]}{n^2}\leq \frac{\tilde{C}}{n^2}.
        %\qquad\Rightarrow\qquad
    %\sum_{n=1}^\infty \mathbb{P}(\nu^\gamma(E)>n)<\infty.
%\end{equation*}
%In particular, the Borel-Cantelli Lemma yields for $E=K$ that $\mathbb{P}(\nu^\gamma(K)<\infty)=1$.
To show that $\nu^\gamma$ is $\mathbb{P}$-a.s. non-trivial, we first note that
\begin{align*}
\mathbb{P}(\nu^\gamma(K)>0)>0.
\end{align*}
Actually,~\eqref{E:approx_expected_measure} gives $\E[\nu^\gamma(K)]=\mu(K)>0$. Since $\nu^\gamma(K)\geq0$ $\P$-a.s., the probability of the event $\{\nu^\gamma(K)>0\}$ must be positive. The latter probability equals one due to Kolmogorov's zero-one law. We shall explain the argument in more generality in the proof of Corollary \ref{C: support} below.
% Taking $u=\mathbf{1}_E$ with $E\subseteq K$ in~\eqref{E:expectation_approx_measure} (where now equality holds since $u\geq 0$) yields~\eqref{E:approx_expected_measure}.
\end{proof}

In fact, one can show that $\nu^\gamma$ has $\mathbb{P}$-a.s. full support. The proof uses Kolmogorov's zero-one law, which is a standard argument in Gaussian multiplicative chaos, see e.g. \cite[Section 2]{RV14}.
\begin{corollary}\label{C: support}
	For any $|\gamma|<\sqrt{d_H/C_0}$, the measure $\nu^\gamma$ has $\mathbb{P}$-a.s. full support.
\end{corollary}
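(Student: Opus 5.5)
Full support means that, $\mathbb{P}$-a.s., every nonempty open set has positive $\nu^\gamma$-mass. Since $K$ is covered by the countably many cells $K_w=F_w(K)$, $w\in\{1,2,3\}^n$, $n\geq1$, and every nonempty open subset of $K$ contains such a cell, it suffices to show $\mathbb{P}(\nu^\gamma(K_w)>0)=1$ for each fixed word $w$; intersecting these countably many almost sure events gives the claim. So fix $w$ and let $A_w:=\{\nu^\gamma(K_w)=0\}$.

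First I show $\mathbb{P}(A_w)<1$. By \eqref{E:approx_expected_measure}, $\mathbb{E}[\nu^\gamma(K_w)]=\mu(K_w)=3^{-n}>0$, and $\nu^\gamma(K_w)\geq0$, so $\nu^\gamma(K_w)>0$ with positive probability. This is the same argument as in Corollary~\ref{C:expected_measure}. If one prefers to use only the $C_b$-convergence from Theorem~\ref{T: LQGM}, I would take a continuous $0\le u\le \mathbf 1_{K_w}$ with $\int u\,d\mu>0$ (for instance supported in a subcell of $K_w$), and use $\mathbb{E}[\nu^\gamma(u)]=\lim_\ell\mathbb{E}[\nu^\gamma_\ell(u)]=\int u\,d\mu>0$, which follows from $L^2$-convergence.

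Second, and this is the main point, I show that $A_w$ is a tail event for the i.i.d.\ sequence $\{\xi_k\}$, so Kolmogorov's zero-one law gives $\mathbb{P}(A_w)\in\{0,1\}$ and hence $\mathbb{P}(A_w)=0$. Fix $m\geq1$. For $\ell\geq m$ write
\[
d\nu_\ell^\gamma(x)=e^{\gamma h_m(x)-\frac{\gamma^2}{2}k_{d_S,m}(x,x)}\,d\tilde\nu^\gamma_{m,\ell}(x),
\qquad d\tilde\nu^\gamma_{m,\ell}(x):=e^{\gamma(h_\ell-h_m)(x)-\frac{\gamma^2}{2}(k_{d_S,\ell}-k_{d_S,m})(x,x)}\,d\mu(x).
\]
Here $\tilde\nu^\gamma_{m,\ell}$ depends only on $\xi_{N_m+1},\xi_{N_m+2},\dots$. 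The arguments of Proposition~\ref{P:measure_bdd_martingale} and Theorem~\ref{T: LQGM} apply verbatim to the truncated field. In particular the covariance $k_{d_S,\ell}-k_{d_S,m}$ differs from $k_{d_S,\ell}$ by the bounded continuous kernel $k_{d_S,m}$, so the same $L^2$ bound holds. Hence $\tilde\nu^\gamma_{m,\ell}$ converges weakly in $L^2(\Omega)$ to a random measure $\tilde\nu^\gamma_m$, measurable with respect to $\sigma(\xi_k:k>N_m)$. The density $g_m:=e^{\gamma h_m-\frac{\gamma^2}{2}k_{d_S,m}(\cdot,\cdot)}$ is a continuous and strictly positive function, since $h_m$ is a finite combination of continuous eigenfunctions. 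Passing to the limit in $\nu^\gamma_\ell(u)=\tilde\nu^\gamma_{m,\ell}(g_mu)$ for $u\in C_b(K)$ gives $d\nu^\gamma=g_m\,d\tilde\nu^\gamma_m$ almost surely. The function $g_m$ is random, but it is $\mathcal F_m$-measurable and continuous; I would handle this either by approximating $g_m$ uniformly by finite sums $\sum_i a_i(\omega)u_i$ with deterministic $u_i$, or by applying the $L^2$-convergence along a subsequence after conditioning on $\mathcal F_m$.

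Because $0<\min_K g_m\le g_m\le\max_K g_m<\infty$, we get $\nu^\gamma(K_w)=0$ if and only if $\tilde\nu^\gamma_m(K_w)=0$. Thus $A_w\in\sigma(\xi_k:k>N_m)$ up to null sets for every $m$, so $A_w$ lies in the tail $\sigma$-algebra. The step I expect to be most delicate is this identification $\nu^\gamma=g_m\tilde\nu^\gamma_m$, which needs care with the random continuous density and with weak convergence tested against indicators of closed cells. To deal with the indicators, I would use that $\nu^\gamma(\partial K_w)=0$ almost surely, which holds because $\mathbb{E}[\nu^\gamma(V)]=\mu(V)=0$ for the finite set $V=F_w(V_0)$, and approximate $\mathbf 1_{K_w}$ monotonically by continuous functions.
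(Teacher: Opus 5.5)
Your proposal is correct and follows essentially the same route as the paper. Both arguments get positivity of $\mathbb{P}(\nu^\gamma(A)>0)$ from $\mathbb{E}[\nu^\gamma(A)]=\mu(A)$. Both factor $\nu^\gamma$ as a continuous, strictly positive $\mathcal{F}_m$-measurable density times a limit measure that depends only on $\{\xi_k\}_{k>N_m}$; the paper phrases this as a two-sided bound using $\inf_A$ and $\sup_A$ of the density. Both then conclude with Kolmogorov's zero-one law. Your explicit reduction to the countably many cells $K_w$, and your care with null sets and with the identification $\nu^\gamma=g_m\tilde\nu^\gamma_m$ for a random density, supply details the paper leaves implicit; in particular, the paper passes from ``each open $A$'' to ``full support a.s.'' without the countable-base step.
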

\begin{proof}
    We first note that~\eqref{E:approx_expected_measure} implies $\mathbb{P}(\nu^\gamma(A)>0)>0$ for every non-empty open set $A\subseteq K$ since $\mu(A)>0$ and $\nu^\gamma$ is non-negative $\mathbb{P}$-a.s.
    %Since $\nu^\gamma(A)\geq0$ $\P$-a.s., the probability of the event $\{\nu^\gamma(A)>0\}$ must be positive.
    %\eva{For Patricia: If $\P[\nu^\gamma(A)>0]=0$, then $\nu^\gamma(A)=0$ $\P$-almost surely. But the expectation $\E[\nu^\gamma(A)]$ would then be zero.}\pat{Thank you got it :)}
   % Suppose that otherwise ${\rm supp}\,\nu^\gamma\subsetneq K$, in which case, there exists an open set $A\subseteq K{\setminus}{\rm supp}\,\nu^\gamma$ and $\mathbb{P}(\nu^\gamma(A)=0)=1$. Since $\nu^\gamma (A) \geq 0$ $\mathbb{P}$-a.s., the latter implies $\mathbb{E}[\nu^\gamma(A)]=0$, which by~\eqref{E:expectation_approx_measure} contradicts $\mu(A)>0$. Thus, $\mathbb{P}(\nu^\gamma(A)>0)>0$ for all open $A\subseteq K$. 
    Now, let us define for any $1\leq \ell\leq m$ the random measure
    \begin{equation*}
        d\mu_{m,\ell}^\gamma(x):=Y_{m,\ell}(x)\, dx,
    \end{equation*}
    where
    \begin{equation*}
        Y_{m,\ell}(x):=e^{\gamma(h_{m}-h_{\ell})(x)-\frac{\gamma^2}2(k_{d_S,m}-k_{d_S,\ell})(x,x)}.
    \end{equation*}
    By virtue of Theorem~\ref{T: LQGM}, for each fixed $\ell\geq 1$, the sequence $\{\mu_{m,\ell}^\gamma\}_{m\geq \ell}$ converges to a random measure $\mu_\ell^\gamma$ as $m\to\infty$ and 
    %\pat{(I need to read this again)}
    \begin{align*}
        \inf_{x\in A}e^{\gamma h_\ell(x)-\frac{\gamma^2}2k_{d_s,\ell}(x,x)}\mu_{\ell}^\gamma(A)\leq \nu^\gamma(A)\leq \sup_{x\in A}e^{\gamma h_\ell(x)-\frac{\gamma^2}2k_{d_s,\ell}(x,x)}\mu_{\ell}^\gamma(A).
    \end{align*}
    In particular, $\{\nu^\gamma(A)>0\}=\{\mu_\ell^\gamma(A)>0\}$ for all $\ell\geq 1$. Moreover, since
    \begin{equation*}
    (h_m-h_\ell)(x)=\sum_{k=N_{\ell+1}}^{N_m}\xi_{k}\lambda_{k}^{-\frac{d_S}{4}}\varphi_k(x)
    \end{equation*}
    for every $m\geq \ell$, the random measure $\mu_\ell^\gamma$ only depends on the random variables
    $(\xi_{N_{\ell+1}},\xi_{N_{\ell+2}},\ldots)$. Thus,
    \begin{align*}
    \{\mu_\ell^\gamma(A)>0\}\in
    \sigma(\xi_{N_{\ell+1}},\xi_{N_{\ell+2}},\ldots)
    \end{align*}
    and hence 
    \begin{align*}
    \{\nu^\gamma(A)>0\}\in
    \sigma(\xi_{N_{\ell+1}},\xi_{N_{\ell+2}},\ldots)
    \end{align*}
    for all $\ell\geq 1$. Consequently,
    \begin{align*}
    \{\nu^\gamma(A)>0\}\in
    \bigcap_{\ell\geq1}
    \sigma(\xi_{N_{\ell+1}},\xi_{N_{\ell+2}},\ldots),
    \end{align*}
    which is the tail sigma-field of the independent sequence $\{\xi_k\}_{k\geq1}$. By Kolmogorov's zero-one law,
    \begin{equation*}
    \mathbb{P}(\nu^\gamma(A)>0)\in\{0,1\}
    \end{equation*}
    and since we already know $\P(\nu^\gamma(A)>0)>0$, the above probability must equal one. Thus, $\nu^\gamma$ has full support $\P$-a.s. as claimed.
\end{proof}
% \begin{proof}
%     We first prove that $\nu^\gamma$ has full support with positive probability. Otherwise, there would exist an open set $O\subset K$ with $\mu(O)>0$ and $\mathbb{P}(\nu^\gamma(O)=0)=1$. Since $\nu^\gamma(O)\geq 0$ $\mathbb{P}$-a.s., the latter would imply that $\mathbb{E}[\nu^\gamma(O)]=0$ and contradict $\mu(O)>0$ because $\mathbb{E}[\nu^\gamma(O)]=\mu(O)$ by Corollary~\ref{C:expected_measure}. Applying Kolmogorov's zero-one law...\pat{(finish)}
% \end{proof}

% The second direct consequence of Proposition~\ref{P:measure_bdd_martingale}, see e.g.~\cite[Corollary 6.21]{Kle14}, is uniformly integrability.
% \begin{corollary}\label{C:approx_unif_integ}
%     Let $|\gamma|<\sqrt{d_H/C}$. Then, for any $u\in C_b(K)$, the family $\{\nu^\gamma_{h_{K,\ell}}(u)\}_{\ell\geq 1}$ is $\mathbb{P}$-a.s. uniformly integrable.
% \end{corollary}

 The next corollary in particular implies that $\nu^\gamma$ has no atoms. 
   
   \begin{corollary}
   Let $|\gamma|<\sqrt{d_H/C_0}$. Then, for any $a<d_H-C_0\gamma^2$,
   \begin{equation*}
   \mathbb{E}\left[\int_K\int_K \frac{1}{d(x,y)^a}\, d\nu^\gamma(x)\, d\nu^\gamma(y)\right]<\infty.
   \end{equation*}
   In particular, any $E\subseteq K$ with $\dim_H(E)\leq d_H-C\gamma^2$ satisfies $\nu^\gamma(E)=0$ $\mathbb{P}$-a.s.
   \end{corollary}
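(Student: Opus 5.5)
We will compute the expected double integral through the approximations $\nu_\ell^\gamma$ and pass to the limit. For fixed $\ell$, Fubini and the Gaussian moment generating function computation already carried out in the proof of Proposition~\ref{P:measure_bdd_martingale} give
\begin{equation*}
\mathbb{E}\left[\int_K\int_K \frac{1}{d(x,y)^a}\, d\nu_\ell^\gamma(x)\, d\nu_\ell^\gamma(y)\right]=\int_K\int_K \frac{e^{\gamma^2 k_{d_S,\ell}(x,y)}}{d(x,y)^a}\, d\mu(x)\, d\mu(y).
\end{equation*}
Bounding $k_{d_S,\ell}(x,y)\leq C_0(1+\log\frac{1}{d(x,y)})$ uniformly in $\ell$, as was done in that proof via Proposition~\ref{P:log_correlation_kernel}, the integrand is dominated by $e^{\gamma^2C_0}d(x,y)^{-(a+\gamma^2C_0)}$. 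The dyadic annulus decomposition of Lemma~\ref{lemma: integrability} with exponent $a+\gamma^2C_0$ in place of $\gamma^2C_0$ shows this is integrable exactly when $a+\gamma^2C_0<d_H$, which is our hypothesis. This gives a bound $M$ uniform in $\ell$.

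To pass to the limit, truncate the kernel: for $\delta>0$ set $f_\delta(x,y):=\min\{d(x,y)^{-a},\delta^{-a}\}$, a bounded continuous function on $K\times K$. By Theorem~\ref{T: LQGM}, $\nu_\ell^\gamma(u)\to\nu^\gamma(u)$ in $L^2(\Omega)$ for $u\in C_b(K)$; hence for product test functions $u\otimes v$, $\nu_\ell^\gamma(u)\nu_\ell^\gamma(v)\to\nu^\gamma(u)\nu^\gamma(v)$ in $L^1(\Omega)$. Approximating $f_\delta$ uniformly on the compact $K\times K$ by finite sums of products (Stone--Weierstrass) and using that $\mathbb{E}[\nu_\ell^\gamma(K)^2]$ is uniformly bounded, we get $\mathbb{E}[(\nu^\gamma\otimes\nu^\gamma)(f_\delta)]=\lim_\ell\mathbb{E}[(\nu_\ell^\gamma\otimes\nu_\ell^\gamma)(f_\delta)]\leq M$. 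Monotone convergence as $\delta\downarrow0$ yields the claimed finiteness. This limit exchange is the main technical step; the key point is that the uniform second moment bound makes the error from the Stone--Weierstrass approximation uniformly small.

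For the consequence, finiteness of the expected $a$-energy implies that, $\mathbb{P}$-a.s., $\nu^\gamma$ has finite $a$-energy for every $a<d_H-C_0\gamma^2$ (take a countable sequence $a_n\uparrow d_H-C_0\gamma^2$). By the standard energy/capacity argument (Frostman), a measure with finite $a$-energy gives zero mass to every set of Hausdorff dimension less than $a$: if $\nu(E)>0$, the restriction $\nu|_E$ has finite $a$-energy, forcing $\dim_H E\geq a$. Hence any $E$ with $\dim_H(E)<d_H-C_0\gamma^2$ is $\nu^\gamma$-null a.s. The boundary case of equality does not follow directly from this argument and would need the statement read with strict inequality or a sharper estimate. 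Points have dimension $0<d_H-C_0\gamma^2$, so $\nu^\gamma$ has no atoms.
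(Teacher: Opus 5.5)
Your proposal is correct and follows the paper's proof. It uses the Fubini/Gaussian moment computation of Proposition~\ref{P:measure_bdd_martingale} with Lemma~\ref{lemma: integrability} at exponent $a+\gamma^2C_0$, passes to the limit, and applies Frostman; your truncation, Stone--Weierstrass and monotone-convergence step is an explicit version of the paper's one-line appeal to Portmanteau. That appeal tacitly needs an a.s.\ weakly convergent subsequence, lower semicontinuity of $d(x,y)^{-a}$ and Fatou. Exactly like the paper, you rely on a uniform-in-$\ell$ logarithmic upper bound for the truncated kernels $k_{d_S,\ell}$, which Proposition~\ref{P:log_correlation_kernel} does not literally supply since it concerns only $k_{d_S}$.

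Your caveat on the equality case is right and corrects the statement as written. Finite $a$-energy for every $a<d_H-C_0\gamma^2$ only forces $\nu^\gamma(E)=0$ when $\dim_H(E)<d_H-C_0\gamma^2$: already $\mu$ has finite $a$-energy for all $a<d_H$, while $\mu(K)=1$ and $\dim_H K=d_H$. So the paper's Frostman step does not justify ``$\leq$''.
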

   \begin{proof}
   By the same arguments as in the proof of Proposition~\ref{P:measure_bdd_martingale},
   \begin{equation*}
   \sup_{\ell\in\mathbb{N}}\mathbb{E}\left[\int_K\int_K \frac1{d(x,y)^a}\, d\nu^\gamma_\ell(x)\, d\nu^\gamma_{\ell}(y)\right]\leq \int_K\int_K \frac1{d(x,y)^{a+\gamma^2C}}\, d\mu(x)\, d\mu(y),
   \end{equation*}
   where the right-hand side is finite due to Lemma~\ref{lemma: integrability}. Then, Portmanteau's theorem implies
   \begin{equation*}
    \mathbb{E}\left[\int_K\int_K \frac1{d(x,y)^a}\, d\nu^\gamma(x)\, d\nu^\gamma(y)\right]\leq\liminf_{\ell\to\infty}\mathbb{E}\left[\int_K\int_K \frac{1}{d(x,y)^a}\, d\nu^{\gamma}_{\ell}(x)\, d\nu^\gamma_{\ell}(y)\right]<\infty,
   \end{equation*}
   which yields
   \begin{equation*}
       \int_K\int_K \frac1{d(x,y)^a}\, d\nu^\gamma(x)\, d\nu^\gamma(y)<\infty
       \qquad\mathbb{P}\text{-a.s.}
   \end{equation*}   
%   \eva{Again Borel-Cantelli is not needed: Let $X$ be a nonnegative random variable such that $\E[X]<\infty$. Then $\P[X<\infty]=1$. This can be seen by contradiction: Suppose $\P[X<\infty]<1$. Then $\P[X=\infty]>0$. But then $\E[X]=\infty$.}
  Finally, Frostman's lemma~\cite[Theorem 4.13]{Fal03} implies that $\nu^\gamma(E)=0$ for any $E\subseteq K$ such that the Hausdorff dimension is at most $d_H-C_0\gamma^2$.
   \end{proof}

%%%%--------------------------------------------------------------
%%%%--------------------------------------------------------------
\subsection{Invariance properties}

\subsubsection{Invariance by symmetries}
We analyze the interplay of the LQG measure on $K$ and the symmetries $\sigma_1$, $\sigma_2$ and $\sigma_3$ described in Section~\ref{SS:symmetries_SG}. In the sequel, given a measure $\nu$ and a map $F$ on $K$ we denote by $F_\#\nu$ the push-forward of $\nu$ by $F$.

\begin{theorem}%\label{T:measure_invariance_symmetry}
    Let $|\gamma|<\sqrt{d_H/C_0}$ and let $\nu^\gamma$ be the LQG measure on $K$. For any $i=1,2,3$,
    \begin{equation*}
        \nu^\gamma\stackrel{d}{=}(\sigma_i)_\#\nu^\gamma.
    \end{equation*}
\end{theorem}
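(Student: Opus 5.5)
We prove the claim at the level of the approximations $\nu^\gamma_\ell$ and then pass to the limit using Theorem~\ref{T: LQGM}. An alternative is to use the uniqueness in Definition~\eqref{eq: trans} via Shamov's characterization. We take the approximation route, which is more concrete.

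Fix $i$ and write $\sigma=\sigma_i$. Since $\sigma$ is an involution, $\sigma^{-1}=\sigma$. For $u\in C_b(K)$, the invariance $\sigma_\#\mu=\mu$ gives
\begin{equation*}
(\sigma_\#\nu^\gamma_\ell)(u)=\int_K u(\sigma(x))\,d\nu^\gamma_\ell(x)=\int_K u(y)\,e^{\gamma h_\ell(\sigma(y))-\frac{\gamma^2}{2}k_{d_S,\ell}(\sigma(y),\sigma(y))}\,d\mu(y).
\end{equation*}
So $\sigma_\#\nu^\gamma_\ell$ is the chaos measure built from the field $h_\ell\circ\sigma$.

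The key step is to show that the process $\{h_\ell\circ\sigma(x)\}_{x\in K}$ has the same law as $\{h_\ell(x)\}_{x\in K}$. Both are centered continuous Gaussian fields, so it suffices to compare covariances, that is, to show $k_{d_S,\ell}(\sigma(x),\sigma(y))=k_{d_S,\ell}(x,y)$.

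Because $\mathcal E$ and $\mu$ are $\sigma$-invariant, the map $U_\sigma u:=u\circ\sigma$ is unitary on $L^2(K)$ and commutes with $\Delta$. Hence $U_\sigma$ preserves each eigenspace $E_\lambda$. The truncated kernel $k_{d_S,\ell}=\sum_{\lambda\le\Lambda_\ell}\lambda^{-d_S/2}\sum_{\varphi\in\mathrm{ONB}(E_\lambda)}\varphi(x)\varphi(y)$ is built from full eigenspaces. This uses Proposition~\ref{P:6series_props}: the cutoff $N_\ell$ ends at the end of an eigenvalue block, and we must make sure the indexing does not split an eigenspace. Each reproducing kernel $\sum_{\varphi}\varphi(x)\varphi(y)$ of an eigenspace is independent of the choice of orthonormal basis, and $\{\varphi\circ\sigma\}$ is again an orthonormal basis of $E_\lambda$. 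Therefore each block kernel, and hence $k_{d_S,\ell}$, is $\sigma$-invariant.

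This is the step I expect to be the main obstacle. It requires checking that the truncation at $N_\ell$ respects eigenspaces, which is where the precise spectral bookkeeping from Proposition~\ref{P:6series_props} is used. If that bookkeeping fails, a fallback is to instead define the approximations by spectral projections onto $\{\lambda\le\Lambda_\ell\}$, which are manifestly $\sigma$-equivariant, and rerun Proposition~\ref{P:measure_bdd_martingale}.

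With the covariances matched, $(\sigma_\#\nu^\gamma_\ell(u_1),\dots,\sigma_\#\nu^\gamma_\ell(u_m))$ has the same law as $(\nu^\gamma_\ell(u_1),\dots,\nu^\gamma_\ell(u_m))$ for any $u_1,\dots,u_m\in C_b(K)$, since both are the same measurable functional of equally distributed continuous fields.

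Finally, we let $\ell\to\infty$. By Theorem~\ref{T: LQGM}, $\nu^\gamma_\ell(u_j)\to\nu^\gamma(u_j)$ in $L^2(\Omega)$. Applying the same result to $u_j\circ\sigma\in C_b(K)$ gives $\sigma_\#\nu^\gamma_\ell(u_j)\to\sigma_\#\nu^\gamma(u_j)$ in $L^2(\Omega)$. Convergence in probability preserves equality of finite-dimensional laws, so the finite-dimensional distributions of $\{\nu^\gamma(u)\}_{u\in C_b(K)}$ and $\{\sigma_\#\nu^\gamma(u)\}_{u\in C_b(K)}$ coincide. Since the law of a random finite Borel measure on the compact space $K$ is determined by these (for instance via a countable dense subset of $C(K)$), we conclude $\nu^\gamma\stackrel{d}{=}(\sigma_i)_\#\nu^\gamma$.
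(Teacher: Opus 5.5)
Your proof is correct, but it follows a different route from the paper's.

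\textbf{The paper's route.} The paper works directly with the limiting objects. It observes that $(\sigma_i)_\#\nu^\gamma$ is the LQG measure associated with $h\circ(\sigma_i)^{-1}$. One checks this through the transformation rule \eqref{eq: trans}, using $k_{d_S}(\sigma_i x,\sigma_i y)=k_{d_S}(x,y)$ and $(\sigma_i)_\#\mu=\mu$. It then combines $h\stackrel{d}{=}h\circ(\sigma_i)^{-1}$ from Theorem~\ref{T:invariance_symmetry} with the fact, implicit in Shamov's uniqueness, that the chaos is a measurable functional of the field. Hence equal laws of fields give equal laws of measures.

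\textbf{Your route.} You prove invariance in law for each approximation $\nu^\gamma_\ell$ and pass to the limit using the $L^2(\Omega)$-convergence of Theorem~\ref{T: LQGM}, applied to both $u$ and $u\circ\sigma_i$. This avoids the appeal to Shamov's uniqueness and measurability, which is the step the paper leaves implicit. The price is that you need $\sigma_i$-invariance of the truncated kernel $k_{d_S,\ell}$, not just of $k_{d_S}$.

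\textbf{The step you flagged is fine.} Proposition~\ref{P:6series_props}(ii) gives $\{k:\lambda_k\le\Lambda_j\}=\{1,\dots,N_{j+1}\}$, so the cutoff at $N_\ell$ never splits an eigenspace, at least for $\ell\ge 3$. Your limit argument only uses large $\ell$, so the fallback is unnecessary. There is also a slightly more direct way to see the equality in law of the approximating fields. Since $u\mapsto u\circ\sigma_i$ acts orthogonally on each eigenspace, $h_\ell\circ\sigma_i$ is obtained from $h_\ell$ by replacing $(\xi_1,\dots,\xi_{N_\ell})$ with a block-orthogonal transform of it. That transform is again i.i.d.\ standard Gaussian, so $h_\ell\circ\sigma_i\stackrel{d}{=}h_\ell$ in $C(K)$ without a separate covariance computation.
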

\begin{proof}
    Note that $\nu^\gamma$ is the LQG measure associated to the FGFF $h$, while $(\sigma_i)_\#\nu^\gamma$ is the LQG measure associated to $h\circ(\sigma_i)^{-1}$. Since $h\stackrel{d}{=}h\circ(\sigma_i)^{-1}$ by Theorem~\ref{T:invariance_symmetry}, the assertion follows.
\end{proof}
\subsubsection{Self-similarity by scaling}
For each word $w\in\{1,2,3\}^n$, $n\geq 1$, let $\nu_{w}^\gamma$ denote the LQG measure associated with the FGFF $h^w$ on $K^w$ defined in Theorem~\ref{T:scaling_invariance}. We show next that, in distribution, the measure $\nu_{w}^\gamma$ only depends of the length of $w$.

\begin{theorem}%\label{T:measure_invariance_scaling}
Let $|\gamma|< \sqrt{d_H/C_0}$. For any $w\in\{1,2,3\}^n$, $n\geq 1$, the LQG measures $\nu^\gamma$ and $\nu_{w}^\gamma$ satisfy
\begin{equation}\label{E:measure_invariance_scaling}
 \nu_{w}^\gamma\overset{d}{=}3^{-n}(F_w)_\#\nu^\gamma.
\end{equation}
\end{theorem}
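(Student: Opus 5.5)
We plan to verify that the pushed-forward measure $\tilde\nu:=3^{-n}(F_w)_\#\nu^\gamma$, regarded as a random measure on $K_w$ coupled with the field $h^w=h\circ F_w^{-1}$ from Theorem~\ref{T:scaling_invariance}, satisfies the defining transformation rule~\eqref{eq: trans} on $K_w$. Concretely, $K_w$ carries the measure $\mu_w$, the field $h^w$ and the kernel $k^w_{d_S}$, so we must check that
\begin{equation*}
\mathbb{E}\Big[\int_{K_w}G(h^w,y)\,d\tilde\nu(y)\Big]=\mathbb{E}\Big[\int_{K_w}G\big(h^w+\gamma k^w_{d_S}(y,\cdot),y\big)\,d\mu_w(y)\Big]
\end{equation*}
for every nonnegative measurable $G$ on $H^{-\eps}(K_w)\times K_w$. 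Once this holds, the uniqueness part of Theorem~\ref{T: LQGM}, i.e.\ \cite[Corollary 5]{Sha16} applied on $K_w$, shows that $\tilde\nu$ coincides with $\nu^\gamma_w$ as a function of $h^w$. Since $h^w$ is the FGFF on $K_w$ by Theorem~\ref{T:scaling_invariance}, the identity~\eqref{E:measure_invariance_scaling} in law follows.

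The core step is a change of variables. First we write
\begin{equation*}
\int_{K_w}G(h^w,y)\,d\tilde\nu(y)=3^{-n}\int_K G(h\circ F_w^{-1},F_w(x))\,d\nu^\gamma(x),
\end{equation*}
and view the integrand as a functional $F(h,x):=G(h\circ F_w^{-1},F_w(x))$ on $H^{-\eps}(K)\times K$. Applying~\eqref{eq: trans} on $K$ replaces $h$ by $h+\gamma k_{d_S}(x,\cdot)$. We then need the distributional identity
\begin{equation*}
\big(h+\gamma k_{d_S}(x,\cdot)\big)\circ F_w^{-1}=h^w+\gamma k^w_{d_S}(F_w(x),\cdot),
\end{equation*}
which follows from~\eqref{eq: kernelinv}, i.e.\ $k_{d_S}(x,F_w^{-1}(z))=k^w_{d_S}(F_w(x),z)$, once this is tested against $u\in H^\eps(K_w)$ through~\eqref{E:def_weak_prod_Kw}. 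Finally, the measure relation~\eqref{eq: measureinv} turns $3^{-n}\int_K\cdot\,d\mu(x)$ into $\int_{K_w}\cdot\,d\mu_w(y)$, which gives exactly the right-hand side above.

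An alternative route, or a cross-check, is to work with the approximations directly. The eigenfunctions of the Dirichlet Laplacian on $K_w$ are $3^{n/2}\varphi_k\circ F_w^{-1}$ with eigenvalues $5^n\lambda_k$, so $h^w_\ell=h_\ell\circ F_w^{-1}$ and $k^w_{d_S,\ell}(F_wx,F_wy)=k_{d_S,\ell}(x,y)$ (using $5^{nd_S/2}=3^n$). Hence $\nu^\gamma_{w,\ell}=3^{-n}(F_w)_\#\nu^\gamma_\ell$ holds identically, and one passes to the limit using Theorem~\ref{T: LQGM}.

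The main obstacle is the bookkeeping of the normalizations in the distributional definition~\eqref{E:def_weak_prod_Kw}. The factor $3^{-n}$ there has to match the correct interpretation of the shift $\gamma k_{d_S}(x,\cdot)$ as an element of $H^{-\eps}$, so that the composition with $F_w^{-1}$ is consistent. We also have to make sure that the functional $G(\,\cdot\circ F_w^{-1},F_w(\cdot))$ is measurable on $H^{-\eps}(K)\times K$. We would settle the normalization by checking it on smooth $h_\ell$, where everything is pointwise.
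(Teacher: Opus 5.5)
Your proposal is correct. Your main argument takes a different route from the paper, and the paper's proof is essentially your ``alternative route''.

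\textbf{The paper's route.} The paper sets $h^w_\ell:=h_\ell\circ F_w^{-1}$ and appeals to ``analogous arguments'' to Theorem~\ref{T: LQGM} for $\nu^\gamma_{w,\ell}\to\nu^\gamma_w$ on $K_w$. It then rewrites $\nu^\gamma_{w,\ell}(u)$ as $3^{-n}\nu^\gamma_\ell(u\circ F_w)$ using $\mu_w=3^{-n}(F_w)_\#\mu$, and passes to the limit. Your eigenbasis check ($3^{n/2}\varphi_k\circ F_w^{-1}$ with eigenvalues $5^n\lambda_k$, and $5^{nd_S/4}=3^{n/2}$) supplies what the paper leaves implicit. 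It shows that $h_\ell\circ F_w^{-1}$ is exactly the spectral truncation of the FGFF on $K_w$, so Theorem~\ref{T: LQGM} genuinely applies there. It also gives $\nu^\gamma_{w,\ell}=3^{-n}(F_w)_\#\nu^\gamma_\ell$ pathwise, hence a.s.\ equality in the limit, which is stronger than equality in law.

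\textbf{Your main route.} You verify~\eqref{eq: trans} on $K_w$ for $3^{-n}(F_w)_\#\nu^\gamma$ by a change of variables and then invoke uniqueness. This avoids approximations, needs no convergence theorem on $K_w$, and even yields existence of $\nu^\gamma_w$ for free. It does, however, rest entirely on \cite[Corollary 5]{Sha16}, whose uniqueness is among random measures that are measurable functions of the field. You should justify this explicitly. By the eigenbasis correspondence, $f\mapsto f\circ F_w^{-1}$ is a bounded linear bijection $H^{-\eps}(K)\to H^{-\eps}(K_w)$; indeed $\|f\circ F_w^{-1}\|_{H^{-\eps}(K_w)}^2=5^{-n\eps}3^{-n}\|f\|_{H^{-\eps}(K)}^2$. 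Hence $\sigma(h^w)=\sigma(h)$, and $3^{-n}(F_w)_\#\nu^\gamma$ is a measurable function of $h^w$. The same fact settles the measurability of $G(\cdot\circ F_w^{-1},F_w(\cdot))$ on $H^{-\eps}(K)\times K$.

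\textbf{The normalization.} The issue you flag as the main obstacle is not one. Since $\mu_w=3^{-n}(F_w)_\#\mu$, for every $f\in L^2(K)$ and $u\in H^{\eps}(K_w)$ one has $3^{-n}\langle f,u\circ F_w\rangle=\int_{K_w}(f\circ F_w^{-1})\,u\,d\mu_w$. So the distributional composition in~\eqref{E:def_weak_prod_Kw} coincides with pointwise composition on $L^2$ functions. Because $k_{d_S}(x,\cdot)\in L^2(K)$ by its logarithmic bound, \eqref{eq: kernelinv} gives your shift identity $\big(h+\gamma k_{d_S}(x,\cdot)\big)\circ F_w^{-1}=h^w+\gamma k^w_{d_S}(F_w(x),\cdot)$ immediately, with no need to check it on $h_\ell$ first.
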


\begin{proof}
Let $w\in\{1,2,3\}^n$, $n\geq 1$. According to Theorem~\ref{T:scaling_invariance},
$h_{w}\overset{d}{=}h\circ(F_w^{-1})$. 
Further, set
\begin{equation}\label{eq: hl}
h_{\ell}^w:=h_{\ell}\circ(F_w^{-1}),
\end{equation}
where $h_{\ell}$ is the approximate FGFF from~\eqref{E:def_approx_RF_SG}. 
By virtue of Theorem~\ref{T:convergence}, $\{h_{\ell}\}_{\ell\geq 1}$ converges $\mathbb{P}$-a.s. and in $L^2(\Omega)$ to the FGFF $h$ on $K$, and analogously $\{h^w_{\ell}\}_{\ell\geq 1}$ converges to $h^{w}$. 
For each $\ell\geq 1$, the covariance kernel associated with $h^w_{\ell}$ is given by 
\begin{equation}\label{eq: kl}
k^w_{\ell}(\tilde x,\tilde y):=\mathbb E[h_{\ell}^w(\tilde x)h_{\ell}^w(\tilde y)]=k_{d_s,\ell}(F_w^{-1}(\tilde x),F_w^{-1}(\tilde y))
\end{equation}
for all $\tilde x,\tilde y\in K_w$. 
Analogous arguments as those used in Theorem~\ref{T: LQGM} imply that the associated LQG measure $\nu_{w,\ell}^\gamma$ converges to the LQG measure $\nu_w^\gamma$ on $K_w$, i.e.
\begin{equation*}
\int_{K_w}u\, d\nu_{{w,\ell}}^\gamma\xrightarrow{L^1(\Omega)}\int_{K_w} u\, d\nu_{w}^\gamma
\end{equation*}
holds for all $u\in C_b(K_w)$.

\medskip

We now show the invariance~\eqref{E:measure_invariance_scaling}: For any $u\in C_b(K_w)$, 
%be a continuous and bounded function on $K_w$. 
it follows from~\eqref{eq: measureinv},~\eqref{eq: hl} and~\eqref{eq: kl} that
\begin{align*}
\int_{K_w}u(\tilde x)d\nu_{w}^\gamma(\tilde x)
&=\lim_{\ell\to\infty}\int_{K_w}u(\tilde x)e^{\gamma h_{\ell}^w(\tilde x)-\frac{\gamma^2}2k_{d_s,\ell}^w(\tilde x,\tilde x)}\, d\mu_w(\tilde x)\\
&=3^{-n}\lim_{\ell\to\infty}\int_{K}u(F_w(x))e^{\gamma h_{\ell}^w(F_w(x))-\frac{\gamma^2}2k_{d_s,\ell}^w(F_w(x),F_w( x))}\, d\mu(x)\\
%\int_{K_w}u(\tilde x)d\nu_{w}^\gamma(\tilde x)
&\overset{d}{=}3^{-n}\lim_{n\to\infty}\int_{K}u(F_w(x))e^{\gamma h_\ell(x)-\frac{\gamma^2}2k_{d_s,\ell}(x,x)}\, d\mu( x)\\
&=3^{-n}\int_{K}u(F_w(x))\, d\nu^\gamma(x)
\end{align*}
as we wanted to prove.
\end{proof}

%In view of the latter theorem, the Liouville quantum gravity on $K$ is defined as the Gaussian multiplicative chaos 
%\begin{equation}
   % \nu_\gamma(B):=\text{a.s.}\lim_{\ell\to\infty}\nu_{\ell,\gamma}(B),\qquad A\in\mathcal{B}(K).
%\end{equation}
%It remains to justify that this limit is non-degenerate. Here is where the $\log$-expression is necessary to apply Kahane's theorem 
%%%%%-----------------------------------------------------
%%%%%-----------------------------------------------------

\section{Liouville Brownian motion}\label{S:LBM}
Similarly to the case of planar Liouville Brownian motion (LBM) introduced by Garban-Rhodes-Vargas in~\cite{GRV16}, LBM on the Sierpinski gasket will arise as a process defined through a time change of the (standard) Brownian motion associated with the Dirichlet form~\eqref{E:def_DF_SG}.

\medskip

More precisely, let $\rho$ be an initial probability distribution on $K$ and let $\{B_t\}_{t\geq 0}$ denote the Hunt process on $(\Omega,\mathbb{Q}_\rho)$ associated with the standard Dirichlet form $(\mathcal{E},\mathcal{F})$ from~\eqref{E:def_DF_SG}. %We assume that the process and the FGFF on $K$ are independent, so that the joint law of $\{B_t\}_{t\geq 0}$ and $h_K$ is $\mathbb{Q}_\rho\otimes\mathbb{P}$. 
%\pat{(check die Richtung stimmt: unabhängig daher $\otimes$ Produkt)} - es ist dasselbe

\medskip

The LBM on $K$ constructed in this section may be regarded as the diffusion process $\{B_t^\gamma\}_{t\geq 0}$ under the LQG measure $\nu^\gamma$ analyzed in Section~\ref{SS:LQGM}. Formally, the process arises as the solution to the SDE 
\begin{equation*}
    dB_t^\gamma=e^{-\frac{\gamma}{2}h(B^\gamma_t)+\frac{\gamma^2}{4}\mathbb{E}[h(B^\gamma_t)^2]}dB_t.
\end{equation*}

By Dubins-Schwarz%\pat{(Referenz FMI? - Sebastian/Kajino)}
, the associated time change is given by 
\begin{equation*}
    A_t^\gamma=\int_0^t e^{\gamma h(B_s)-\frac{\gamma^2}{2}\mathbb{E}[h(B_s)^2]}ds.
\end{equation*}
The latter is a positive continuous additive functional (PCAF) which will be shown to be in \emph{Revuz correspondence} with the measure $\nu^\gamma$, i.e.
\begin{equation*}%\label{E:Revuz}
    \mathbb{E}_\rho\Big[\int_0^tu(B_s^\gamma)\,dA_s^\gamma\Big]=\int_0^t\int_K\int_Ku(y)p_s(x,y)d\rho(x)\,d\nu^\gamma(y)\,ds,
\end{equation*}
c.f. Theorem~\ref{T:def_LBM_SG} and~\cite[Section 5.1]{FOT11} for definitions and details concerning PCAFs and smooth measures. 

\medskip

\subsection{Capacities and smooth measures}%\label{SS:smooth_measures}
To prove the existence of the process $\{B_t^\gamma\}_{t\geq 0}$ we rely on general Dirichlet form theory and first show that the LQG measure $\nu^\gamma$ is $\mathbb{P}$-a.s.
\emph{smooth}, i.e. it is a positive Borel measure that charges no sets of $\mathcal{E}_1$-capacity zero and admits a generalized nest, see~\cite[Chapter 2.2]{FOT11}. The latter property follows from the fact that $K$ is compact and $\nu^\gamma$ is a finite measure. %\eva{Generalized nest means there exists an increasing sequence $\{F_n\}$ of closed sets such that $\nu^\gamma(F_n)<\infty$ for all $n\in\N$ and $\lim_{n\to\infty} {\rm cap}(C\setminus F_n)=0$ for any compact set $C\subseteq K$. So we could simply take $F_n=K$ for all $n\in \N$. This is an increasing sequence and since $K$ is compact they are particularly closed and we know that $\nu^\gamma(F_n)<\infty$. Moreover for every compact $C\subseteq K$ we have ${\rm cap}(C\setminus F_n)=0$. The latter follows from $C\setminus F_n=\emptyset$ and taking $u=0$ as a competitor in the definition of ${\rm cap}$.} 
Thus, it only remains to verify whether it charges no sets of $\mathcal{E}_1$-capacity zero.

\medskip

To this end, recall e.g. from~\cite[Section 2.1]{FOT11} the symmetric form $(\mathcal E_1,\mathcal{F})$ on $L^2(K)$ given by
\begin{equation*}
    \mathcal E_1(u,v):=\mathcal E(u,v)+\langle u,v\rangle
\end{equation*}
for any $u,v\in \mathcal{F}$. 
The $\mathcal E_1$-capacity of an open set $A\subset K$ is now defined as
\begin{equation}\label{E:def_cap}
    \mathrm{cap}(A):=\inf\{\mathcal E_1(u,u): u\in\mathcal F,\ u\geq 1\ \mu\text{-a.e. on }A\},
\end{equation}
where we set $\inf\emptyset:=\infty$. The $\mathcal{E}_1$-capacity of a generic set $E\subset K$ is then given by
\begin{equation*}
    \mathrm{cap}(E):=\inf_{B\text{ open, }E\subset B}\mathrm{cap}(B),
\end{equation*}
and a set $E$ with $\mathrm{cap}(E)=0$ is called an $\mathcal E_1$-polar set.

\begin{remark}
	Here, we use the convention of $\mathcal E_1$-capacity, which is used in~\cite[Section 2.1]{FOT11}. 
    Since $\mathcal E(u,u) \geq\lambda_1\|u\|_{L^2}^2$ due to the positivity of the first Dirichlet eigenvalue of the Laplacian, $\mathcal E$ and $\mathcal E_1$ determine equivalent norms on $\mathcal F$. 
    In particular, $\mathcal E$-polar sets coincide with $\mathcal E_1$-polar sets.
\end{remark}

In order to prove that the measure $\nu^\gamma$ charges no $\mathcal E_1$-polar sets, i.e. that for $E\subset K$
\begin{align*}
    \mathrm{cap}(E)=0\quad \Rightarrow\quad \nu^\gamma(E)=0~\quad\mathbb{P}\text{-a.s.},
\end{align*}
we study the related \emph{Bessel potentials}.
By analogy with~\cite[Definition 2.3]{HZ05}, for $s>0$ and $\alpha\geq0$ we define the Bessel kernel
\begin{align*}
r_{s,\alpha}(x,y):=\frac1{\Gamma(s)}\int_0^\infty e^{-\alpha t}t^{s-1}p_t(x,y)\, dt, \quad x,y\in K,\ x\neq y,
\end{align*}
and its associated Bessel potential
\begin{equation*}
R_{s,\alpha}f(x):=\int_K r_{s,\alpha}(x,y)f(y)\, d\mu(y),
\end{equation*}
where $f\in L^2(K)$. 
Similarly to~\cite[Proposition 2.4]{HZ05} %\cite[Lemma 2.9]{BC23}
we may write the latter in terms of the infinitesimal generator $\Delta$ as
\begin{equation*}
    R_{s,\alpha}f=(\alpha I-\Delta)^{-s}f,
\end{equation*} 
for any $f\in L^2(K)$. 
%Moreover, for all $\alpha>0$ the kernel satisfies the convolution property
%\begin{align}\label{eq: conv}
 %   r_{s,\alpha}\ast r_{t,\alpha}=r_{s+t,\alpha}, \quad s,t>0,
%\end{align}
%cf. \cite[Corollary 2.11]{BC23}.
For $\nu\in\mathcal M_b(K)$, we further set
\begin{equation*}
R_{s,\alpha}\nu(x):=\int_K r_{s,\alpha}(x,y)\, d\nu(y).
\end{equation*}

\begin{lemma}\label{lemma: ziemer}
For any $E\subseteq K$, let
    \begin{equation*}
        \begin{aligned}
        b(E)
        := \sup\{\nu(E)\colon \nu\in\mathcal{M}_b(K),\|R_{1/2,1}(\mathbf{1}_E\nu)\|_{L^2}\leq 1\}.
        \end{aligned}
    \end{equation*}
Then, for every open set $A\subset K$ it holds that
    \begin{equation}\label{E:cap_is_b2}
    \mathrm{cap}(A)= b(A)^2.
    \end{equation}
\end{lemma}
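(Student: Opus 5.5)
The plan is to read \eqref{E:cap_is_b2} as a Hilbert-space duality statement. Since $R_{1/2,1}=(I-\Delta)^{-1/2}$, the form domain is $\mathcal F=R_{1/2,1}(L^2(K))$. Moreover, $u=R_{1/2,1}f$ satisfies
\begin{equation*}
\mathcal E_1(u,u)=\|(I-\Delta)^{1/2}u\|^2=\|f\|^2 .
\end{equation*}
So for open $A$ we have $\mathrm{cap}(A)=\inf\{\|f\|^2\colon R_{1/2,1}f\geq 1\ \mu\text{-a.e. on }A\}$. This is a minimum-norm problem in $L^2(K)$, and $b(A)$ is its dual formulation over positive measures. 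I will prove the two inequalities separately. The tools are the standard facts from \cite[Sections 2.1--2.2]{FOT11} on quasi-continuous versions and equilibrium measures.

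\emph{Step 1: $b(A)^2\leq \mathrm{cap}(A)$.} Let $u=R_{1/2,1}f\in\mathcal F$ with $u\geq1$ $\mu$-a.e. on $A$. Let $\nu\in\mathcal M_b(K)$ with $\|R_{1/2,1}(\mathbf 1_A\nu)\|\leq1$.
\begin{itemize}
\item Because $A$ is open, $u\geq 1$ $\mu$-a.e. on $A$ upgrades to $\tilde u\geq1$ quasi-everywhere on $A$ for the quasi-continuous version $\tilde u$ \cite[Lemma 2.1.4]{FOT11}.
\item The measure $\mathbf 1_A\nu$ has finite energy. Indeed, its $\mathcal E_1$-energy equals $\|R_{1/2,1}(\mathbf 1_A\nu)\|^2$, because $R_{1/2,1}^2=R_{1,1}=(I-\Delta)^{-1}$, which I check by the semigroup identity $r_{1/2,1}*r_{1/2,1}=r_{1,1}$ via Fubini. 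Finite-energy measures charge no polar sets, so $\tilde u\geq 1$ holds $\nu$-a.e. on $A$.
\end{itemize}
Hence, by Fubini and the symmetry of $r_{1/2,1}$,
\begin{equation*}
\nu(A)\leq\int_A\tilde u\,d\nu=\int_K R_{1/2,1}f\,d(\mathbf 1_A\nu)=\langle f,R_{1/2,1}(\mathbf 1_A\nu)\rangle\leq\|f\|.
\end{equation*}
Taking the supremum over $\nu$ and the infimum over $f$ gives $b(A)\leq\mathrm{cap}(A)^{1/2}$.

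\emph{Step 2: $b(A)^2\geq\mathrm{cap}(A)$.} This step uses equilibrium measures.
\begin{itemize}
\item Since $\mathrm{cap}$ is a Choquet capacity \cite[Theorem 2.1.1]{FOT11}, we have $\mathrm{cap}(A)=\sup\{\mathrm{cap}(C)\colon C\subset A\text{ compact}\}$. It therefore suffices to treat compact $C\subset A$. Working with compact $C$ rather than $A$ itself is what keeps the measure supported inside $A$ and not merely in $\overline A$.
\item For such $C$, let $e_C$ be the equilibrium potential, i.e. the $\mathcal E_1$-projection of $0$ onto the closed convex set $\{u\in\mathcal F\colon \tilde u\geq1\text{ q.e. on }C\}$. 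By \cite[Lemma 2.2.6]{FOT11} there is a measure $\nu_C$ supported in $C$ with
\begin{equation*}
\mathcal E_1(e_C,v)=\int\tilde v\,d\nu_C\quad\text{for all } v\in\mathcal F,\qquad \nu_C(C)=\mathcal E_1(e_C,e_C)=\mathrm{cap}(C).
\end{equation*}
\item The representation above means $e_C=R_{1,1}\nu_C=R_{1/2,1}(R_{1/2,1}\nu_C)$. It follows that $\|R_{1/2,1}\nu_C\|^2=\mathcal E_1(e_C,e_C)=\mathrm{cap}(C)$.
\item Set $\nu=\nu_C/\sqrt{\mathrm{cap}(C)}$ (the case $\mathrm{cap}(C)=0$ is trivial). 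Then $\mathbf 1_A\nu=\nu$ and $\|R_{1/2,1}(\mathbf 1_A\nu)\|=1$. Therefore $b(A)\geq\nu(A)=\sqrt{\mathrm{cap}(C)}$.
\end{itemize}
Taking the supremum over $C$ finishes the proof.

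The main obstacle is making the identification between measures and potentials rigorous. Concretely, I must show that for finite-energy $\nu$ the function $R_{1,1}\nu$ is exactly the $\mathcal E_1$-Riesz representative of $v\mapsto\int\tilde v\,d\nu$, and that $\|R_{1/2,1}\nu\|^2$ is its energy. I would prove this first for $\nu=g\,d\mu$ with $g\in L^2$, where it follows from the spectral calculus and \eqref{E:def_Riesz}. I would then pass to general finite-energy $\nu$ by approximating with $\nu$ smoothed by the semigroup, $P_t\nu$. The heat kernel bounds \eqref{E:subG_HKE} and \eqref{E:HKE_large_t} guarantee that $r_{1/2,1}$ and $r_{1,1}$ are integrable kernels, which justifies the limits.
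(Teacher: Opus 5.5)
Your argument is correct, and it takes a genuinely different route from the paper. The paper proves only the reformulation
$\mathrm{cap}(A)=\inf\{\|f\|_{L^2}^2\colon R_{1/2,1}f\geq 1\ \mu\text{-a.e. on }A,\ f\geq0\}$.
This step uses positivity of $r_{1/2,1}$ to replace $g=(I-\Delta)^{1/2}u$ by $g_+$. The actual duality $\mathrm{cap}(A)=b(A)^2$ is then delegated to \cite[Theorem 2.6.12]{Zie89}, a Euclidean result whose proof is only asserted to transfer because it is ``purely variational''.

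You instead prove the duality directly inside the Dirichlet-form framework.
\begin{itemize}
\item The bound $b(A)^2\leq \mathrm{cap}(A)$ comes from the pairing $\nu(A)\leq\langle f,R_{1/2,1}(\mathbf 1_A\nu)\rangle\leq\|f\|$. This needs neither the restriction $f\geq 0$ nor any convex-analytic duality.
\item The reverse bound comes from explicit near-maximizers, namely the normalized equilibrium measures $\nu_C/\sqrt{\mathrm{cap}(C)}$ of compact $C\subset A$. These are combined with inner regularity of the Choquet capacity, and taking $C$ compact correctly keeps the measures inside $A$.
\end{itemize}
The paper's version is shorter on the page but leaves the adaptation of Ziemer's proof to the reader. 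Yours is self-contained, relies only on \cite{FOT11}, which Section~\ref{S:LBM} uses anyway, and identifies the extremal measures.

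The price is the identification of the abstract $1$-potential $U_1\nu$ with the kernel potential $R_{1,1}\nu$, which you flag as the main obstacle. On $K$ this is much easier than your $P_t\nu$ approximation suggests. Since $d_S<2$, the kernel $r_{1,1}$ is bounded and continuous, $\mathcal F\subset C(K)$, and points have positive capacity. Hence all quasi-everywhere statements are just pointwise statements, and every finite measure has finite energy $\int\!\!\int r_{1,1}\,d\nu\,d\nu=\|R_{1/2,1}\nu\|^2$.

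For the identification itself, test the defining relation $\mathcal E_1(U_1\nu,v)=\int v\,d\nu$ with $v=R_{1,1}g$ for $g\in L^2(K)$. The left side equals $\langle U_1\nu,g\rangle$, and by Fubini the right side equals $\langle R_{1,1}\nu,g\rangle$. Both $U_1\nu$ and $R_{1,1}\nu$ are continuous and $\mu$ has full support, so they agree everywhere. This settles both the energy identity in your Step 1 and the equality $\|R_{1/2,1}\nu_C\|^2=\mathcal E_1(e_C,e_C)$ in your Step 2.
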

\begin{proof}
    We first show that
    \begin{equation}\label{E:equiv_cap_Ziemer}
      \mathrm{cap}(A)=  \inf\{\|f\|_{L^2}^2\colon R_{1/2,1}f\geq1\ \mu\text{-a.e. on }A,\ f\geq 0\}.
    \end{equation}
    Afterwards, the claim~\eqref{E:cap_is_b2} will follow from~\cite[Theorem 2.6.12]{Zie89}. {Note that~\cite[Theorem 2.6.12]{Zie89} is formulated for Euclidean Bessel potentials, but its proof is purely variational and can thus be adapted to our setting. See also the analogous adaptation in \cite[Lemma 5.5]{dSHKS24}.} 
    In order to show~\eqref{E:equiv_cap_Ziemer}, note that the spectral theorem yields
\begin{equation}\label{E:inverse_E1}
    R_{1/2,1}f=\sum_{k= 1}^\infty\frac1{(1+\lambda_k)^{1/2}}\langle f,\varphi_k\rangle \varphi_k
\end{equation}
for any $f\in L^2(K)$, and also $R_{1/2,1}f\in \mathcal F$ since
\begin{align*}
    \mathcal E_1(R_{1/2,1}f,R_{1/2,1}f)
    =\sum_{k=1}^\infty \frac{1+\lambda_k}{1+\lambda_k}\langle f,\varphi_k\rangle^2=\|f\|^2_{L^2}.
\end{align*}
Thus, for any $f\in L^2(K)$ with $R_{1/2,1}f\geq 1$ on $A$, the function $u=R_{1/2,1}f$ belongs to the family over which the capacity~\eqref{E:def_cap} minimizes, so that 
\begin{align*}
    \mathrm{cap}(A)\leq \inf\{\|f\|_{L^2}^2\colon R_{1/2,1}f\geq1\ \mu\text{-a.e. on }A,\ f\geq 0\}.
\end{align*}
This shows one inequality in~\eqref{E:equiv_cap_Ziemer}. To prove the converse, for $u\in \mathcal F$ such that $u\geq 1$ $\mu$-a.e. on $A$, set $f:=g_+$, where $g:=(I-\Delta)^{1/2}u$. Then, $f\in L^2(K)$ and $f\geq0$. In addition, since $f\geq g$ and the kernel $r_{1/2,1}$ is nonnegative, it follows that
\begin{equation*}
R_{1/2,1}f\geq R_{1/2,1}g=u,
\end{equation*}
and from~\eqref{E:inverse_E1}
\begin{align*}
    \mathcal E_1(u,u)=\|g\|^2_{L^2}\geq \|f\|_{L^2}^2.
\end{align*}
Thus, any $u$ in the infimum~\eqref{E:def_cap} yields a function $f$ that belongs to the family on the RHS of~\eqref{E:equiv_cap_Ziemer}, whence
\begin{equation*}
    \mathrm{cap}(A)\geq  \inf\{\|f\|_{L^2}^2\colon R_{1/2,1}f\geq1\ \mu\text{-a.e. on }A,\ f\geq 0\},
\end{equation*}
proving the equality~\eqref{E:equiv_cap_Ziemer}.
\end{proof}

With the help of the previous lemma we now obtain the smoothness of the measure $\nu^\gamma$.

\begin{lemma}%\label{L:smooth_measure}
    Let $|\gamma|< \sqrt{d_H/C_0}$. Then, $\P$-a.s. the LQG measure $\nu^\gamma$ charges no $\mathcal E_1$-polar sets.
\end{lemma}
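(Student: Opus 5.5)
Fix a set $E\subseteq K$ with $\mathrm{cap}(E)=0$. By definition of capacity there are open sets $A_n\supseteq E$ with $\mathrm{cap}(A_n)\to 0$, and we may assume them decreasing. Then $\nu^\gamma(E)\le \nu^\gamma(A_n)$ for all $n$. It therefore suffices to show $\E[\nu^\gamma(A)]$, or a suitable power of it, is controlled by a power of $\mathrm{cap}(A)$ uniformly over open $A$. Fatou/monotonicity then gives $\nu^\gamma(E)=0$ $\P$-a.s.

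The tool is Lemma~\ref{lemma: ziemer}: for open $A$ and every finite measure $\nu$,
\[
\nu(A)\le b(A)\,\|R_{1/2,1}(\mathbf{1}_A\nu)\|_{L^2}=\mathrm{cap}(A)^{1/2}\,\|R_{1/2,1}(\mathbf{1}_A\nu)\|_{L^2}.
\]
This holds by scaling: if the norm is finite and nonzero, normalize $\nu$ so that the norm equals one. Apply this pathwise to $\nu=\nu^\gamma$ and bound $\|R_{1/2,1}(\mathbf{1}_A\nu^\gamma)\|_{L^2}\le\|R_{1/2,1}\nu^\gamma\|_{L^2}$, which holds by positivity of the kernel. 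We obtain $\nu^\gamma(A)\le \mathrm{cap}(A)^{1/2}X$, where $X:=\|R_{1/2,1}\nu^\gamma\|_{L^2}$ does not depend on $A$. Hence it suffices to prove $X<\infty$ $\P$-a.s. Then $\nu^\gamma(E)\le \mathrm{cap}(A_n)^{1/2}X\to0$ on a single full-probability event that works simultaneously for all polar $E$.

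To show $\E[X^2]<\infty$, expand using the semigroup property $\int r_{1/2,1}(x,z)r_{1/2,1}(z,y)\,d\mu(z)=r_{1,1}(x,y)$, which follows from $(I-\Delta)^{-1/2}(I-\Delta)^{-1/2}=(I-\Delta)^{-1}$. This gives
\[
X^2=\int_K\int_K r_{1,1}(x,y)\,d\nu^\gamma(x)\,d\nu^\gamma(y).
\]
Next, estimate $r_{1,1}$ via \eqref{E:subG_HKE} and \eqref{E:HKE_large_t}, as in the proof of Proposition~\ref{P:log_correlation_kernel}. The result is $r_{1,1}(x,y)\le C\bigl(1+d(x,y)^{-(d_H-d_W)}\bigr)$, and since $d_W>d_H$ on the gasket, $r_{1,1}$ is in fact bounded. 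This matches the fact that points have positive capacity, since the walk dimension exceeds the Hausdorff dimension. Hence $X^2\le C\,\nu^\gamma(K)^2$. Then $\E[X^2]\le C\,\E[\nu^\gamma(K)^2]<\infty$, by the $L^2$-bound from Proposition~\ref{P:measure_bdd_martingale} passed to the limit in Theorem~\ref{T: LQGM}. Even without boundedness of $r_{1,1}$, a bound $d(x,y)^{-a}$ with $a<d_H-C_0\gamma^2$ would suffice by the preceding corollary.

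The main obstacle is the kernel estimate for $r_{1,1}$ on the diagonal. The computation is $\int_0^1 t^{-d_S/2}e^{-c(d^{d_W}/t)^{1/(d_W-1)}}dt$, and since $d_S/2<1$ this integral converges uniformly in $d$, giving boundedness. If this fails for technical reasons, for instance almost-everywhere issues in the heat kernel bounds, I would fall back on the energy estimate via the previous corollary. A second point requiring care is the justification of Lemma~\ref{lemma: ziemer}'s inequality for general bounded measures on open sets; this is exactly what the definition of $b(A)$ provides.
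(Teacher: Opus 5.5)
Your proof is correct and follows the paper's argument. You use the bound $\nu^\gamma(E)\le\nu^\gamma(A)\le \mathrm{cap}(A)^{1/2}\|R_{1/2,1}\nu^\gamma\|_{L^2}$ from Lemma~\ref{lemma: ziemer} and reduce to finiteness of $\|R_{1/2,1}\nu^\gamma\|_{L^2}^2=\int_K\int_K r_{1,1}\,d\nu^\gamma\,d\nu^\gamma$ via the convolution property and boundedness of the order-one kernel, which is the paper's route. The differences are minor: you bound $r_{1,1}$ directly from \eqref{E:subG_HKE} using $d_S<2$, whereas the paper first compares with $r_{1/2,0}$ and cites boundedness of $r_{1,0}$ from \cite[Proposition 2.8]{BC23}; and you invoke $\E[\nu^\gamma(K)^2]<\infty$, where a.s.\ finiteness of $\nu^\gamma(K)$, as the paper uses, already suffices.
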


\begin{proof}
%We first note that $\P$-a.s.
%\begin{align}\label{eq: sup}
 %   \sup_{x\in K}R_{1,1}\nu^\gamma(x)%\leq C\nu^\gamma(K)
  %  <\infty.
%\end{align}
%Indeed, since $r_{1,1}\leq r_{1,0}$ it follows
%\begin{align*}
%R_{1,1}\nu^\gamma(x)\leq \int_K\int_0^\infty r_{1,0}(x,y)	\, d\nu^\gamma(y).
%\end{align*}
%Proposition 2.8 in \cite{BC23} and Corollary \ref{C:expected_measure} imply \eqref{eq: sup}.
   
First, we show that 
\begin{equation}\label{eq: l2est}
    \|R_{1/2,1}(\nu^\gamma)\|_{L^2}^2<\infty.
\end{equation}
Since $r_{1/2,1}\leq r_{1/2,0}$, the convolution property of these kernels, see~\cite[Corollary 2.11]{BC23}, implies
\begin{align*}
\|R_{1/2,1}(\nu^\gamma)\|_{L^2}^2=&\int_K\int_K\int_K r_{1/2,1}(x,y)r_{1/2,1}(x,z)\, d\nu^\gamma(y)\, d\nu^\gamma(z)\, d\mu(x)\\
\leq &\int_K\int_K\int_K r_{1/2,0}(x,y)r_{1/2,0}(x,z)\, d\nu^\gamma(y)\, d\nu^\gamma(z)\, d\mu(x)\\
=&\int_K\int_K\int_K r_{1,0}(y,z)\, d\nu^\gamma(y)\, d\nu^\gamma(z).
\end{align*}
Further, note that $r_{1,0}(x,y)$ is uniformly bounded in $x,y\in K$ by virtue of~\cite[Proposition 2.8]{BC23} and consequently
the last integral above is indeed $\P$-a.s. finite since $\nu^\gamma(K)<\infty$ $\P$-a.s. by Corollary \ref{C:expected_measure}.

\medskip

 Next, let $\eps>0$ and let $E\subset K$ be a $\mathcal E_1$-polar set. By definition, there exists an open set $A\subset K$ such that $E\subset A$ and $\mathrm{cap}(A)< \eps$.
 By applying the definition of $b(A)$ to $\tilde \nu:=\|R_{1/2,1}(\mathbf{1}_A\nu^\gamma)\|_{L^2}^{-1}1_A\nu^\gamma$ we obtain 
 \begin{equation*}
     \nu^\gamma(E)\leq \nu^\gamma(A)\leq b(A)\|R_{1/2,1}(\mathbf{1}_A\nu^\gamma)\|_{L^2}\qquad \P\text{-a.s.}
 \end{equation*}
 Further, by Lemma~\ref{lemma: ziemer} and since $R_{1/2,1}(\mathbf{1}_A\nu^\gamma)\leq R_{1/2,1}(\nu^\gamma)$
 \begin{equation*}
     \nu^\gamma(E)\leq\mathrm{cap}(A)^{1/2}\|R_{1/2,1}(\nu^\gamma)\|_{L^2}< \|R_{1/2,1}(\nu^\gamma)\|_{L^2}\eps^{1/2}.
 \end{equation*}
Now, the claim follows from~\eqref{eq: l2est} because $\eps>0$ was arbitrary chosen. 
\end{proof}

\begin{corollary}\label{C:nu_smooth}
    Let $|\gamma|< \sqrt{d_H/C_0}$. Then, the LQG measure $\nu^\gamma$ on $K$ is $\P$-a.s. smooth.
\end{corollary}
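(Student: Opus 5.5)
The corollary follows by checking the two conditions in the definition of a smooth measure from \cite[Section 2.2]{FOT11}: $\nu^\gamma$ must be a positive Borel measure that charges no set of $\mathcal E_1$-capacity zero, and it must admit a generalized nest. Both conditions have essentially been established in the preceding results, so the task is to assemble them on a single event of full $\P$-probability.

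\textbf{Step 1 (full-probability event).} By Theorem~\ref{T: LQGM} and Corollary~\ref{C:expected_measure}, there is an event $\Omega_1$ with $\P(\Omega_1)=1$ on which $\nu^\gamma$ is a finite, non-negative Borel measure on $K$. By the previous lemma, there is an event $\Omega_2$ with $\P(\Omega_2)=1$ on which $\nu^\gamma$ charges no $\mathcal E_1$-polar sets. We work on $\Omega_1\cap\Omega_2$, which still has probability one.

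Some care is needed with the order of quantifiers in the lemma, since there are uncountably many polar sets. The proof of that lemma only uses the single $\P$-a.s. bound $\|R_{1/2,1}(\nu^\gamma)\|_{L^2}<\infty$ from~\eqref{eq: l2est}. On the event where this bound holds, the estimate $\nu^\gamma(E)\le \mathrm{cap}(A)^{1/2}\|R_{1/2,1}(\nu^\gamma)\|_{L^2}$ is valid simultaneously for every polar $E$ and every open $A\supset E$. Taking $\Omega_2$ to be this event therefore makes the exceptional null set independent of $E$.

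\textbf{Step 2 (generalized nest).} On $\Omega_1$, take the constant sequence $F_n:=K$ for all $n$. Each $F_n$ is closed because $K$ is compact, and $\nu^\gamma(F_n)=\nu^\gamma(K)<\infty$. For any compact $C\subseteq K$ we have $C\setminus F_n=\emptyset$, so $\mathrm{cap}(C\setminus F_n)=0$, using $u=0$ as a competitor in~\eqref{E:def_cap}. Hence $\{F_n\}$ is a generalized nest associated with $\nu^\gamma$.

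\textbf{Step 3 (conclusion).} Combining Steps 1 and 2, on $\Omega_1\cap\Omega_2$ the measure $\nu^\gamma$ is a positive Borel measure that charges no $\mathcal E_1$-polar sets and admits a generalized nest. It is therefore smooth in the sense of \cite{FOT11}.

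The only potential obstacle is the uniformity of the null set in Step 1, and it is resolved by the observation above that the previous lemma reduces to one almost-sure quantity.
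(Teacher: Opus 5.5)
Your proof is correct and follows the paper's route: the generalized nest comes from compactness of $K$ together with $\nu^\gamma(K)<\infty$ $\P$-a.s.\ (Corollary~\ref{C:expected_measure}), and the fact that $\nu^\gamma$ charges no $\mathcal E_1$-polar sets comes from the preceding lemma. Your observation that the exceptional null set depends only on the single event $\{\|R_{1/2,1}(\nu^\gamma)\|_{L^2}<\infty\}$, and hence not on $E$, is a useful clarification that the paper leaves implicit.
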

\subsection{LBM on the Sierpinski gasket}%\label{SS:def_LBM_SG}
The one-to-one correspondence between smooth measures and PCAFs, c.f.~\cite[Theorem 5.1.4]{FOT11}, guarantees the existence up to equivalence of PCAFs of the time changed process $\{B_t^\gamma\}_{t\geq 0}$ on $K$ given by
%\pat{(check this def from FOT p.406 vs $B_{A_t^\gamma}$)} --- OK this is the one to take :)
\begin{equation}\label{E:def_LBM_SG}
        B_t^\gamma:=B_{\tau_t^\gamma},\qquad\text{where}\quad\tau_t^\gamma:=\inf\{s>0\colon A_t^\gamma>s\}.
\end{equation}
The main result of this section collects relevant features of this process, which we shall call the \emph{Liouville Brownian Motion} (LBM) on $K$. In terms of notation, $(\mathcal{E},\mathcal{F}_e)$ below refers to the the \emph{extended Dirichlet space} of the Dirichlet form $(\mathcal{E},\mathcal{F})$, c.f.~\cite[p.41]{FOT11}.
%%%--------
\begin{theorem}\label{T:def_LBM_SG}
    Let $|\gamma|<\sqrt{d_H/C_0}$. Then, $\mathbb{P}$-a.s. it holds that:
    \begin{enumerate}[wide=0em,itemsep=.25em,label={\rm(\roman*)}]
        \item There exists a unique PCAF $\{A_t^\gamma\}_{t\geq 0}$ with Revuz measure $\nu^\gamma$.
        \item The time-changed process $\{B_t^\gamma\}_{t\geq 0}$ given by~\eqref{E:def_LBM_SG} is a $\nu^\gamma$-symmetric Hunt process with associated Dirichlet form
        \begin{equation}\label{E:time_changed_DF}
            \mathcal{E}^\gamma(u,v)=\mathcal{E}(u,v),\qquad\mathcal{F}^\gamma=\mathcal{F}_e\cap L^2(K,\nu^\gamma).
        \end{equation}       
    \end{enumerate}
\end{theorem}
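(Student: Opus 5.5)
The proof is a direct application of the general time-change theory for regular Dirichlet forms in~\cite{FOT11}, carried out pathwise in the randomness of $h$. We fix $\omega$ in the full-probability event where Corollary~\ref{C:nu_smooth} holds, so that $\nu^\gamma=\nu^\gamma(\omega)$ is a smooth measure for $(\mathcal{E},\mathcal{F})$. By Corollary~\ref{C:expected_measure} we may also assume that $\nu^\gamma$ is finite and non-trivial, and by Corollary~\ref{C: support} that it has full support. Everything below is then deterministic, for this fixed $\omega$. Since $B$ is built on its own probability space independent of $h$, the intersection of these countably many full-measure events still has $\mathbb{P}$-probability one.

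For (i), the form $(\mathcal{E},\mathcal{F})$ is a regular Dirichlet form on $L^2(K,\mu)$, and $\{B_t\}_{t\geq 0}$ is its associated Hunt process. Because $\nu^\gamma$ is smooth, the Revuz correspondence~\cite[Theorem 5.1.4]{FOT11} gives a PCAF $\{A_t^\gamma\}_{t\geq 0}$ with Revuz measure $\nu^\gamma$. It is unique up to equivalence of additive functionals, which is the uniqueness claimed in the statement. If one wants the explicit Revuz identity with the heat kernel displayed before the theorem, it follows from the characterization~\cite[Theorem 5.1.3]{FOT11}, since $(\mathcal{E},\mathcal{F})$ admits the continuous heat kernel $p_t$ satisfying~\eqref{E:subG_HKE}.

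For (ii), we apply the time-change theorem~\cite[Theorem 6.2.1]{FOT11}, together with the identification of the support of $A^\gamma$ with the quasi-support of $\nu^\gamma$ in~\cite[Section 6.2]{FOT11}. This yields that $B_{\tau^\gamma_t}$, restricted to the fine support $\tilde F$ of $A^\gamma$, is a $\nu^\gamma$-symmetric right process whose Dirichlet form on $L^2(\tilde F,\nu^\gamma)$ is the trace form $\check{\mathcal{E}}$. The form $\check{\mathcal{E}}$ is given through the hitting distribution on $\tilde F$, and it coincides with $\mathcal{E}$ on the functions in $\mathcal{F}_e$ that are harmonic off $\tilde F$. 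The key step is to show $\tilde F=K$ up to a polar set, since then the harmonic extension off $\tilde F$ is trivial. This gives $\check{\mathcal{E}}(u,v)=\mathcal{E}(u,v)$ with domain $\mathcal{F}_e\cap L^2(K,\nu^\gamma)$, which is~\eqref{E:time_changed_DF}. To prove $\tilde F=K$, we use that points have positive capacity on the gasket ($d_S<2$, so $\mathcal{F}\subset C(K)$). Hence quasi-closed sets are closed, and the quasi-support of $\nu^\gamma$ equals its topological support, which is $K$ by Corollary~\ref{C: support}.

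Finally, the Hunt property (not merely right process) and regularity of $(\mathcal{E}^\gamma,\mathcal{F}^\gamma)$ on $L^2(K,\nu^\gamma)$ follow from~\cite[Theorem 6.2.1]{FOT11}. That theorem states that the time-changed form is regular on the support of the measure whenever that support is the whole space, and then its process is a Hunt process. The main obstacle is the support identification $\tilde F=K$, i.e.\ that $\nu^\gamma$ has no quasi-support strictly smaller than $K$. Here we rely on the positivity of point capacity combined with Corollary~\ref{C: support}. If that argument needed refinement, we would instead check directly that every nonempty open set has positive $\nu^\gamma$-mass, which is exactly Corollary~\ref{C: support}, and appeal to~\cite[Theorem 4.6.2]{FOT11} for fine supports.
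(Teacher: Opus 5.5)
Your proposal is correct and follows essentially the same route as the paper. Part (i) comes from the Revuz correspondence \cite[Theorem 5.1.4]{FOT11} via the smoothness in Corollary~\ref{C:nu_smooth}, and part (ii) from the time-change theorem \cite[Theorem 6.2.1 and (6.2.22)]{FOT11} via the full support of $\nu^\gamma$ (Corollary~\ref{C: support}); your extra observation that points have positive $\mathcal{E}_1$-capacity on $K$, so the quasi-support of $\nu^\gamma$ equals its topological support, makes explicit a step the paper leaves implicit.
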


\begin{proof}[Proof of Theorem~\ref{T:def_LBM_SG}]
    \begin{enumerate}[wide=0em,itemsep=.25em,label=(\roman*)]
        \item The existence of the PCAF $\{A_t^\gamma\}_{t\geq 1}$ follows from~\cite[Theorem 5.1.4]{FOT11} and the smoothness of $\nu^\gamma$ from Corollary~\ref{C:nu_smooth}.
        \item By virtue of~\cite[Theorem 6.2.1]{FOT11}, the time-changed process~\eqref{E:def_LBM_SG} is a (symmetric) Hunt process and the associated Dirichlet form takes the expression~\eqref{E:time_changed_DF} by virtue of~\cite[(6.2.22)]{FOT11} since $\nu^\gamma$ has full support $\mathbb{P}$-a.s. by Corollary~\ref{C:nu_smooth}.
    \end{enumerate}
\end{proof}

Finally, we mention that, if additionally $\rho$ has bounded density with respect to $\mu$, an adaptation of the arguments in the proof of \cite[Theorem 5.7]{dSHKS24} shall lead to the approximation
    \begin{equation*}
     A_t^\gamma=\lim_{\ell\to\infty}\int_0^te^{\gamma h_{\ell}(B_s)-\frac{\gamma^2}{2}k_{d_S,\ell}(B_s,B_s)}ds
    \end{equation*}
$\Q_\rho\otimes\P$-a.s. for all $t\geq0$.
%\end{proposition}
In particular, the PCAFs associated to $\nu^\gamma_\ell$ will converge to the PCAF associated to $\nu^\gamma$.
%%%%%------------------------------------------------------
%\newpage
\bibliographystyle{amsplain}
\bibliography{LQG_SG_refs}
\end{document}